\newtheorem{theorem}{Theorem}
\newtheorem{lemma}[theorem]{Lemma}
\newtheorem*{cor}{Corollary}
\begin{document}

\title{$S^2$-bundles over 2-orbifolds}

\author{Jonathan A. Hillman }
\address{School of Mathematics and Statistics\\
     University of Sydney, NSW 2006\\
      Australia }

\email{jonh@maths.usyd.edu.au}

\begin{abstract}
Let $M$ be a closed 4-manifold with $\pi_2(M)\cong{Z}$.
Then $M$ is homotopy equivalent to either $CP^2$,
or the total space of an orbifold bundle 
with general fibre $S^2$ over an aspherical 2-orbifold $B$,
or the total space of an $RP^2$-bundle over an aspherical surface.
If $\pi=\pi_1(M)\not=1$ there are at most two such bundle spaces 
with given action $u:\pi\to{Aut}(\pi_2(M))$.
The bundle space has the geometry $\mathbb{S}^2\times\mathbb{E}^2$
(if $\chi(M)=0$) or $\mathbb{S}^2\times\mathbb{H}^2$ (if $\chi(M)<0$)
except when $B$ is orientable and $\pi$ is generated by involutions,
in which case the action is unique and there 
is one non-geometric orbifold bundle.
\end{abstract}

\keywords{geometry, 4-manifold, orbifold, $S^2$-bundle}

\subjclass{57N13}

\maketitle

Every closed 4-manifold with geometry $\mathbb{S}^2\times\mathbb{E}^2$ 
or $\mathbb{S}^2\times\mathbb{H}^2$ has a foliation 
with regular leaves $S^2$ or $RP^2$.
The leaf space of such a foliation may be regarded as a compact 2-orbifold,
and the projection to the leaf space is an orbifold bundle projection.
If the regular leaves are $S^2$ the singularities of this orbifold
are cone points of order 2 or reflector curves.
If the regular leaves are $RP^2$ there are no exceptional leaves,
and the projection is an $RP^2$-bundle over a surface.
The total space of an $S^2$-or $RP^2$-bundle over an aspherical surface
is geometric, by Theorems 10.8 and 10.9 of \cite{Hi}.
In this paper we shall show that every closed 4-manifold $M$ 
with universal cover $\widetilde{M}\simeq{S^2}$ is homotopy equivalent 
to a manifold admitting an orbifold bundle structure, 
and in ``most" cases the bundle space is geometric.
Our main concern here is with the case of $S^2$-orbifold bundle spaces,
as the cases with no exceptional leaves are well understood.

Each pair $(\pi,u)$ where $\pi=\pi^{orb}(B)$ is a 2-orbifold group
and $u:\pi\to{Z/2Z}$ is an epimorphism with torsion-free kernel is
realized by an $\mathbb{S}^2\times\mathbb{E}^2$- or
$\mathbb{S}^2\times\mathbb{H}^2$-manifold $M$.
In \S1 we give a ``standard" example $M_{st}$, 
and review some of its algebraic invariants. 
In \S2 we consider local models for orbifold bundle projections,
and in \S3 we show that there are at most two 4-manifolds $M$ which 
are total spaces of orbifold bundles over $B$ with regular fibre $S^2$
and action $u$ on $\pi_2(M)\cong{Z}$.
(The two manifolds differ at most by ``Gluck reconstruction" of
a product neighbourhood of a regular fibre.)
The base orbifold $B$ must have a nonsingular double cover.
In particular, its singular locus consists of cone points 
of order 2 and reflector curves.
If $B$ has a reflector curve, the bundle is unique.
We show also that if $B$ is an $\mathbb{H}^2$-orbifold then every such bundle
space is either geometric or has a decomposition into two geometric pieces.
In \S4 we review briefly the cases with spherical base orbifold.

We return to the homotopy classification in \S5,
where we show that if $M$ is any 4-manifold realizing $(\pi,u)$ 
then $k_1(M)=k_1(M_{st})$.
In \S6 we construct an explicit model for the second stage 
$P$ of the Postnikov tower for $M_{st}$,
and show that ``Gluck reconstruction" changes the
image of $[M]$ in $H_4(P;\mathbb{F}_2)$.
Our main result is Theorem 12, in \S7,
where we show that if $\pi\not=1$, $\pi_2(M)\cong{Z}$ and
$\pi\not\cong\mathrm{Ker}(u)\times{Z/2Z}$ 
then $M$ is homotopy equivalent to an $S^2$-orbifold bundle space.
(The cases with $\pi=1$ or $\pi\cong\mathrm{Ker}(u)\times{Z/2Z}$ 
were already known.)
Our argument derives ultimately from \cite{HK}.
If the base orbifold $B$ has reflector curves there is 
an unique homotopy type realizing the pair $(\pi,u)$,
and this is represented by a geometric 4-manifold.
If $B$ has only cone point singularities there are two homotopy types,
and if $\pi$ is not generated by involutions both
homotopy types are represented by geometric 4-manifolds.
However, if $B$ is the orbifold quotient of an orientable surface 
by the hyperelliptic involution only one of these is geometric.
The second Wu class $v_2(M)$ is an essential invariant for
$S^2$- and $RP^2$-bundles. 
However in \S8 we show that if $M$ is an $S^2$-orbifold bundle space 
and $B$ has singularities then $v_2(M)$ is determined by $\pi$.

In the final three sections we show first that the 22 
$\mathbb{S}^2\times\mathbb{E}^2$-manifolds have distinct homotopy types,
and there is one more homotopy type represented by a non-geometric
$S^2$-orbifold bundle over $S(2,2,2,2)$.
The TOP structure sets of such manifolds are infinite if 
$\pi$ has torsion but is not a product with $Z/2Z$.
If moreover $\pi/\pi'$ is finite then there are infinitely many
homeomorphism types within each such homotopy type.
Finally we apply the main result to a characterization of the
homotopy types  of orientable 4-manifolds which are total spaces of bundles 
over $RP^2$ with aspherical fibre and a section.

I would like to thank Wolfgang L\"uck, for computing the surgery obstruction
groups $L_*(\pi,w)$ for the $\mathbb{E}^2$-orbifold groups
(for all orientation characters) at my request \cite{Lu10},
and Elmar Vogt, for sending me a scanned copy of the final chapter 
of his dissertation \cite{Vo70},
in which he classified $S^2$-orbifold bundles over
orbifolds with no reflector curves.

\section{the standard example}

Although we shall consider quotients of $S^2\times{S^2}$ briefly in \S3,
our main concern is with 4-manifolds $M$ covered by $S^2\times{R}^2$.
We shall identify $S^2$ with $CP^1=\mathbb{C}\cup\{\infty\}$,
via stereographic projection from 
$(0,1)\in{S^2}\subset\mathbb{C}\times\mathbb{R}$.
Under this identification the antipodal map $a$ is given by
$a(z)=-z/|z|^2$ (i.e, $a([z_0:z_1])=[-\overline{z_1}:\overline{z_0}]$),
and rotation through an angle $\theta$ about the axis through $0$ and 
$\infty$ is given by $R_\theta(z)=e^{i\theta}z$.
(Care! Multiplication by $-1$ in $CP^1$ is $R_\pi$, 
not $a$!) 
We shall identify the groups $\mathbb{Z}^\times=\{\pm1\}$, 
$Z/2Z$ and $\mathbb{F}_2$, where appropriate.

Let $M$ be a closed 4-manifold with $\pi_2(M)\cong{Z}$
and $\pi=\pi_1(M)\not=1$, 
and let $u:\pi\to{Aut(\pi_2(M))}=\mathbb{Z}^\times$ be the natural action.
Let $U\in{H^1(\pi;\mathbb{F}_2)}=Hom(\pi,Z/2Z)$ be 
the cohomology class corresponding to the epimorphism $u$.
Then $M$ has universal cover $\widetilde{M}\cong{S^2}\times{R^2}$ and
$\kappa=\mathrm{Ker}(u)$ is a $PD_2$-group,
and $w=w_1(M)$ is determined by the pair $(\pi,u)$.
In particular, $w_1(M)|_\kappa=w_1(\kappa)$, 
since $\kappa$ acts trivially on $\pi_2(M)$.
(See Chapter 10 of \cite{Hi}.
Note that if $u$ is nontrivial $\pi$ may have automorphisms
that do not preserve $u$.)
Let $[M]\in{H_4(M;Z^w)}\cong{Z}$ be a fundamental class.

If $\pi$ is torsion-free then $M$ is TOP $s$-cobordant 
to the total space of an $S^2$-bundle over an aspherical surface.
If $\pi\cong\kappa\times{Z/2Z}$ then any 4-manifold $M$ with 
$\pi_1(M)\cong\pi$ and $\pi_2(M)\cong{Z^u}$ is simple homotopy equivalent 
to the total space of an $RP^2$-bundle over $K(\kappa,1)$.
For each $PD_2$-group $\kappa$ there are two such bundles, 
distinguished by whether $v_2(M)=0$ or not.
As these cases are well-understood, we shall
usually assume that $M$ is not homotopy equivalent to a bundle space.

If $\pi$ has torsion but is not a direct product
then $u$ is nontrivial and $\pi\cong\kappa\rtimes{Z/2Z}$.
Moreover $\pi$ is the orbifold fundamental group of a $\mathbb{E}^2$-
or $\mathbb{H}^2$-orbifold $B$.
Since $\kappa$ is torsion free the singular locus $\Sigma{B}$ 
consists of cone points of order 2 and reflector curves.

The surface $K(\kappa,1)$ has an involution $\zeta$ 
corresponding to the action of $\pi/\kappa\cong{Z/2Z}$.
The ``standard" example of a closed 4-manifold realizing $(\pi,u)$ is 
\[M_{st}=S^2\times{K(\kappa,1)}/(s,k)\sim(-s,\zeta(k)).\]
This is a $\mathbb{S}^2\times\mathbb{E}^2$-manifold if $\chi(\pi)=0$,
and is a $\mathbb{S}^2\times\mathbb{H}^2$-manifold otherwise.
Projection onto the first factor induces a bundle projection
from $M_{st}$ to $RP^2$, with fibre $F=K(\kappa,1)$.
In particular, $U^3=0$, since $U$ is induced from the generator of
$H^1(RP^2;\mathbb{F}_2)$.
Projection onto the second factor induces an orbifold 
bundle projection $p_{st}:M_{st}\to{B}$ with regular fibre $F\cong{S^2}$.

The algebraic 2-type $[\pi,\pi_2(M),k_1(M)]$ determines $P_2(M)$, 
the second stage of the Postnikov tower for $M$, 
and the homotopy type of $M$ is determined by the
image of $[M]$ in $H_4(P_2(M);Z^w)$,
modulo the action of $Aut(P_2(M))$.
There are at most two possible values for this image, 
up to sign and automorphisms of the algebraic 2-type,
by Theorem 10.6 of \cite{Hi}.
It is clear from this Theorem that the homotopy type of $M$ 
is in fact detected by the image of $[M]$ in $H_4(P;\mathbb{F}_2)$.
We shall construct a model for $P_2(M_{st})$ in \S6.

\section{local models for orbifold bundles}

A cone point of order 2 in a 2-orbifold has a regular neighbourhood
which is orbifold-homeomorphic to $D(2)=D^2/d\sim-d$.
Let $\mathbb{J}=[[0,1]=[-1,1]/x\sim-x$ be the compact connected 1-orbifold 
with one reflector point.
A reflector curve (with no corner points) in a 2-orbifold 
has a regular neighbourhood which is orbifold-homeomorphic 
to $\mathbb{J}\times{S^1}$.
However there are two possible surjections 
$u:{\pi^{orb}(\mathbb{J}\times{S^1})}\to{Z/2Z}$ with torsion-free kernel.
We shall say that the curve is {\it $u$-twisted}
if the cover is the M\"obius band $Mb={[-1,1]\times{S^1}/(x,u)\sim(-x,-u)}$ 
with the involution $[x,u]\mapsto[-x,u]=[x,-u]$;
if the cover is $[-1,1]\times{S^1}$ with involution $(x,u)\mapsto(-x,u)$
we shall say that the curve is untwisted.
(Note that this notion involves both the reflector curve and the action.)

For example, as the quotient of an involution of the torus $T$
the ``silvered annulus" $\mathbb{A}=S^1\times{S^1}/(u,v)\sim(u,\bar{v})$ 
has two untwisted reflector curves.
However it is also the quotient of an involution of the Klein bottle $Kb$,
and the reflector curves are then both twisted.
On the other hand, the ``silvered M\"obius band" 
$\mathbb{M}b={S^1\times{S^1}/(u,v)\sim(v,u)}$
has two distinct (but isomorphic) nonsingular covers, 
but in both cases the reflector curve is untwisted.

Models for regular neighbourhoods of the exceptional fibres
of such orbifold bundles may be constructed as follows.
Let 
\[E(2)=S^2\times{D^2}/(z,w)\sim(a(z),-w),\]
\[\mathbb{E}=S^2\times[-1,1]\times{S^1}/(z,x,u)\sim(a(z),-x,u)\]
and 
\[\mathbb{E}'=S^2\times[-1,1]\times{S^1}/(z,x,u)\sim(a(z),-x,u)\sim(z,-x,-u).\]
Then $p_2([z,w])=[w]$, $p_\mathbb{E}([z,x,u])=[u,x]$
and $p_{\mathbb{E}'}([z,x,u])=[x,u]$
define bundle projections $p_2:E(2)\to{D(2)}$,
$p_\mathbb{E}:\mathbb{E}\to{\mathbb{J}\times{S^1}}$
(with untwisted reflector curve) 
and $p_{\mathbb{E}'}:\mathbb{E}'\to{\mathbb{J}}\times{S^1}$
(with twisted reflector curve).
Any $S^2$-bundle over $\mathbb{J}\times{S^1}$ or $D(2)$ 
with nonsingular total space must be of this form.
The other local models for nontrivial actions on the fibre have base 
$Mb$ and total space $S^2\times{Mb}$ (non-orientable) or
$S^2\times[-1,1]\times[0,1]/(z,t,0)\sim(a(z),-t,1)$ (orientable).

It is also convenient to let
$D(2,2)=[-1,1]\times{S^1}/(x,u)\sim(-x,\bar{u})$
be the disc with two cone points of order 2 and
\[E(2,2)=S^2\times[-1,1]\times{S^1}/(z,x,u)\sim(a(z),-x,\bar{u}),\]
with projection $p_{2,2}([z,x,u])=[x,u]$.
Then $D(2,2)$ is the boundary-connected-sum of two copies of $D(2)$,
and $E(2,2)$ is the corresponding fibre sum of two copies of $E(2)$.

The manifolds $E(2)$ and $\mathbb{E}'$ have boundary $S^2\tilde\times{S^1}$, 
and $p_2|_{\partial{E(2)}}$ and $p|_{\partial\mathbb{E}'}$
are nontrivial $S^2$-bundles over $S^1$.
In all the other cases the restriction of the fibration over
the boundary of the base orbifold is trivial.
(When the base is $B=Mb$ or $D(2,2)$ this can be seen by
noting that $\partial{B}$ is homotopic to the product of two generators
of $\pi_1^{orb}(B)$, and considering the action on $\pi_2(E)\cong{Z}$.)
For later uses we may need to choose homeomorphisms
$\partial{E}\cong{S^2}\times{S^1}$.

Let $\alpha,\beta$ and $\tau$ be the self-homeomorphisms 
of $S^2\times{S^1}$ defined by $\alpha(z,u)=(a(z),u)$, 
$\beta(z,u)=(z,\bar{u})$ and
$\tau(z,u)=(uz,u)$, for all $(z,u)\in{S^2\times{S^1}}$.
The images of $\alpha,\beta$ and $\tau$ generate
$\pi_0(Homeo(S^2\times{S^1}))\cong(Z/2Z)^3$.
The group $\pi_0(Homeo(S^2\tilde\times{S^1}))\cong(Z/2Z)^2$
is generated by the involution $\tilde\beta([z,u])=[z,\bar{u}]$
and the twist $\xi([z,u])=[uz,u]$.

\begin{lemma}
\begin{enumerate}
\item The self-homeomorphisms $\alpha$ and $\beta$ of $S^1\times{S^2}$ 
extend to fibre-preserving self-homeomorphisms of $S^2\times{D^2}$ and $E(2,2)$.

\item Every self-homeomorphism of $S^1\times{S^2}$ extends 
to a fibre-preserving self-homeomorphism of $\mathbb{E}$.

\item The self-homeomorphism $\tilde\beta$ of $S^2\tilde\times{S^1}$
extends to fibre-preserving self-homeomorphisms of $E(2)$
and $\mathbb{E}'$.
\end{enumerate}
\end{lemma}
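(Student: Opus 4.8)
The plan is to extend each boundary homeomorphism by applying the same coordinate formula to the interior of the model, and in each case to check three things: that the formula respects the equivalence relation(s) defining the total space, that its restriction to the boundary is the prescribed map, and that it covers a self-homeomorphism of the base orbifold. Since $a^2=\mathrm{id}$ and $\overline{\bar u}=u$, the verifications for the three involutions occurring in parts (1) and (3) are immediate once the formulas are written down, and only part (2) requires a genuine computation.

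For part (1) I would take $\hat\alpha([z,x,u])=[a(z),x,u]$ and $\hat\beta([z,x,u])=[z,x,\bar u]$ on $E(2,2)$, with the evident analogous formulas on $S^2\times D^2$; each is compatible with $(z,x,u)\sim(a(z),-x,\bar u)$, restricts on the boundary slice $x=1$ to $\alpha$ and $\beta$, and covers respectively the identity and the conjugation $u\mapsto\bar u$ of the base. For part (3) I would take $[z,w]\mapsto[z,\bar w]$ on $E(2)$ and $[z,x,u]\mapsto[z,x,\bar u]$ on $\mathbb{E}'$, checking compatibility with $(z,w)\sim(a(z),-w)$ and with both relations $(z,x,u)\sim(a(z),-x,u)\sim(z,-x,-u)$ respectively; on the boundary each recovers $\tilde\beta$ and covers conjugation of the base.

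The one substantive computation is for the twist in part (2). I would set $\hat\tau([z,x,u])=[uz,x,u]$ on $\mathbb{E}$; here well-definedness with respect to $(z,x,u)\sim(a(z),-x,u)$ reduces exactly to the equivariance $a(uz)=u\,a(z)$, that is, to the fact that the antipodal map commutes with the rotations $R_\theta$. This is the only place the precise form of $a$ enters, and it is worth noting that the same formula fails on $E(2,2)$, whose relation carries $\bar u$ rather than $u$, which is why part (1) omits $\tau$. Together with $\hat\alpha$ and $\hat\beta$ (the same formulas as above, now covering the identity and conjugation of $\mathbb{J}\times S^1$), this handles the three classes generating $\pi_0(Homeo(S^2\times S^1))\cong(Z/2Z)^3$.

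I expect the main obstacle to be the passage from these generators to the assertion that \emph{every} self-homeomorphism extends. The clean route is to observe that $\alpha,\beta,\tau$ are fibre-preserving and generate the mapping class group, so any self-homeomorphism is isotopic to a word in them and hence to one extending fibre-preservingly; since $\mathbb{E}$ is a product $S^2\times S^1\times[0,\epsilon]$ near its boundary, the residual isotopy can be absorbed in a collar. The point requiring care is to carry out this collar absorption so that the resulting extension still projects to a self-homeomorphism of the base, keeping the whole map fibre-preserving.
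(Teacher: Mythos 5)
Your proposal is correct and takes essentially the same approach as the paper: the paper's entire proof is the observation that it suffices to check that the listed representatives of the isotopy classes extend (``which in each case is clear''), and your explicit formulas $\hat\alpha$, $\hat\beta$, $\hat\tau$ together with the well-definedness checks against the gluing relations are exactly those verifications, while your generators-plus-collar-absorption reduction in part (2) is the implicit content of the paper's ``it is sufficient'' step. The only point you flag as delicate --- keeping the collar absorption fibre-preserving --- is glossed over equally by the paper's own proof, so your write-up is if anything more complete.
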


\begin{proof}
It is sufficient to check that the above representatives of 
the isotopy classes extend, which in each case is clear. 
\end{proof}

However $\tau$ does not extend across $S^2\times{D^2}$ or $E(2,2)$,
as we shall see.
Nor does $\xi$ extend across $E(2)$ or $\mathbb{E}'$.

\section{general results on orbifold bundles}

Let $M$ be a closed 4-manifold which is the total space of
an orbifold bundle $p:M\to{B}$ with regular fibre $F\cong{S^2}$ 
over the 2-orbifold $B$. 
Then $\pi_1^{orb}(B)\cong\pi_1(M)$.
Let $\Sigma{B}$ be the singular locus of $B$.
For brevity, we shall say that $M$ is an {\it $S^2$-orbifold bundle space}
and $p$ is an {\it $S^2$-orbifold bundle}.

\begin{lemma}
The singular locus $\Sigma{B}$ consists of cone points of
order $2$ and reflector curves (with no corner points).
The number of cone points plus the number of $u$-twisted reflector curves
is even.
In particular, the base orbifold must be good.
There is a cone point if and only if $\pi=\pi_1^{orb}(B)$
has an element $x$ of order $2$ such that $w(x)\not=0$,
and there is a reflector curve if and only if $\pi$ has
an element $x$ of order $2$ such that $w(x)=0$.
\end{lemma}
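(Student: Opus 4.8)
The plan is to read the allowed singularity types off the torsion of $\pi$, to separate cone points from reflector curves by means of the orientation character $w$, and to obtain the parity statement by evaluating $U$ on the fundamental relation of the orbifold group. First I would pin down the torsion. Since $\kappa=\mathrm{Ker}(u)$ is a $PD_2$-group it is torsion-free, so any $x\in\pi$ of finite order $>1$ lies outside $\kappa$ and satisfies $u(x)=-1$; then $x^2\in\kappa$ is torsion, hence trivial, so $x$ has order exactly $2$. This already forces every cone point to have order $2$, and it excludes corner points: at a corner reflector of order $n$ the two mirror generators $r_1,r_2$ have order $2$, so $u(r_1)=u(r_2)=-1$ and the rotation $r_1r_2$ lies in $\kappa$; but $r_1r_2$ has order $n\ge2$, contradicting the torsion-freeness of $\kappa$. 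Hence $\Sigma B$ consists only of order-$2$ cone points and reflector curves without corners, and by the structure of $2$-orbifold (NEC) groups every order-$2$ element of $\pi$ is conjugate to a cone-point generator or to a reflector generator.

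Next I would distinguish the two kinds of order-$2$ element using $w=w_1(M)$, which is a conjugation invariant, together with the local models of \S2. For a cone point the generator acts on the total space as the deck transformation of $E(2)$, namely by $a$ on $S^2$ and by rotation through $\pi$ on $D^2$; since $a$ reverses the orientation of $S^2$ while the rotation preserves that of $D^2$, this is orientation-reversing, so $w(x)\neq0$. For a reflector curve the generator acts as the deck transformation of $\mathbb{E}$ (or $\mathbb{E}'$), by $a$ on $S^2$ and by $x\mapsto-x$ on the $[-1,1]$-factor, which reverses both orientations and so is orientation-preserving, giving $w(x)=0$. As every order-$2$ element already satisfies $u(x)=-1$, this yields both biconditionals: such an element with $w(x)\neq0$ must be a cone generator, and one with $w(x)=0$ must be a reflector generator.

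For the parity I would evaluate $U\in H^1(\pi;\mathbb{F}_2)$ on the surface relation. Writing $B$ with underlying compact surface carrying genus generators, cone loops $x_1,\dots,x_c$ and reflector-boundary loops $t_1,\dots,t_b$, the defining relation has the form $\bigl(\prod_i[a_i,b_i]\bigr)\,\prod_k x_k\,\prod_j t_j=1$ (with $\prod_i a_i^2$ in the non-orientable case). Applying the homomorphism $U$, the genus contribution dies because $U$ sends commutators and squares to $0$; moreover $U(x_k)=1$ for every cone point, while inspection of the covers $[-1,1]\times S^1\to\mathbb{J}\times S^1$ and $Mb\to\mathbb{J}\times S^1$ shows that the boundary loop $t_j$ fails to lift exactly when the $j$-th reflector is $u$-twisted, i.e. $U(t_j)=1$ iff that reflector is twisted. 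Hence $c+\#\{\,u\text{-twisted reflectors}\,\}\equiv0\pmod2$. Goodness is then automatic: the index-$2$ subgroup $\kappa$ is torsion-free, so the corresponding double cover of $B$ is a closed surface, and an orbifold with a manifold cover is good.

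I would expect the parity step to be the main obstacle, and specifically the clean identification of $U(t_j)=1$ with $u$-twistedness, together with the verification that the stated relation is the correct presentation in the mirror-boundary case. The orientation bookkeeping in the local models is routine but must be carried out consistently, and as a sanity check one should confirm that the twist criterion is compatible with the examples $\mathbb{A}$ and $\mathbb{M}b$ of \S2, where the relation already predicts the observed number of twisted curves (forcing both reflectors of $\mathbb{A}$ to have the same type, and the reflector of $\mathbb{M}b$ to be untwisted).
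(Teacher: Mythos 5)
Your overall architecture (read the singularity types off torsion in $\pi$, separate cone points from reflectors via $w$, get parity by evaluating $U$ on a relation) parallels the paper's, and your orientation bookkeeping and your evaluation of $U$ on the long relation are essentially the paper's arguments in presentation-theoretic dress. But the foundation you build everything on --- ``$\kappa=\mathrm{Ker}(u)$ is a $PD_2$-group, hence torsion-free, so every torsion element of $\pi$ has order $2$ and lies outside $\kappa$'' --- is not available in the generality of this lemma, and is in fact false in part of it. The lemma must cover spherical bases (it is invoked in \S4 precisely to list them), and there $\kappa$ can have torsion: for the bundle over $\mathbb{D}(2)$, with total space $S^2\times{S^2}/(x,y)\sim(x,-y)\sim(-x,R_\pi(y))$, the deck transformation $(x,y)\mapsto(x,-y)$ is an order-$2$ element acting trivially on the fibre, so $\kappa\cong{Z/2Z}$ and your very first deduction fails, as does the claim that every order-$2$ element has $u(x)=-1$. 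Moreover $\pi_2(M)\cong{Z^2}$ in that case, so the \S1 machinery you are importing (which presupposes $\pi_2(M)\cong{Z}$) does not apply; even over aspherical bases it presupposes structural facts about $B$ that this lemma is in the middle of establishing.

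There is a second, related circularity: your arguments that cone points have order $2$, that corner points cannot occur, and that $B$ is good all reason inside the abstract group $\pi^{orb}(B)$, tacitly assuming that the local groups inject into $\pi^{orb}(B)$ and that subgroups of $\pi^{orb}(B)$ correspond to orbifold covers with the expected local structure. That is exactly what fails for bad orbifolds: the cone generator of $S(2)$ is \emph{trivial} in $\pi^{orb}(S(2))=1$, so torsion-freeness of $\kappa$ cannot detect that singular point, and the ``cover corresponding to $\kappa$'' of a bad orbifold need not be a manifold. Since ruling out $S(2)$ is part of the conclusion, you may not assume this correspondence. The paper avoids all of this with one local observation that your proposal is missing: for the total space to be a manifold, the stabilizer of each point of $B$ must act \emph{freely on the regular fibre} $S^2$, and the only nontrivial finite group acting freely on $S^2$ is $Z/2Z$, acting by an orientation-reversing involution (Euler characteristic, then Lefschetz). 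This single fact yields order-$2$ cone points and cornerless reflector curves, supplies $u=-1$ on every stabilizer generator (which your parity step needs), and, combined with the parity count --- which excludes $S(2)$, the only bad orbifold all of whose stabilizers have order at most $2$ --- yields goodness. With that local input in place of your torsion-freeness claim, your second and third paragraphs do recover the paper's proof.
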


\begin{proof}
The first assertion holds since the stabilizer of a point 
in the base orbifold must act freely on the fibre $S^2$.

Let $N$ be a regular neighbourhood of $\Sigma{B}$,
and let $V$ be the restriction of $U$ to $B\setminus{N}$.
Then $V(\partial{N})=0$.
The action $u$ is trivial on boundary components of $N$ 
parallel to untwisted reflector curves, 
but is nontrivial on all other boundary components.
Therefore $V(\partial{N})$ is the sum of the number of cone points
and the number of $u$-twisted reflector curves, modulo $(2)$.
Thus this number must be even, and $B$ cannot be $S(2)$, 
which is the only bad orbifold in which all point stabilizers 
have order at most 2.

The final assertions follow since an involution of a surface
with a fixed point point is either locally a rotation
about an isolated fixed point or locally a reflection across a
fixed arc.
\end{proof}

If $B$ is spherical then $\widetilde{M}\cong{S^2}\times{S^2}$;
otherwise $\widetilde{M}\cong{S^2}\times{R^2}$.

\begin{lemma}
Let $q:E\to{F}$ be an $S^2$-bundle over a surface with nonempty boundary.
If $q$ is nontrivial but $q|_{\partial{E}}$ is trivial then there is a
non-separating simple closed curve $\gamma$
in the interior of $F$ such that the restriction of the bundle over
$F\setminus\gamma$ is trivial.
\end{lemma}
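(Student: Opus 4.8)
The plan is to reduce everything to the first Stiefel--Whitney class of the bundle and then apply Poincar\'e--Lefschetz duality on $F$. Since $F$ has nonempty boundary it is homotopy equivalent to a wedge of circles, so by Smale's theorem $\mathrm{Diff}(S^2)\simeq O(3)$ (equivalently, by Theorems 10.8--10.9 of \cite{Hi} together with $H^2(F;\mathbb{F}_2)=0$) an $S^2$-bundle over $F$ is classified up to isomorphism by $w_1\in H^1(F;\mathbb{F}_2)$; in particular it is trivial if and only if $w_1=0$. Thus the hypotheses say exactly that $w:=w_1(q)\neq0$ while its image in $H^1(\partial F;\mathbb{F}_2)$ vanishes.

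First I would use the cohomology exact sequence of $(F,\partial F)$ to lift $w$ to $\tilde w\in H^1(F,\partial F;\mathbb{F}_2)$, and then apply Lefschetz duality $H^1(F,\partial F;\mathbb{F}_2)\cong H_1(F;\mathbb{F}_2)$ to obtain a dual class $d$, characterized by $\langle w,[\ell]\rangle=d\cdot[\ell]$ for every loop $\ell$, where $\cdot$ is the mod $2$ intersection pairing. Since the radical of this pairing is precisely the image of $H_1(\partial F;\mathbb{F}_2)$, i.e. the boundary-parallel classes, the nonvanishing of $w$ forces $d\neq0$ and $d\notin\mathrm{rad}$. A concrete way to exhibit $d$ is to view $q$ as the sphere bundle of a $3$-plane bundle, take the determinant line bundle $L$ (so $w_1(L)=w$), and set $d=s^{-1}(0)$ for a generic section $s$ that is nonzero on $\partial F$; then $d$ is an embedded closed $1$-manifold in the interior, Poincar\'e--Lefschetz dual to $w$.

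Next I would replace the multicurve $d$ by a single simple closed curve. Its components are disjoint embedded circles in the connected surface $F$, so I can join them by disjoint embedded arcs and perform band sums to obtain one embedded simple closed curve $\gamma$ in the interior with $[\gamma]=d$ in $H_1(F;\mathbb{F}_2)$. (Alternatively, cap the boundary circles with discs to form the closed surface $\hat F$, represent the nonzero image of $d$ there by a simple closed curve, since every nonzero $\mathbb{F}_2$-class on a closed surface is so represented, and isotope it off the discs.) Because a separating curve is homologous to a sum of boundary components and hence lies in $\mathrm{rad}$, while $[\gamma]=d\notin\mathrm{rad}$, the curve $\gamma$ is automatically non-separating.

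Finally I would verify triviality over the complement. Any loop in $F\setminus\gamma$ is disjoint from $\gamma$, so its class lies in $\gamma^{\perp}$; as $[\gamma]=d$ we have $\gamma^{\perp}=d^{\perp}=\ker(w)$, whence $w$ vanishes on the image of $H_1(F\setminus\gamma;\mathbb{F}_2)\to H_1(F;\mathbb{F}_2)$. Therefore $w_1(q)$ pulls back to $0$ on $F\setminus\gamma$, and by the classification above the restricted $S^2$-bundle is trivial. I expect the middle step to be the main obstacle: upgrading the homological dual $d$ to an honestly embedded single simple closed curve while controlling its class modulo the boundary-parallel classes, so that it is simultaneously non-separating and cuts $w$ down to zero. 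The duality translation and the final restriction argument are then formal.
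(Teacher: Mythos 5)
Your proof is correct and follows essentially the same route as the paper: the paper's proof likewise identifies the bundle with its classifying class $u\in H^1(F;\mathbb{F}_2)$, lifts it using $u|_{\partial F}=0$, and takes the Poincar\'e--Lefschetz dual simple closed curve $\gamma$, concluding that $u$ restricts to zero on $F\setminus\gamma$. You have merely filled in the details the paper leaves implicit (representing the dual class by a single embedded curve via band sums, why it is non-separating, and the intersection-pairing computation on the complement), all of which are sound.
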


\begin{proof}
The bundle is determined by the action of $\pi_1(F)$ on $\pi_2(E)$,
and thus by a class $u\in{H^1(F;\mathbb{F}_2)}$.
Since $u|_{\partial{F}}=0$ and $u\not=0$ 
the Poincar\'e-Lefshetz dual of $u$ is represented
by a simple closed curve $\gamma$ in the interior of $F$,
and $u$ restricts to 0 on $F\setminus\gamma$.
\end{proof}

The restrictions to each fibre of a bundle automorphism of an $S^2$-bundle 
over a connected base must either all preserve the orientation
of the fibre or reverse the orientation of the fibre.
As every $S^2$-orbifold bundle has a fibre-preserving
self-homeomorphism which is the involution on each fibre,
it shall suffice to consider the fibre-orientation-preserving
automorphisms.

\begin{lemma}
Let $q:E\to{F}$ be an $S^2$-bundle over a surface such that 
$q|_{\partial{E}}$ is trivial.
If $\partial{E}$ has boundary components $\{C_i\mid1\leq{i}\leq{d}\}$
for some $d>0$ and if $\phi_i$ is an orientation-preserving 
bundle automorphism of $q|_{C_i}$ for $i<d$
then there is a bundle automorphism $\phi$ of $q$ such that 
$\phi|_{q^{-1}(C_i)}=\phi_i$ for $i<d$.
\end{lemma}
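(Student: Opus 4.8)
The plan is to strip the problem down to its essential invariant --- the fibre twist around each boundary circle --- and then to solve a single homological realisation problem in which the unconstrained component $C_d$ plays the decisive role. Since $q|_{\partial E}$ is trivial I would begin by fixing trivialisations $q^{-1}(C_i)\cong S^2\times S^1$ and a product collar $q^{-1}(\partial F\times[0,1])\cong\partial E\times[0,1]$ on which $q$ is the projection. Up to isotopy through fibre-orientation-preserving bundle automorphisms of $q|_{C_i}$, each $\phi_i$ is a product of the base reflection $\beta$ and the twist $\tau$ of \S2; the factor that really matters is the twist, which is classified by an element $t_i\in\pi_1(SO(3))\cong\mathbb{F}_2$.

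As a preliminary reduction I would realise the underlying base maps. The self-map of $C_i$ underlying $\phi_i$ is isotopic either to the identity or to a reflection, and I would choose a single self-homeomorphism $\bar\phi$ of $F$ restricting to these on $C_1,\dots,C_{d-1}$, using our complete freedom over the remaining component $C_d$. Because $u|_{\partial F}=0$, one can arrange that $\bar\phi$ preserves the class $u\in H^1(F;\mathbb{F}_2)$ classifying $q$, so that $\bar\phi$ lifts to a bundle automorphism $\psi$; after a collar isotopy $\psi$ and $\phi_i$ induce the same map of $C_i$ for $i<d$. Replacing $\phi_i$ by $(\psi|_{q^{-1}(C_i)})^{-1}\phi_i$ reduces us to the case in which every $\phi_i$ is \emph{vertical} (covers $\mathrm{id}_{C_i}$) and is therefore determined up to isotopy by its twist $t_i$.

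The heart of the proof is then to produce a global vertical automorphism $\Theta$ of $q$ whose twist on $C_i$ is $t_i$ for $i<d$. Such a $\Theta$ is a section of the $SO(3)$-bundle of fibrewise orientation-preserving isometries of the fibres (a bundle that is trivial over $\partial F$), and its homotopy class, compared with the identity section, is a class in $H^1(F;\mathbb{F}_2)$ whose boundary twists are recorded by its image under the restriction $H^1(F;\mathbb{F}_2)\to H^1(\partial F;\mathbb{F}_2)=\bigoplus_{i=1}^{d}\mathbb{F}_2$. By the cohomology exact sequence of $(F,\partial F)$ this image is precisely the kernel of the connecting map into $H^2(F,\partial F;\mathbb{F}_2)\cong\mathbb{F}_2$, which after the identification is the sum-of-coordinates map --- equivalently, $\sum_{i=1}^d[C_i]=0$ in $H_1(F;\mathbb{F}_2)$. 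Hence a twist vector $(t_1,\dots,t_d)$ is realised exactly when $\sum_i t_i=0$; since $C_d$ is free I would set $t_d:=\sum_{i<d}t_i$ and realise the resulting admissible vector by $\Theta$. The composite $\psi\circ\Theta$, made to agree with $\phi_i$ on the nose by one last collar isotopy, is the desired $\phi$.

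I expect the main obstacle to be exactly this final realisation step. A vertical automorphism prescribed on a proper subfamily of the boundary circles need not extend over $E$, because the total twist is forced to vanish; a single exceptional fibre already shows that $\tau$ does not extend across $S^2\times D^2$. The whole force of the hypothesis ``$i<d$'' is that the free component $C_d$ absorbs this $\mathbb{F}_2$-valued obstruction, so leaving one boundary component unconstrained is essential rather than cosmetic. The only other point requiring care is the base-map realisation in the preliminary step, where the same freedom over $C_d$ is what makes a single compatible $\bar\phi$ available.
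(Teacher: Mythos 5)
Your central mechanism is the same as the paper's: the only obstruction to filling in a family of boundary twists is their sum in $\mathbb{F}_2$, and the unconstrained component $C_d$ is used to absorb it. The paper does this by hand: it presents $F$ as a $(2k+d)$-gon $P$ with $2k$ sides identified in pairs, puts $\phi_i$ on the sides corresponding to $C_i$ for $i<d$, the identity on the identified sides, and an automorphism of class $\Sigma_{i<d}[\phi_i]$ in $\pi_1(Homeo(S^2))=Z/2Z$ on the side corresponding to $C_d$; the total class around $\partial{P}$ then vanishes, so the map into $Homeo(S^2)$ extends over the 2-cell. Your exact sequence $H^1(F;\mathbb{F}_2)\to H^1(\partial F;\mathbb{F}_2)\to H^2(F,\partial F;\mathbb{F}_2)$, with connecting map the coordinate sum, is precisely the cohomological form of that cell-by-cell extension, so this part of your argument is sound. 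One genuine divergence: for nontrivial $q$ the paper cuts along the curve $\gamma$ Poincar\'e dual to $u$ (Lemma 3) to reduce to the trivial bundle, whereas you treat sections of the fibrewise-isometry bundle uniformly. That shortcut is legitimate, but for a reason you should make explicit: conjugation by the antipodal map is trivial in $O(3)$, so the $SO(3)$-bundle of fibrewise orientation-preserving isometries is trivial and the coefficient system $\pi_1(SO(3))$ is constant; this is the only place where nontriviality of $q$ could have entered.

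Your preliminary reduction, however, contains a step that can fail, and it is worth seeing exactly why. You allow each $\phi_i$ to have a $\beta$-factor and propose to find a self-homeomorphism $\bar\phi$ of $F$ inducing the prescribed identities/reflections on $C_1,\dots,C_{d-1}$. If $F$ is orientable and the prescriptions are mixed --- the identity on $C_1$, a reflection on $C_2$, say --- then no such $\bar\phi$ exists, and indeed no bundle automorphism of $q$ at all induces both: a homeomorphism of connected orientable $F$ preserving each boundary circle is either orientation-preserving on all of them or orientation-reversing on all of them (this is exactly assertion (1) of Lemma 6), and your freedom over $C_d$ is of no help here. So under your broad reading of ``orientation-preserving'' (fibre-orientation-preserving, with $\beta$-factors permitted) the lemma itself is false, and your proof breaks at precisely this step; the further claim that $\bar\phi$ can be chosen to preserve $u$ is also unjustified. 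The statement has to be read the way the paper's proof implicitly reads it: the underlying base maps of the $\phi_i$ are isotopic to the identity, so each $\phi_i$ is isotopic to a vertical automorphism classified by its twist $t_i\in\pi_1(Homeo(S^2))$. With that reading your preliminary step is vacuous ($\bar\phi=\mathrm{id}$ suffices), and the remainder of your argument is complete and correct.
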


\begin{proof}
We may clearly assume that $d\geq2$.
Suppose first that $q$ is trivial.
We may obtain $F$ by identifying in pairs $2k$ sides of a $(2k+d)$-gon $P$. 
(The remaining sides corresponding to the boundary components $C_i$.)
A bundle automorphism of a trivial $S^2$-bundle over $X$ 
is determined by a map from $X$ to $Homeo(S^2)$.
Let $[\phi_i]$ be the image of $\phi_i$ in $\pi_1(Homeo(S^2))=Z/2Z$,
for $i<d$,
and define $\phi_d$ on $q^{-1}(C_d)$ so that $[\phi_d]=\Sigma_{i<d}[\phi_i]$.
Let $\phi$ be the identity on the images of the other sides of $P$.
Then $[\phi|_{\partial{P}}]=0$ and so $\phi|_{\partial{P}}$ extends across $P$. 
This clearly induces a bundle automorphism $\phi$ of $q$
compatible with the data.

If $q$ is nontrivial let $\gamma$ be a simple closed curve in $F$ 
as in the previous lemma,
and let $N$ be an open regular neighbourhood of $\gamma$.
If $q$ is trivial let $N=\emptyset$.
Then the restriction of $q$ over $F'=F\setminus{N}$ is
trivial, and so $E'=q^{-1}(F')\cong{F'}\times{S^2}$.
If $N\cong\gamma\times(-1,1)$ then $\partial{E'}$ has $d+2$ components;
if $N\cong{Mb}$ and $\partial{E'}$ has $d+1$ components.
In either case, we let $\phi$ be the identity on the new boundary components,
and proceed as before.
\end{proof}

By Lemma 2 the number of components of $\partial{N}$ over which 
the restriction of $p$ is nontrivial is even. 
We may use the following lemmas to simplify the treatment of
such components.
Let $D_{oo}=S^2\setminus3intD^2$ be the ``pair of pants", 
with boundary $\partial{D_{oo}}=C_1\cup{C_1}\cup{C_3}$.

\begin{lemma}
Let $F$ be a compact surface with at least $2$ boundary components
$C$ and $C'$.
Then there is a simple closed curve $\gamma$ in the interior of $F$ 
such that $F=X\cup{Y}$, 
where $X\cong{D_{oo}}$ and $\partial{X}=C\cup{C'}\cup\gamma$.
\end{lemma}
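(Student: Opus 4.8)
The plan is to construct $X$ explicitly as a regular neighbourhood of a graph and then identify it via the classification of compact surfaces. First I would choose a properly embedded arc $\delta$ in $F$ with one endpoint on $C$, the other endpoint on $C'$, and interior lying in the interior of $F$; such an arc exists because $F$ is connected. Let $G=C\cup\delta\cup{C'}$ and let $X$ be a regular neighbourhood of $G$ in $F$, chosen to contain collar neighbourhoods of all of $C$ and of $C'$. Since $X$ deformation retracts onto $G$, which is homotopy equivalent to a wedge of two circles, we have $\chi(X)=\chi(G)=-1$.

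The key step is to show that $X\cong{D_{oo}}$. I would present $X$ as the union of an annular neighbourhood $A$ of $C$, an annular neighbourhood $A'$ of $C'$, and a band $R$ (a neighbourhood of $\delta$) attached along one arc to the inner boundary circle of $A$ and along another arc to the inner boundary circle of $A'$. Tracing the boundary shows that the band merges these two inner circles into a single simple closed curve $\gamma$, so $\partial{X}=C\cup{C'}\cup\gamma$ has exactly three components. Moreover $X$ is orientable: any loop in $X$ must traverse the single band $R$ an even number of times in order to close up, so no loop can pick up an odd number of the half-twists carried by $R$, and hence $X$ contains no M\"obius band. By the classification of compact surfaces, the orientable surface with $\chi=-1$ and three boundary components is the pair of pants, so $X\cong{D_{oo}}$ with $\partial{X}=C\cup{C'}\cup\gamma$.

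Finally I would set $Y=\overline{F\setminus{X}}$, so that $F=X\cup{Y}$ with the two pieces glued along the interior simple closed curve $\gamma$, as required. The one point needing care --- and the main obstacle --- is verifying that $X$ has exactly three boundary components and is orientable independently of how the band $R$ sits in $F$; both facts follow from the observation that a single band joining two \emph{distinct} boundary circles always fuses them into one circle and, being crossed an even number of times by any loop, cannot create an orientation-reversing loop. Everything else is a routine Euler-characteristic count together with the classification of surfaces.
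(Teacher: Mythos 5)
Your proposal is correct and follows essentially the same route as the paper: the paper's proof is exactly to take an arc $\alpha$ from $C$ to $C'$ and let $X$ be a regular neighbourhood of $C\cup\alpha\cup{C'}$. Your additional verifications (Euler characteristic $-1$, three boundary components, orientability, then the classification of compact surfaces) simply make explicit the identification $X\cong{D_{oo}}$ that the paper leaves to the reader.
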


\begin{proof}
Let $\alpha$ be an arc from $C$ to $C'$.
Then we may take $X$ to be a regular
neighbourhood of $C\cup\alpha\cup{C'}$.
\end{proof}

The two exceptional fibres in $E(2,2)$ have regular neighbourhoods
equivalent to $E(2)$. If we delete the interiors of two such
neighbourhoods we obtain the $S^2$-bundle over $D_{oo}$
which is trivial over exactly one component of $\partial{D_{oo}}$.
Since $D_{oo}\simeq{S^1\vee{S^1}}$ this bundle is well-defined
up to isomorphism.

\begin{lemma}
Let $q:E\to{D_{oo}}$ be the $S^2$-bundle which is nontrivial 
over $C_1$ and $C_2$ and trivial over $C_3$.
If $\phi\in {Aut}(q)$ is an automorphism of $q$ let $\phi_i$ be 
the restriction of $\phi$ to $E_i=q^{-1}(C_i)$, and let $b_i$
the underlying self-homeomorphism of $C_i$, for $i\leq3$.
Then
\begin{enumerate}
\item the $b_i$ either all preserve or all reverse orientation;

\item If $\psi$ is an automorphism of $S^2\tilde\times{S^1}$ then there is an
automorphism $\phi$ of $q$ such that $\phi_1=\phi_2=\psi$,
and such that $\phi_3$ extends across $S^2\times{D^2}$;

\item if $\phi\in {Aut}(q)$ then $\phi_1$ and $\phi_2$ are isotopic
if and only if $\phi_3$ extends across $S^2\times{D^2}$;

\item there is a $\phi\in {Aut}(q)$ such that $\phi_1=id$, $\phi_2=\xi$ and
$\phi_3=\tau$.
\end{enumerate}
\end{lemma}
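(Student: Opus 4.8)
The plan is to reduce all four assertions to a single relation among the ``twist coefficients'' of $\phi_1,\phi_2,\phi_3$ in $\pi_1(\mathrm{Homeo}(S^2))=Z/2Z$, obtained by the same disc-extension argument used in the proof of Lemma 4. Throughout I work with fibre-orientation-preserving automorphisms, as permitted by the remark preceding the lemma, and I write $t(\phi_i)\in\pi_1(\mathrm{Homeo}(S^2))=Z/2Z$ for the twist coefficient of $\phi_i$ (the $\xi$-coefficient when the bundle over $C_i$ is nontrivial, the $\tau$-coefficient over $C_3$). Part (1) is immediate and is not the real content: a fibre-preserving $\phi$ covers a self-homeomorphism $b$ of the connected surface $D_{oo}$ with $b(C_i)=C_i$, and since $D_{oo}$ is connected and orientable, $b$ either preserves or reverses its orientation globally, hence preserves or reverses the induced orientation on every boundary circle simultaneously; thus the $b_i$ behave uniformly, and in particular the base-reflection ($\tilde\beta$ and $\beta$) coefficients of $\phi_1,\phi_2,\phi_3$ are synchronised.

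For the remaining parts I would first set up a clutching model exactly as in Lemma 4: write $D_{oo}$ as a polygon $P$ with edges identified in pairs, the three free edges being $C_1,C_2,C_3$, so that $q$ becomes the trivial bundle $S^2\times P$ reassembled by clutching functions along the identified edges. Across an edge where $U$ is nontrivial the clutching is conjugation by the antipodal map $a$; elsewhere it is the identity. A fibre-orientation-preserving automorphism of $q$ then corresponds, up to isotopy, to a map $g\colon P\to\mathrm{Homeo}^{+}(S^2)\simeq SO(3)$ compatible with these clutchings, with $t(\phi_i)=[g|_{C_i}]\in\pi_1(SO(3))=Z/2Z$. The key computation is that $g$ extends over the disc $P$ if and only if $[g|_{\partial P}]=0$, and that the paired identified edges contribute nothing to $[g|_{\partial P}]$: each such pair is traversed in opposite senses along $\partial P$, and conjugation by $a$ acts trivially on $\pi_1(SO(3))$, so the two contributions cancel in $Z/2Z$. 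This yields the master relation $t(\phi_3)=t(\phi_1)+t(\phi_2)$.

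With this relation in hand the three remaining statements follow. Since $\phi_3$ extends across $S^2\times D^2$ exactly when $t(\phi_3)=0$ (by Lemma 1(i) both $\alpha$ and $\beta$ extend, while $\tau$ does not, so the extendable automorphisms are precisely those with vanishing twist coefficient), part (3) reduces to $t(\phi_1)=t(\phi_2)$; combined with the synchronisation of base-reflection coefficients from part (1), this is equivalent to $\phi_1\simeq\phi_2$ in $\pi_0\mathrm{Aut}(S^2\tilde\times S^1)=(Z/2Z)^2$, giving both directions. Parts (2) and (4) are the constructive converse: I prescribe the free-edge values of $g$ so as to realise the required restrictions over $C_1$ and $C_2$ (namely $\psi,\psi$ for (2) and $\mathrm{id},\xi$ for (4)), extend $g$ over the identified edges via the clutchings, and then extend over the disc $P$, which forces $g|_{C_3}$ to carry the complementary winding $t(\phi_1)+t(\phi_2)$. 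This equals $0$ in case (2), so $\phi_3$ extends, and equals $1$ in case (4), so $\phi_3$ is isotopic to $\tau$; choosing the base map to be the identity makes $\phi_3$ exactly $\tau$, and one arranges $\phi_1=\mathrm{id}$, $\phi_2=\xi$ on the nose by selecting the standard representatives.

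The main obstacle I anticipate is the bookkeeping in the clutching model: verifying that the conjugation-by-$a$ clutchings across the nontrivial edges genuinely cancel in $\pi_1(SO(3))$ and contribute no constant term, and correctly identifying the $\pi_1$-winding of $g$ around a free edge $C_1$ or $C_2$ with the twist coefficient of an automorphism of the \emph{nontrivial} bundle $S^2\tilde\times S^1$ rather than of a trivial one. Once the master relation $t(\phi_3)=t(\phi_1)+t(\phi_2)$ and the synchronisation of base-reflection coefficients from part (1) are secured, the extension steps are routine adaptations of the argument in Lemma 4.
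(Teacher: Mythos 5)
Your core idea --- deriving the master relation $t(\phi_3)=t(\phi_1)+t(\phi_2)$ by cutting $D_{oo}$ into a polygon and cancelling the paired-edge contributions in $\pi_1(\mathrm{Homeo}^+(S^2))\cong\pi_1(SO(3))=Z/2Z$ --- is sound (since $a=-I$ is central in $O(3)$, the conjugation-compatibilities become literal equalities after retracting to $SO(3)$, so the free edges give honest loops and the paired edges cancel), and it is genuinely different from the paper's argument. The paper instead builds the explicit model $L=S^2\times[0,1]^2/\!\sim$, which is $(S^2\tilde\times S^1)\times[0,1]$ fibred over an annulus, deletes the preimage of a disc, and proves (3) by a gluing argument: if $\phi_1\simeq\phi_2$ then $\phi$ extends over $E(2,2)$, and if $\phi_3$ did not extend across $S^2\times D^2$ one would get $E(2,2)\cup_\tau S^2\times D^2\cong E(2,2)\cup S^2\times D^2$, contradicting the $q$-invariant computation of \cite{KKR} invoked in \S4. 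Your route buys something real here: for ``$\phi_1\simeq\phi_2\Rightarrow\phi_3$ extends'' you need only the master relation and Lemma 1, and for the converse only the classical fact that $\tau$ does not extend across $S^2\times D^2$ (i.e.\ $S^2\times S^2\not\cong CP^2\#\overline{CP^2}$); you never need the harder statement that $\tau$ does not extend across $E(2,2)$. Your part (4) likewise parallels, and slightly streamlines, the paper's cut-along-two-arcs construction.

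There is, however, a genuine gap: the dictionary ``automorphism of $q$ $\leftrightarrow$ compatible map $g:P\to\mathrm{Homeo}^+(S^2)$'' is valid only for automorphisms whose underlying base homeomorphism preserves orientation (and is hence isotopic to $\mathrm{id}_{D_{oo}}$); an automorphism covering a reflection of the base is never of the form $(z,x)\mapsto(g(x)(z),x)$. Consequently (i) part (2) is not proved for $\psi=\tilde\beta$ or $\tilde\beta\xi$ --- your prescription $g|_{C_1}=g|_{C_2}=\psi$ does not typecheck, since every automorphism your model produces restricts over $C_1$ and $C_2$ to maps covering the identity of the circle --- and (ii) the master relation is established only in the orientation-preserving sector, whereas your proof of (3) invokes it for arbitrary $\phi$, including those covering reflections. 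The gap is fixable: it suffices to exhibit one reflection-covering automorphism with known restrictions and then compose, using additivity of $t$; this is exactly what the paper's product model gives for free, namely $\mathrm{id}_{[0,1]}\times\psi$ on $L\cong(S^2\tilde\times S^1)\times[0,1]$, which restricts to $\psi$ over $C_1$ and $C_2$ and, over $C_3$, to a map that tautologically extends across $p_L^{-1}(D)\cong S^2\times D^2$. As written, though, your argument does not cover these cases.
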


\begin{proof}
Let $L=S^2\times[0,1]^2/\sim$, where $(z,x,0)\sim(a(z),x,1)$
for all $s\in{S^2}$ and $0\leq{x}\leq1$.
Then $L$ is the total space of the nontrivial $S^2$-bundle over the annulus
$A=[0,1]\times{S^1}$, with projection $p_L:L\to{A}$ given by 
$p_L([z,x,y])=(x,e^{2\pi{iy}})$.
The boundary components of $L$ are each homeomorphic to $S^2\tilde\times{S^1}$.
Let $k=(\frac12,1)\in{A}$, $D=\{(x,u)\in{A}\mid d((x,u),K)<\frac14\}$,
$B=A\setminus{D}$ and $E=L\setminus{p_L^{-1}(D)}$.
Then $p_L|_E$ is a model for $q$.

The first assertion is clear, since $D_{oo}$ is orientable.

The automorphism $id_{[0,1]}\times\psi$ of $p_L$ restricts
to an automorphism $\phi$ of $q$ with the desired boundary behaviour.

If $\phi_3$ extends across $S^2\times{D^2}$ then $\phi_1$ and $\phi_2$
together bound an automorphism of $p_L$, and so must be isotopic.
Conversely, if $\phi_1$ and $\phi_2$ are isotopic we may assume that
they are isotopic to the identity, by (2).
The automorphism $\phi$ then extends to an automorphism of $E(2,2)$.
Now $E(2,2)\cup_\tau{S^2}\times{D^2}$
is not homeomorphic to $E(2,2)\cup{S^2}\times{D^2}$.
(See \S4 below).
Therefore $\tau$ does not extend across $E(2,2)$,
and so $\phi_3$ must extend across $S^2\times{D^2}$.

Let $P=(0,-1)$, $Q=(1,-1)$ $R=(\frac34,1)$ and $S=(1,1)$ be points in $B$
and let $B'=B\setminus(PQ\cup{RS})\times(-\varepsilon,\varepsilon)$.
Then $B'\cong{D^2}$, and so the restriction $q'=q|_{B'}$ is trivial.
We may clearly define a bundle automorphism of $q'$
which rotates the fibre once as we go along each of the arcs
corresponding to $\{1\}\times{S^1}$ and $\partial{D}$
and is the identity over the rest of the boundary.
Since the automorphisms agree along the pairs of arcs corresponding to
$PQ$ and $RS$, we obtain the desired automorphism of $q$.
\end{proof}

Let $j:S^2\times{D^2}\to{M}$ be a fibre-preserving embedding of a
closed regular neighbourhood of a regular fibre of $p$,
and let $N$ be the image of $j$.
The {\it Gluck reconstruction} of $p$ is the orbifold bundle 
$p^\tau:M^\tau\to{B}$ with total space 
$M^\tau=M\setminus{intN}\cup_{j\tau}{S^2\times{D^2}}$
and projection given by $p$ on $M\setminus{intN}$
and by projection to the second factor on $S^2\times{D^2}$.

\begin{theorem}
Let $p:M\to{B}$ and $p':M'\to{B}$ be $S^2$-orbifold bundles 
over the same base $B$ and with the same action
$u:\pi_1^{orb}(B)\to\mathbb{Z}^\times$. 
If $\Sigma{B}$ is nonempty then $p'$ is isomorphic to $p$ or $p^\tau$,
and so $M'\cong{M}$ or $M^\tau$.
\end{theorem}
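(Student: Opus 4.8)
The plan is to decompose the base orbifold $B$ along curves separating the singular locus, so that both bundles $p$ and $p'$ are cut into pieces over which they are forced (by the local models of §2 and the classification there) to be isomorphic, and then reassemble, tracking the gluing data. The key point is that two $S^2$-bundles over the same base with the same action $u$ differ only by their clutching data over the $1$-skeleton, and this clutching data takes values in the mapping class groups $\pi_0(\mathrm{Homeo}(S^2\times S^1))\cong(\mathbb{Z}/2\mathbb{Z})^3$ and $\pi_0(\mathrm{Homeo}(S^2\tilde\times S^1))\cong(\mathbb{Z}/2\mathbb{Z})^2$ computed in §2. Since $u$ is fixed, the $\alpha$-coordinate (fibre-orientation-reversal) is determined, so I expect the difference to be captured by the $\tau$ (or $\xi$) coordinate alone.

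Concretely, I would first use Lemma 2 to note that $\Sigma B$ consists of cone points of order $2$ and reflector curves, and let $N$ be a regular neighbourhood of $\Sigma B$. Over each component of $N$ the restricted bundle is one of the standard local models $E(2)$, $\mathbb{E}$, or $\mathbb{E}'$, and by Lemma 1 the prescribed boundary automorphisms of these models extend fibre-preservingly; the only obstruction is the non-extending twist $\tau$ across $S^2\times D^2$ (equivalently $\xi$ across $E(2)$ and $\mathbb{E}'$), as noted after Lemma 1. Next, over the complement $B\setminus N$, which is a surface with boundary, Lemma 4 lets me realize any prescribed orientation-preserving boundary automorphisms (subject to the single mod-$2$ constraint $[\phi_d]=\sum_{i<d}[\phi_i]$) by a global bundle automorphism, after possibly trivializing across a non-separating curve $\gamma$ as in Lemma 3. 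Lemma 6, together with Lemma 5 for splitting off a pair of pants, handles the bookkeeping when two boundary components carry nontrivial restrictions, showing precisely when boundary twists can be absorbed and when they force a residual $\tau$.

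Assembling these, I would build a fibre-preserving homeomorphism $M\to M'$ piece by piece: over $B\setminus N$ use Lemma 4 to match whatever boundary data $p'$ imposes against that of $p$, then over each component of $N$ invoke Lemma 1 to extend. The single genuine obstruction is the $\tau$-twist across an embedded $S^2\times D^2$ neighbourhood of a \emph{regular} fibre, since all other discrepancies in the clutching cocycle can be pushed into the extendable classes $\alpha,\beta,\tilde\beta$ by the preceding lemmas. If that residual twist vanishes we get $M'\cong M$; otherwise the two bundles differ exactly by Gluck reconstruction, giving $M'\cong M^\tau$. Finally, when $B$ has a reflector curve the local model $\mathbb{E}$ absorbs \emph{every} boundary automorphism by Lemma 1(2), so the twist can always be cancelled and the bundle is unique—matching the abstract's claim.

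The hard part will be the global cocycle bookkeeping: showing that the full collection of clutching differences between $p$ and $p'$, living in products of the $(\mathbb{Z}/2\mathbb{Z})^{2}$ and $(\mathbb{Z}/2\mathbb{Z})^{3}$ mapping class groups, can be simultaneously normalized so that only the single $\tau$-class survives. I expect this to rest on the mod-$2$ parity relation from Lemma 2 (the number of nontrivial boundary components is even) to guarantee the constraint $[\phi_d]=\sum_{i<d}[\phi_i]$ in Lemma 4 is satisfiable, letting each nontrivially-fibred component of $\partial N$ be paired off via Lemmas 5 and 6 so that its twist is transported to a single chosen regular fibre.
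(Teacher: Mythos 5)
Your proposal is correct and follows essentially the same route as the paper's proof: decompose $B$ into a neighbourhood $N$ of $\Sigma B$ carrying the standard local models and the complementary surface $B\setminus N$, match the two bundles off a regular fibre using Lemmas 4 and 6 (with the parity from Lemma 2 making the bookkeeping work), and observe that the only residual obstruction is a single twist $\tau$ over a regular fibre, i.e.\ Gluck reconstruction. The only cosmetic difference is that the paper absorbs pairs of cone points into $D(2,2)$ pieces (local model $E(2,2)$, with trivial boundary restriction) rather than taking individual $E(2)$ neighbourhoods, but it still relies on Lemma 6 for exactly the pairing-off argument you describe.
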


\begin{proof}
The base $B$ has a suborbifold $N$ which contains $\Sigma{B}$ 
and is a disjoint union of copies of regular neighbourhoods 
of reflector curves and copies of $D(2,2)$, by Lemma 2.
If $C$ is a reflector curve, with regular neighbourhood 
$N(C)\cong{J}\times{S^1}$, 
then $p^{-1}(N(C))\cong\mathbb{E}$ or $\mathbb{E}'$, 
while if $D(2,2)\subset{B}$ then $p^{-1}(D(2,2))\cong{E(2,2)}$.

Since $N$ is nonempty and the restrictions of $p$ and $p'$ 
over $B\setminus{N}$ are $S^2$ bundles with the same data they are isomorphic.
Moreover the bundles are trivial over the boundary components of
$B\setminus{N}$.
After composing with a fibrewise involution, if necessary,
we may assume that the bundle isomorphism restricts 
to orientation-preserving homeomorphisms of these boundary components.
Let $R$ be a regular neighbourhood of a regular fibre $S^2$.
Using Lemmas 4 and 6 we may construct a fibre-preserving homeomorphism $h$
from $M\setminus{p^{-1}(R)}$ to $M'\setminus{p'^{-1}(R)}$.
If $h|_{\partial{R}}$ extends across $R$ then $p'\cong{p}$;
otherwise $p'\cong{p^\tau}$.
\end{proof}

If $u$ is nontrivial the standard geometric 4-manifold $M_{st}$
realizing $\pi=\pi_1^{orb}(B)$ is the total space of an orbifold bundle 
$p_{st}$ with regular fibre $S^2$, base $B$ and action $u$.

\begin{cor}[A]
Every $S^2$-orbifold bundle is either geometric or is the
Gluck reconstruction of a standard geometric orbifold bundle.
\qed
\end{cor}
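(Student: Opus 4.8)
Every $S^2$-orbifold bundle is either geometric or is the Gluck reconstruction of a standard geometric orbifold bundle.

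Let me think about this.

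The corollary follows from Theorem 7. Let me recall the setup.The plan is to deduce this immediately from Theorem 7 together with the preceding remark identifying $p_{st}$ as a standard geometric $S^2$-orbifold bundle realizing $(\pi,u)$. The key point is that the corollary is a direct specialization: Theorem 7 says that any two $S^2$-orbifold bundles over the same base $B$ with the same action $u$ (when $\Sigma B$ is nonempty) differ at most by Gluck reconstruction, and the sentence immediately before the corollary establishes that $M_{st}$ (equivalently $p_{st}$) is itself such a bundle, and is geometric by construction (it carries the $\mathbb{S}^2\times\mathbb{E}^2$- or $\mathbb{S}^2\times\mathbb{H}^2$-geometry).

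First I would fix an arbitrary $S^2$-orbifold bundle $p:M\to B$ with action $u:\pi\to\mathbb{Z}^\times$. If $u$ is trivial, then $M$ is covered trivially and the relevant cases (total spaces of honest $S^2$- or $RP^2$-bundles over aspherical surfaces) are already geometric by Theorems 10.8 and 10.9 of \cite{Hi}, as noted in the introduction; moreover $\Sigma B$ is then empty, so there are no exceptional fibres and nothing to reconstruct. So I may assume $u$ is nontrivial, in which case $p_{st}:M_{st}\to B$ is available as a geometric bundle over the same base $B$ with the same action. Provided $\Sigma B$ is nonempty, Theorem 7 applies directly and gives $p\cong p_{st}$ or $p\cong p_{st}^\tau$. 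In the first case $M$ is geometric; in the second it is exactly the Gluck reconstruction of the standard geometric bundle, which is what the corollary asserts.

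The only gap to address is the case $\Sigma B=\emptyset$ with $u$ nontrivial. Here there are no exceptional fibres, so $p$ is an honest $S^2$-bundle over a surface $B$, and by Theorem 10.8 of \cite{Hi} its total space is geometric; again there is no fibre over which $p$ differs from $p_{st}$ in the sense of Gluck reconstruction, so $M$ falls under the ``geometric'' alternative and the statement holds vacuously for the reconstruction clause. Thus every case is covered.

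The main (and essentially only) obstacle is organizational rather than mathematical: one must check that the dichotomy in Theorem 7 lines up cleanly with the geometric/non-geometric dichotomy of the corollary, i.e.\ that $p_{st}$ genuinely realizes the given $(\pi,u)$ over the \emph{same} base $B$ so that Theorem 7 is applicable with $p'=p_{st}$. Since the introductory paragraph preceding the corollary asserts precisely this realization, and since every $S^2$-orbifold bundle with nonempty singular locus has base orbifold group $\pi=\pi_1^{orb}(B)$ (by the opening of \S3) with singular locus of the constrained type (by Lemma 2), no further work is needed and the corollary is immediate. I would therefore present the proof as a one-line invocation of Theorem 7, with a parenthetical remark disposing of the non-singular case.
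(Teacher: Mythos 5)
Your main case reproduces exactly the paper's (implicit) argument: when $\Sigma{B}\not=\emptyset$ the action $u$ is nontrivial, the standard geometric bundle $p_{st}$ realizes $(\pi,u)$ over the same base $B$, and Theorem 7 gives $p\cong{p_{st}}$ or $p\cong{p_{st}^\tau}$, which is precisely the dichotomy of the corollary. That part is correct and is all the paper itself intends by the \qed.

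The gap is in your disposal of the nonsingular case. You claim that when $\Sigma{B}=\emptyset$ the total space is always geometric (citing Theorems 10.8 and 10.9 of \cite{Hi}), and that the Gluck-reconstruction alternative ``holds vacuously''. Both claims fail for $B=S^2$, which is a legitimate base for an $S^2$-orbifold bundle (see \S4: ``There are two $S^2$-bundle spaces over $S^2$''). The nontrivial $S^2$-bundle over $S^2$ has total space $CP^2\#\overline{CP^2}$, which is \emph{not} geometric; the corollary holds for it only because it \emph{is} the Gluck reconstruction of the trivial (geometric) bundle $S^2\times{S^2}\to{S^2}$, i.e.\ via the second alternative, the one you declared vacuous. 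The underlying misstep is your assertion that for a nonsingular base ``there is no fibre over which $p$ differs from $p_{st}$ in the sense of Gluck reconstruction'': Gluck reconstruction is defined (end of \S3) by regluing a neighbourhood of a \emph{regular} fibre, so it is available for every $S^2$-orbifold bundle, whether or not $\Sigma{B}$ is empty. A smaller inaccuracy of the same kind: Theorems 10.8 and 10.9 of \cite{Hi} concern aspherical bases only, so they also do not literally cover $B=RP^2$ with $u\not=1$ (those four bundle spaces are geometric, but because they are quotients of $S^2\times{S^2}$, as in \S4). The repair is easy but must be stated: over an aspherical surface every $S^2$-bundle space is geometric; over $RP^2$ all four bundle spaces are geometric; over $S^2$ the trivial bundle is geometric and the nontrivial one is its Gluck reconstruction.
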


\begin{cor}[B]
If $\Sigma{B}$ contains a reflector curve then 
every $S^2$-orbifold bundle over $B$ is a standard geometric bundle.
\qed
\end{cor}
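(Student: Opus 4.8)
The plan is to reduce the statement to a single absorption question. By Corollary A, every $S^2$-orbifold bundle over $B$ with action $u$ is either the standard geometric bundle $p_{st}$ or its Gluck reconstruction $p_{st}^\tau$, so it suffices to prove that the presence of a reflector curve forces $p_{st}^\tau\cong p_{st}$. Fixing a product neighbourhood $R\cong S^2\times D^2$ of a regular fibre, this is equivalent to constructing a fibre-preserving self-homeomorphism of $M_{st}\setminus\mathrm{int}\,R$ that restricts to the twist $\tau$ on $\partial R\cong S^2\times S^1$; for then the two fillings of $R$ are identified, exactly as in the last step of the proof of Theorem 7.

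I would first dispose of the case in which some reflector curve $C$ is \emph{untwisted}, which should be the clean case. Let $N\supseteq\Sigma B$ be the suborbifold of Lemma 2, and use the construction of Lemmas 4 and 6 (as in the proof of Theorem 7) to build a bundle automorphism of $M_{st}$ over $B\setminus N$ which restricts to $\tau$ on $\partial R$ and on the boundary component parallel to $C$, and to the identity on every other component of $\partial N$. This is consistent with Lemma 4, since the two twists contribute equally to the net class in $\pi_1(\mathrm{Homeo}(S^2))\cong Z/2Z$. Since $C$ is untwisted, the corresponding piece of $M_{st}$ is $\mathbb{E}$, with boundary $S^2\times S^1$, and by Lemma 1(2) the self-homeomorphism $\tau$ of $S^2\times S^1$ extends across $\mathbb{E}$, while the identity extends across every other piece of $N$. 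Hence the automorphism extends over all of $N$, giving $p_{st}^\tau\cong p_{st}$.

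The main obstacle is the case in which $B$ has reflector curves but all of them are $u$-twisted; by Lemma 2 these then occur in even number, possibly alongside cone points. Now the neighbourhood of a twisted reflector curve is $\mathbb{E}'$, with boundary $S^2\tilde\times S^1$, and routing $\tau$ to it produces the twist $\xi$ (via Lemma 6), which by the remark following Lemma 1 does not extend across $\mathbb{E}'$ — just as it fails to extend across the cone-point model $E(2)$, which is precisely why bases with only cone points carry two distinct bundles. So the local argument above breaks down, and the heart of the matter is to show that twisted reflector curves, unlike cone points, still absorb the Gluck twist. I expect this to require a \emph{global} rather than local argument: morally a reflector curve plays the role of orbifold (mirror) boundary, so that the relevant $H^2$-obstruction to untwisting vanishes. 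Concretely I would try either to cancel the even number of $\xi$-twists against one another, or to pass to the nonsingular double cover — the M\"obius band $Mb$ for a twisted reflector — where the two lifts of the Gluck twist cancel in $\pi_1(\mathrm{Homeo}(S^2))$, and then argue that this cancellation descends for the reflecting deck transformation of $\mathbb{E}'$ although it does not for the rotational deck transformation of $E(2)$. Making this distinction precise is, I anticipate, the crux.
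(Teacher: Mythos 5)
Your reduction to showing $p_{st}^\tau\cong p_{st}$, and your handling of an untwisted reflector curve (route the twist to the boundary circle parallel to $C$ and extend across $\mathbb{E}$ by Lemma 1(2)), is exactly the absorption argument the paper intends, and that half is fine. But the Corollary genuinely includes bases all of whose reflector curves are $u$-twisted -- the paper invokes it for $\mathbb{D}(2)$ in \S4, which has one cone point and one twisted reflector curve -- and there your proposal stops at naming the difficulty, so it is not a proof. Moreover, neither of your suggested repairs can work. Cancelling ``the even number of $\xi$-twists against one another'' is impossible: by Lemma 6(3)--(4), any automorphism of the bundle over a multiply-punctured region which is $\tau$ over $\partial R$ must be $\xi$ over exactly one nontrivial boundary circle and (up to isotopy) the identity over the others, so the twist can never be split between two $\mathbb{E}'$-pieces. (Also note your parity claim misreads Lemma 2: it is the number of cone points \emph{plus} twisted reflector curves that is even, so twisted curves can occur in odd number, as in $\mathbb{D}(2)$.)

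The missing fact is that $\xi$ \emph{does} extend across $\mathbb{E}'$ as a fibre-preserving homeomorphism covering the identity of the base; the parenthetical remark after Lemma 1, which you trusted, is wrong about $\mathbb{E}'$, and your own gluing analysis shows why it must be wrong: if $\xi$ extended across neither $E(2)$ nor $\mathbb{E}'$, then for $B=\mathbb{D}(2)$ one would have $M^\tau\not\cong M$, contradicting the uniqueness asserted in \S4. Explicitly, define $\Psi([z,x,u])=[u^2z,x,u]$, where $u^2z$ denotes the rotation $R_{2\arg u}(z)$ in the coordinates of \S1. This respects both defining relations of $\mathbb{E}'$, since $a(u^2z)=-u^2z/|u^2z|^2=u^2a(z)$ and $(-u)^2=u^2$; it covers the identity of $\mathbb{J}\times S^1$; and on $\partial\mathbb{E}'=S^2\tilde\times S^1$ it is the fibrewise rotation through a generator of $\pi_1(SO(3))$ (one full turn as one traverses the base circle $v=u^2$ once), i.e.\ it is $\xi$. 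The conceptual distinction you were groping for is this: fibre rotations of these local models are given by maps $\bar g$ from the base orbifold to $SO(3)$, and the base $\mathbb{J}\times S^1$ of $\mathbb{E}'$ contains an essential circle, so $\bar g$ may be essential on the boundary, whereas the underlying space of $D(2)$ is a disc, so for $E(2)$ the boundary restriction of $\bar g$ is always null-homotopic -- which is why cone points cannot absorb the twist but reflector curves (twisted or not) can. With $\Psi$ in hand the fix is purely local, not global: route $\tau$ to $\partial\mathbb{E}'$ as $\xi$ by Lemma 6(4) (using, say, the pair of pants between $\partial N(C)$, a parallel push-off of it, and $\partial R$), extend by the identity outside and by $\Psi$ across $\mathbb{E}'$, and conclude $p_{st}^\tau\cong p_{st}$ exactly as in your untwisted case.
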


We may also realize actions with base a non-compact hyperbolic
2-orbifold by geometric orbifold bundles.

\begin{cor}[C]
If $B$ has a nontrivial decomposition into hyperbolic pieces
then $M$ has a proper geometric decomposition.
\qed
\end{cor}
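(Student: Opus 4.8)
The plan is to pull the given decomposition of $B$ back through the projection $p$ and to recognise each resulting piece of $M$ as a geometric $S^2$-orbifold bundle over a non-compact hyperbolic $2$-orbifold. The non-geometric pieces never appear: the only way $M$ can fail to be geometric is by a localized Gluck reconstruction, and over a non-compact base this is harmless.

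First I would record what a nontrivial hyperbolic decomposition provides: a nonempty finite family $\Gamma=\{c_1,\dots,c_k\}$ of disjoint two-sided essential simple closed curves, which we may take to lie in the regular part $B\setminus\Sigma{B}$, together with complete finite-area $\mathbb{H}^2$-structures on the complementary pieces $B_1,\dots,B_n$, each $B_i$ being a non-compact $2$-orbifold in which the adjacent curves of $\Gamma$ have become cusps. Since the $c_j$ avoid $\Sigma{B}$, the restriction of $p$ over an annular neighbourhood of each $c_j$ is an ordinary $S^2$-bundle over $S^1$, so $p^{-1}(c_j)$ is a copy of $S^2\times{S^1}$ or $S^2\tilde\times{S^1}$ according to whether $u$ is trivial or nontrivial on the class of $c_j$. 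These are two-sided $\mathbb{S}^2\times\mathbb{E}^1$-hypersurfaces, and cutting $M$ along $p^{-1}(\Gamma)$ produces the pieces $M_i=p^{-1}(B_i)$.

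Next I would observe that each $M_i$ is the total space of an $S^2$-orbifold bundle over the non-compact hyperbolic $2$-orbifold $B_i$, with action the restriction of $u$ to $\pi_1^{orb}(B_i)$. Here I would invoke the realisation statement noted just before the corollary: every action over a non-compact hyperbolic $2$-orbifold is realised by a geometric orbifold bundle. The product of $B_i$ with the round $S^2$ then gives $M_i$ a complete finite-volume $\mathbb{S}^2\times\mathbb{H}^2$-structure, each of whose ends is a cusp whose cross-section is exactly the corresponding $p^{-1}(c_j)$, namely $S^2$ times a horocyclic cross-section of a cusp of $B_i$. Thus the $M_i$ are geometric and meet along the $\mathbb{S}^2\times\mathbb{E}^1$-hypersurfaces $p^{-1}(\Gamma)$, which is precisely a proper geometric decomposition of $M$.

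The point requiring care is that $M$ itself need not be geometric, since by Corollary A it may be the Gluck reconstruction $M^\tau$ of the standard bundle $p_{st}$, and $\tau$ genuinely fails to extend across a product neighbourhood of a regular fibre. The resolution is that the reconstruction is supported near a single regular fibre, which we may take to lie over an interior point of one piece $B_{i_0}$; over every other piece the restriction of $p$ agrees with that of the geometric bundle $p_{st}$ and is therefore geometric, while over the non-compact base $B_{i_0}$ the realisation statement still furnishes a geometric total space for the restricted bundle. Hence each $M_i$ is geometric irrespective of whether the closed manifold $M$ is, which is exactly what is needed.
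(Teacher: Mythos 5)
Your overall strategy is the one the paper intends: Corollary~C carries no separate proof in the text, being presented as an immediate consequence of the preceding remark that every action with base a non-compact hyperbolic $2$-orbifold is realized by a geometric orbifold bundle, and your first two paragraphs reproduce exactly that derivation (cut $B$ along curves in $B\setminus\Sigma{B}$, note that $p^{-1}(c_j)\cong S^2\times{S^1}$ or $S^2\tilde\times{S^1}$, and view each $M_i=p^{-1}(B_i)$ as an $S^2$-orbifold bundle over a cusped hyperbolic base).

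There is, however, a genuine gap at the step you yourself single out as ``the point requiring care''. The realization remark says only that the pair $\bigl(\pi_1^{orb}(B_i),u|_{\pi_1^{orb}(B_i)}\bigr)$ is realized by \emph{some} geometric orbifold bundle over $B_i$; it does not identify \emph{the given} bundle $p|_{p^{-1}(B_i)}$ with that geometric one, and your phrase ``the realisation statement still furnishes a geometric total space for the restricted bundle'' conflates these two assertions. What actually makes the Gluck twist ``harmless'', as you claim but do not prove, is a uniqueness statement: over a base orbifold with an end, an $S^2$-orbifold bundle is determined up to isomorphism by its action, so that $\bigl(p|_{p^{-1}(B_i)}\bigr)^\tau\cong p|_{p^{-1}(B_i)}$. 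This is where non-compactness genuinely enters, and it can be extracted from the paper's own machinery: in the proof of Theorem~7 the Gluck ambiguity arises only at the final gluing of a regular fibre neighbourhood, and Lemma~4 (automorphisms may be prescribed over all but one boundary component, the last one absorbing the discrepancy) applies with the end of $B_i$ playing the role of the free boundary component --- equivalently, the twist can be slid off the end. With this uniqueness in hand each $M_i$, and not merely some abstract bundle over $B_i$, carries a complete finite-volume $\mathbb{S}^2\times\mathbb{H}^2$-structure, and your argument closes. The same point repairs a second, smaller imprecision: the pieces of $M_{st}$ are not geometric ``because $p$ agrees with $p_{st}$ there'', since the restriction of the closed geometric structure on $M_{st}$ to an open piece is neither complete nor finite-volume; one must re-uniformize $B_i$ with cusps and again invoke realization together with uniqueness.
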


In particular, if $B$ is hyperbolic (and not $T(2,2)$ or $Kb(2,2)$)
then either $M$ is geometric or it has a proper geometric decomposition.

Let $B$ and $\overline{B}$ be 2-orbifolds and 
let $u$ and $\bar{u}$ be actions of $\pi=\pi^{orb}(B)$ and 
$\overline\pi=\pi^{orb}(\overline{B})$ on $Z$ with torsion-free kernels.
An orbifold map $f:B\to\overline{B}$ is {\it compatible with the actions\/} 
$u$ and $\bar{u}$ if it induces an epimorphism $f_*:\pi\to\bar\pi$ 
such that $u=\bar{u}f$.
If $p:\overline{M}\to\overline{B}$ is an $S^2$-orbifold bundle 
realizing $(\overline\pi,\bar{u})$ then the pullback $f^*p$ 
is an $S^2$-orbifold bundle realizing $(\pi,u)$. 
If moreover $f$ is an isomorphism over a non-empty open subset 
of $\overline{B}$ then $(f^*p)^\tau=f^*(p^\tau)$.

In his dissertation Vogt classified $S^2$-orbifold bundles 
over 2-orbifolds with no reflector curves.
While he expected that (in our terminology) Gluck reconstruction 
should change the homeomorphism type of the total space, 
he left this question open \cite{Vo70}.

\section{spherical base orbifold}

If the base orbifold is spherical then it must be one of $S^2$, 
$RP^2$, $S(2,2)$, $\mathbb{D}$ or $\mathbb{D}(2)$, by Lemma 2.
There are two $S^2$-bundle spaces over $S^2$, and four over $RP^2$.
The latter are quotients of $S^2\times{S^2}$ by involutions of the form
$(A,-I)$, where $A\in{GL(3,\mathbb{Z})}$ is a diagonal matrix,
and projection to the quotient of the second factor by
the antipodal map induces the bundle projection.

If $A=diag[-1,-1,1]=R_\pi$ or $diag[1,1,-1]=aR_\pi$
then projection to the first factor induces an orbifold bundle 
(over $S(2,2)$ or $\mathbb{D}$, respectively) with general fibre $S^2$.
The geometric orbifold bundle over $S(2,2)$ 
has total space $E(2,2)\cup{S^2}\times{D^2}$.
It is also the total space of an $S^2$-bundle over $RP^2$.

There is another $S^2$-orbifold bundle over $S(2,2)$,
with total space $RP^4\#_{S^1}RP^4=E(2,2)\cup_\tau{S^2}\times{D^2}$.
(Note that by Lemma 6 there is a bundle automorphism 
of $E(2,2)\setminus{E(2)}$ which is the twist $\tau$ 
on $\partial{E(2,2)}$ and the twist $\xi$ on $\partial{E(2)}$.
Hence $E(2,2)\cup_\tau{S^2}\times{D^2}\cong{E(2)}\cup_\xi{E(2)}$.
The latter model for $RP^4\#_{S^1}RP^4$ is used in \cite{KKR}.)
The total spaces of these two $S^2$-bundles over $S(2,2)$
are not homotopy equivalent,
since the values of the $q$-invariant of \cite{KKR} differ.
Thus $\tau$ does not extend to a homeomorphism of $E(2,2)$.

The $S^2$-orbifold bundle over $\mathbb{D}=S^2/z\sim{aR_\pi(z)}$ 
given by this construction is the unique such bundle, 
by Corollary B of Theorem 7.
(The reflector curve is untwisted.)
The total space is orientable and has $v_2=0$.

Finally, $\mathbb{D}(2)$ is the quotient of $S^2$ by the group $(Z/2Z)^2$
generated by $a$ and $R_\pi$. 
Since these generators commute, $R_\pi$ induces an involution of $RP^2$
which fixes $RP^1$ and a disjoint point.
The corresponding $S^2$-orbifold bundle space
is $S^2\times{S^2}/(x,y)\sim(x,-y)\sim(-x,R_\pi(y))$.
This is again the unique such bundle, 
by Corollary B of Theorem 7.
(The reflector curve is now $u$-twisted.)
It is also the total space of the nontrivial $RP^2$-bundle over $RP^2$.

\section{the $k$-invariant}

If $\pi=\pi_1(M)$ is torsion-free then $c.d.\pi=2$, and so $H^3(\pi;Z^u)=0$. 
Hence $k_1(M)=0$.
Therefore in this section we may assume that $\pi$
has an element $x$ of order 2.

Let $P=P_2(M_{st})$.
The image of $H_4(CP^\infty;\mathbb{F}_2)$ in $H_4(P;\mathbb{F}_2)$
is fixed under the action of $Aut(P)$,
and so $Aut(P)$ acts on this homology group through 
a quotient of order at most 2.
Since $M_{st}$ is geometric $Aut(\pi)$ acts isometrically.
More generally, if $M$ is the total space of an orbifold bundle
then $Aut(\pi)$ acts by orbifold automorphisms of the base.
The antipodal map on the fibres defines a self-homeomorphism which
induces $-1$ on $\pi_2(M)$.
These automorphisms clearly fix $H_4(P;\mathbb{F}_2)$.
Thus it shall be enough to consider the action of the subgroup 
of $Aut(P)$ which acts trivially on $\pi_1$ and $\pi_2$.
Since $P$ is a connected cell-complex with $\pi_i(P)=0$ for $i>2$
this subgroup is isomorphic to $H^2(\pi;Z^u)$ \cite{Ru92}.

\begin{theorem}
Let $M_o=M_{st}\setminus{intD^4}$ be the complement of an open disc in $M_{st}$.
Then $M_{st}^\tau\simeq{M_o\cup_fD^4}$ for some $f:S^3\to{M_o}$.
\end{theorem}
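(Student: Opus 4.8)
The plan is to compare $M_{st}$ and $M_{st}^\tau$ directly, exploiting the fact that both are assembled from a common piece by gluing back the neighbourhood $N\cong S^2\times D^2$ of a regular fibre, and to show that the twist $\tau$ alters nothing below the top cell. Recall that $M_{st}^\tau=(M_{st}\setminus\mathrm{int}\,N)\cup_{j\tau}S^2\times D^2$. Writing $X=M_{st}\setminus\mathrm{int}\,N$, I would present $M_{st}=X\cup_{\mathrm{id}}N$ and $M_{st}^\tau=X\cup_{\tau}N$, both glued along $\partial N=S^2\times S^1$, and then analyse the relative handle structure of the attached copy of $N$.

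The first step is to record the handle decomposition of $N=S^2\times D^2$ dual to its standard one (the $0$-framed unknot): built on $\partial N\times[0,1]$, the pair $(N,\partial N)$ consists of a single $2$-handle together with a single $4$-handle, with no $1$- or $3$-handles. This matches the homological computation $H_*(N,\partial N)\cong\mathbb{Z}$ in degrees $2$ and $4$ only, the class in degree $2$ being carried by the cocore disc $\{z_0\}\times D^2$ with boundary $\ell_0=\{z_0\}\times S^1\subset\partial X$. Hence, up to homotopy, $M_{st}\simeq(X\cup_{\ell_0}e^2)\cup e^4$, while the same decomposition of $N$ gives $M_{st}^\tau\simeq(X\cup_{\ell_1}e^2)\cup_f e^4$, where now the $2$-cell is attached along $\ell_1=\tau(\{z_0\}\times S^1)=\{(uz_0,u):u\in S^1\}$.

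The key step is to show that the two $2$-handles are attached along freely homotopic curves, so that the sub-top part is common to both manifolds. For this I would verify that $\tau_*$ is the identity on $\pi_1$ and $\pi_2$ of $S^2\times S^1$: the loop $\ell_1$ projects to the generator of $\pi_1(S^1)$ and to a null-homotopic loop in $S^2$, so $\ell_1\simeq\ell_0$ in $S^2\times S^1$, and hence in $X$. Since attaching a $2$-cell along freely homotopic maps yields homotopy equivalent adjunction spaces, the space $W:=X\cup_{\ell_0}e^2\simeq X\cup_{\ell_1}e^2$ is common to both. The only homotopy-theoretic effect of $\tau$ survives on the $3$-cell of $S^2\times S^1$, where $\mathrm{id}$ and $\tau$ differ by the image of the nontrivial element of $\pi_1(SO(3))\cong Z/2Z$ under the natural map to $\pi_3(S^2)$; this difference can reappear only as a change in the attaching map of the final $4$-cell. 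It then remains to identify $W$ with $M_o$: as $M_{st}\simeq W\cup e^4$ has a single top cell, deleting the interior of that cell deformation retracts $M_o=M_{st}\setminus\mathrm{int}\,D^4$ onto $W$, so $W\simeq M_o$. Transporting the $4$-cell attachment along this equivalence gives $M_{st}^\tau\simeq W\cup_f e^4\simeq M_o\cup_f D^4$ for some $f:S^3\to M_o$, as required.

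I expect the main obstacle to be the third step: making rigorous that the twist contributes nothing below the top cell. Concretely, this means controlling the homotopy class of the $2$-handle's attaching curve under the $\tau$-gluing (the computation $\ell_1\simeq\ell_0$ in $X$) and confirming that $(N,\partial N)$ genuinely carries no $1$- or $3$-handles, so that $W$ accounts for the entire homotopy type of $M_o$. Once the $2$-handle is seen to attach in the same class, the localisation of all remaining difference to $\pi_3(S^2)$, and hence to the attaching map of the top cell, follows formally.
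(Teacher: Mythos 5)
Your proposal is correct and follows essentially the same route as the paper: decompose the glued-in $S^2\times D^2$ into a $2$-handle plus a $4$-handle, observe that the twisted attaching circle $u\mapsto(uz_0,u)$ is homotopic to $u\mapsto(z_0,u)$ in $S^2\times S^1$ since $\pi_1(S^2)=0$, and conclude that the entire effect of $\tau$ is pushed into the attaching map of the top cell. Your extra details (the relative handle count for $(N,\partial N)$ and the identification of $W$ with $M_o$) are exactly what the paper's terser proof leaves implicit, and your closing remark on the difference lying in $\pi_3$ parallels the paper's parenthetical comment about the Hopf map $\eta$.
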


\begin{proof}
Since
$S^2\times{D^2}=(D^2\times{D^2})\cup(D^2\times{D^2})=(D^2\times{D^2})\cup{D^4}$,
we may obtain each of $M_{st}$ and $M_{st}^\tau$ from $M_{st}\setminus{N}$ 
(up to homotopy) by first adding a 2-cell and then a 4-cell.
The attaching maps for the 2-cells are the inclusions $u\mapsto(1,u)$
and $u\mapsto(u,u)$ of $S^1$ into $\partial{N}=S^2\times{S^1}$, 
respectively.
Since these are clearly homotopic,
$M_{st}^\tau$ may be obtained from $M_{st}$ 
by changing the attaching map for the top cell
of $M_{st}=M_o\cup{D^4}$.
\end{proof}

(It can be shown that the attaching maps differ
by the image of the Hopf map $\eta$ in $\pi_3(M_o)$.)

\begin{cor}
The inclusions of $M_o$ into $M_{st}$ and $M_{st}^\tau$
induce isomorphisms of cohomology in degrees $\leq3$.
\qed
\end{cor}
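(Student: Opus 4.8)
The plan is to use the structure from the previous theorem: both $M_{st}$ and $M_{st}^\tau$ are obtained from $M_o$ by attaching a single $4$-cell. I would fix a CW structure on $M_{st}$ (and the analogous one on $M_{st}^\tau$) in which $M_o$ is the subcomplex carrying all cells of dimension $\leq3$ and there is one top cell attached along a map $S^3\to M_o$. As $M_o$ is the complement of an open disc in a closed $4$-manifold it has the homotopy type of a $3$-complex, so $H^i(M_o)=0$ for $i\geq4$; the task is to show the top cell contributes nothing in degrees $\leq3$, i.e. that the inclusion $j\colon M_o\hookrightarrow M_{st}$ restricts to an isomorphism on $H^i$ for $i\leq3$.

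First I would handle degrees $\leq2$ in one stroke. Collapsing $M_o$ gives $M_{st}/M_o\cong S^4$, so $H^i(M_{st},M_o)\cong\widetilde H^i(S^4)$ vanishes except when $i=4$. In the long exact sequence of the pair the relative groups flanking $H^i(M_{st})\to H^i(M_o)$ then both vanish for $i\leq2$, forcing $j^*$ to be an isomorphism there. This leaves only the case $i=3$.

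The degree-$3$ case is the only real point, and it is settled by Poincar\'e duality. Consider
\[
0=H^3(M_{st},M_o)\to H^3(M_{st})\xrightarrow{\,j^*\,}H^3(M_o)\xrightarrow{\,\delta\,}H^4(M_{st},M_o)\xrightarrow{\,g\,}H^4(M_{st})\to H^4(M_o)=0.
\]
Here $g$ is surjective, and both $H^4(M_{st},M_o)\cong\widetilde H^4(S^4)$ and $H^4(M_{st};Z^w)$ are infinite cyclic, the latter by twisted Poincar\'e duality for the closed $4$-manifold $M_{st}$. Since a surjective endomorphism of $Z$ is an isomorphism, $g$ is injective, whence $\delta=0$ and $j^*$ is onto; combined with its injectivity this gives the isomorphism in degree $3$. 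Equivalently, in cellular terms, the top cell has trivial boundary because $H_4(M_{st};Z^w)\cong Z$ already fills the top chain group, so the dual coboundary out of degree $3$ vanishes.

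Finally I would note that the argument is insensitive to the features distinguishing the two manifolds and to the coefficient choices in play: $M_{st}^\tau$ is again a closed $4$-manifold with $M_o$ as its punctured model, so it too has infinite cyclic twisted top homology and the identical reasoning applies; and over $\mathbb{F}_2$ one simply replaces the integral fundamental class by its mod-$2$ reduction, using that a surjection $\mathbb{F}_2\to\mathbb{F}_2$ is an isomorphism. The whole content is that capping the boundary $S^3$ of a closed $4$-manifold by its top cell cannot change cohomology below dimension $4$, and the sole place this could fail---the connecting map in degree $3$---is exactly what Poincar\'e duality rules out.
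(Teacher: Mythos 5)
Your proof is correct and is exactly the argument the paper leaves implicit: the Corollary is stated with no proof beyond the appeal to Theorem 8, and your long exact sequence of the pair $(M_{st},M_o)$, with the degree-$3$ connecting map killed because the surjection $H^4(M_{st},M_o)\to H^4(M_{st})$ of infinite cyclic (resp.\ $\mathbb{F}_2$) groups must be injective, is the standard way to fill it in. Your care with coefficients is a genuine virtue here: the statement is only true for the coefficient systems the paper actually uses ($Z^w$ and $\mathbb{F}_2$), since with untwisted integral coefficients on a non-orientable $M_{st}$ the map $H^4(M_{st},M_o;Z)\cong Z\to H^4(M_{st};Z)\cong Z/2Z$ has nontrivial kernel and surjectivity in degree $3$ fails.
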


This theorem also implies that $P_2(M_{st}^\tau)\simeq{P_2(M_{st})}$,
since each may be constructed by adjoining cells to $M_o$ to
kill the higher homotopy.
However the Corollary of Theorem 10 below is stronger, 
in that it does not assume the manifolds under consideration 
are $S^2$-orbifold bundle spaces.
If $M$ is {\it any} closed 4-manifold with $\widetilde{M}\simeq{S^2}$
then the $u$-twisted Bockstein $\beta^u$ maps $H^2(\pi;\mathbb{F}_2)$
onto $H^3(\pi;Z^u)$, and the restriction of $k_1(M)$
to each subgroup of order 2 in $\pi$ is nontrivial,
by Lemma 10.4 of \cite{Hi}.
On looking at the structure of such groups and applying
Mayer-Vietoris arguments to compute these cohomology groups,
we can show that there is only one possible $k$-invariant.

\begin{lemma}
Let $\alpha=*^kZ/2Z=
\langle{x_i, 1\leq{i}\leq{k}}\mid{x_i^2=1~\forall~i}\rangle$ 
and let $u(x_i)=-1$ for all $i$.
Then restriction from $\alpha$ to $\phi=\mathrm{Ker}(u)$ 
induces an epimorphism from $H^1(\alpha;Z^u)$ to $H^1(\phi;Z)$.
\end{lemma}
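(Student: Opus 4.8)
The plan is to compute both sides explicitly via crossed homomorphisms (derivations) and to exhibit, for every class in $H^1(\phi;\mathbb{Z})$, an explicit preimage. First I would pin down $\phi=\mathrm{Ker}(u)$. Since $u(x_i)=-1$ for every $i$, no conjugate of a free factor $\langle x_i\rangle$ meets $\phi$, so by the Kurosh subgroup theorem $\phi$ is free; a Reidemeister--Schreier computation with the transversal $\{1,x_1\}$ shows that $\phi$ is freely generated by the $k-1$ elements $a_i=x_1x_i$, $2\le i\le k$. Hence $H^1(\phi;\mathbb{Z})=\mathrm{Hom}(\phi,\mathbb{Z})\cong\mathbb{Z}^{k-1}$, a class being determined by its values $h(a_i)=c_i$. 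Note also that $\mathbb{Z}^u$ restricts to the \emph{trivial} module $\mathbb{Z}$ over $\phi$, so the restriction map genuinely lands in $H^1(\phi;\mathbb{Z})$.

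Next I would describe $H^1(\alpha;\mathbb{Z}^u)=\mathrm{Der}(\alpha,\mathbb{Z}^u)/\mathrm{PDer}(\alpha,\mathbb{Z}^u)$. A derivation $d$ is determined by the values $d(x_i)=t_i\in\mathbb{Z}$, and the relations $x_i^2=1$ impose \emph{no} constraint: since $x_i$ acts by $-1$, one has $d(x_i^2)=d(x_i)+x_i\cdot d(x_i)=t_i-t_i=0$ automatically. Thus $\mathrm{Der}(\alpha,\mathbb{Z}^u)$ is free on the $t_i$, while a principal derivation $d_m$ satisfies $d_m(x_i)=x_i\cdot m-m=-2m$.

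The key computation is then the restriction itself. For $a_i=x_1x_i\in\phi$ one gets $d|_\phi(a_i)=d(x_1)+x_1\cdot d(x_i)=t_1-t_i$. Given any prescribed homomorphism with $h(a_i)=c_i$, the choice $t_1=0$ and $t_i=-c_i$ yields a derivation restricting to $h$, so restriction is already onto at the level of derivations. Since every principal derivation of $\alpha$ restricts to $0$ on $\phi$ (as $\phi$ acts trivially on $\mathbb{Z}$), the map descends to a surjection $H^1(\alpha;\mathbb{Z}^u)\twoheadrightarrow H^1(\phi;\mathbb{Z})$, which is the claim. Incidentally the kernel is the $\mathbb{Z}/2$ spanned by the class of $(1,\dots,1)$, so restriction just annihilates the torsion of $H^1(\alpha;\mathbb{Z}^u)\cong\mathbb{Z}^{k-1}\oplus\mathbb{Z}/2$.

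The main obstacle is bookkeeping with the twisted coefficients rather than anything deep: the two facts that make surjectivity work are precisely that the sign action $u(x_i)=-1$ kills the relation constraint on derivations, and that $\mathbb{Z}^u$ becomes trivial upon restriction to $\phi$. As a conceptual cross-check, the Lyndon--Hochschild--Serre sequence for $1\to\phi\to\alpha\to\mathbb{Z}/2\to 1$ reduces the statement to $H^2(\mathbb{Z}/2;\mathbb{Z}^-)=0$ (forcing restriction onto the invariant subgroup) together with triviality of the $\mathbb{Z}/2$-action on $H^1(\phi;\mathbb{Z})$, the latter holding because conjugation by $x_1$ sends each $a_i$ to $a_i^{-1}$ and so acts by $-1$ on $\phi^{ab}$, cancelling the sign on coefficients.
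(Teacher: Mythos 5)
Your proof is correct, but it follows a different route from the paper's. You compute everything globally and explicitly: derivations on $\alpha$ are free on the values $t_i=d(x_i)$ because the sign action kills the constraint from the relators $x_i^2$, the restriction to $\phi$ is the map $(t_i)\mapsto(t_1-t_i)_{i\ge2}$, which is visibly onto, and principal derivations die on $\phi$ since the restricted module is trivial. The paper instead argues structurally: after the same identification of $\phi$ as free on the $y_i=x_1x_i$, it verifies the $k=2$ case (the infinite dihedral group $D$) by a resolution computation, and then for general $k$ uses that each dihedral subgroup $D_i=\langle x, y_i\rangle$ is a retract of $\alpha$ compatibly with $u$, so that restriction hits each summand $H^1(\langle y_i\rangle;\mathbb{Z})$ of $H^1(\phi;\mathbb{Z})$ and hence their sum. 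Your computation buys more: you get the full structure $H^1(\alpha;\mathbb{Z}^u)\cong\mathbb{Z}^{k-1}\oplus\mathbb{Z}/2\mathbb{Z}$ and identify the kernel of restriction as the torsion class of $(1,\dots,1)$, and you avoid the mild subtlety in the paper's last step (surjecting onto each summand of a direct sum does not in general give surjectivity onto the sum; here it works because the classes inflated from $D_i$ vanish on the other generators $y_j$). The paper's retraction argument, on the other hand, is shorter once the dihedral case is known, and its functorial form is reused immediately afterwards, e.g.\ for the remark that restriction maps $H^1(\alpha;\mathbb{Z}^u)$ onto $H^1(\langle z\rangle;\mathbb{Z})$ for $z=\Pi x_i$ when $k$ is even. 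Your Lyndon--Hochschild--Serre cross-check (surjectivity onto invariants from $H^2(\mathbb{Z}/2;\mathbb{Z}^-)=0$, plus triviality of the twisted action on $H^1(\phi;\mathbb{Z})$ because conjugation by $x_1$ inverts each $a_i$) is also sound and is perhaps the cleanest conceptual explanation of why the lemma holds.
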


\begin{proof}
Let $x=x_1$ and $y_i=x_1x_i$ for all $i>1$.
Then $\phi=\mathrm{Ker}(u)$ is free with basis $\{y_2,\dots,y_k\}$
and so $\alpha\cong{F(k-1)\rtimes{Z/2Z}}$.

If $k=2$ then $\alpha$ is the infinite dihedral group $D$ and
the lemma follows by direct calculation with resolutions.
In general, the subgroup $D_i$ generated by $x$ and $y_i$ is an
infinite dihedral group, and is a retract of $\alpha$.
The retraction is compatible with $u$,
and so restriction maps $H^1(\alpha;Z^u)$ onto $H^1(D_i;Z^u)$.
Hence restriction maps $H^1(\alpha;Z^u)$ onto each summand
$H^1(\langle{y_i}\rangle;Z)$ of $H^1(\phi;Z)$, and the result follows.
\end{proof}

In particular, if $k$ is even then $z=\Pi{x_i}$ generates 
a free factor of $\phi$, and restriction 
maps $H^1(\alpha;Z^u)$ onto $H^1(\langle{z}\rangle;Z)$.

Let $S(2_k)$ be the sphere with $k$ cone points of order 2.

\begin{theorem}
Let $B$ be an aspherical $2$-orbifold, 
and let $u:\pi=\pi_1^{orb}(B)\to\mathbb{Z}^\times$ be an
epimorphism with torsion-free kernel $\kappa$. 
Suppose that $\Sigma{B}\not=\emptyset$, and that
$B$ has $r$ reflector curves and $k$ cone points.
Then $H^2(\pi;Z^u)\cong(Z/2Z)^r$ if $k>0$ and
$H^2(\pi;Z^u)\cong{Z}\oplus(Z/2Z)^{r-1}$ if $k=0$.
In all cases $\beta^u(U^2)$ is the unique element of $H^3(\pi;Z^u)$    
which restricts non-trivially to each subgroup of order $2$.
\end{theorem}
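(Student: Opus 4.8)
The plan is to compute both groups from the geometric decomposition of $B$ already used in the proof of Theorem 7, and then to read off the $k$-invariant statement from the top-degree part of that computation. Write $N$ for a regular neighbourhood of $\Sigma B$, so that by Lemma 2 it is a disjoint union of $k$ cone-point discs $D(2)$ and $r$ reflector-curve neighbourhoods $\mathbb{J}\times S^1$, while $B_0=B\setminus\mathrm{int}\,N$ is a compact surface with $k+r$ boundary circles and free fundamental group. Since $B$ is aspherical, $H^*(\pi;Z^u)=H^*(B;\mathcal Z^u)$ for the local system $\mathcal Z^u$ determined by $u$, and I would compute this from the Mayer--Vietoris sequence of $(B_0,N)$ with $B_0\cap N=\partial N$ a union of $k+r$ circles. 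By the description in the proof of Lemma 2, $\mathcal Z^u$ has nontrivial monodromy on every boundary circle coming from a cone point or a $u$-twisted reflector curve, and trivial monodromy on the circles parallel to untwisted reflector curves. (Equivalently one could feed the short exact sequence $0\to Z\to Z[\pi/\kappa]\to Z^u\to0$ and Shapiro's lemma into the long exact sequence, reducing to $H^*(\kappa;Z)$; but the orbifold decomposition keeps the local contributions visible, which is what is needed for the restriction statement.)

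The local pieces are elementary. Each $D(2)$ has $\pi_1^{orb}=Z/2Z$ with $u$ nontrivial, so $H^*(D(2);\mathcal Z^u)=H^*(Z/2Z;Z^-)$ (here $Z^-$ is $Z$ with the generator acting by $-1$), which is $0,\,Z/2Z,\,0,\,Z/2Z$ in degrees $0,1,2,3$. Each $\mathbb{J}\times S^1$ has $\pi_1^{orb}=Z/2Z\times Z$ with the reflection acting by $-1$; a Künneth calculation, carried out separately in the twisted and untwisted cases, gives $H^*$ equal to $0,\,Z/2Z,\,Z/2Z,\,Z/2Z$ in degrees $0,1,2,3$ in \emph{both} cases, and moreover the $H^3$ class restricts isomorphically to the unique order-$2$ subgroup $\langle c\rangle$ (note $Z/2Z\times Z$ has a single involution). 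Because $B_0$ is homotopy one-dimensional and $\partial N$ is a union of circles, the groups $H^{\geq2}(B_0;\mathcal Z^u)$ and $H^{\geq2}(\partial N;\mathcal Z^u)$ vanish, so Mayer--Vietoris collapses in degree $3$ to $H^3(\pi;Z^u)\cong H^3(N;\mathcal Z^u)\cong(Z/2Z)^{k+r}$. Under this identification the restriction map to the $k+r$ conjugacy classes of order-$2$ subgroups (one per cone point and one per reflector curve) is an isomorphism onto $\bigoplus H^3(Z/2Z;Z^-)$. This already gives the uniqueness half of the final assertion: any two classes of $H^3(\pi;Z^u)$ restricting nontrivially to every order-$2$ subgroup coincide.

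For $H^2(\pi;Z^u)$ I would assemble the remaining terms of the sequence,
\[ H^1(B_0;\mathcal Z^u)\oplus H^1(N;\mathcal Z^u)\to H^1(\partial N;\mathcal Z^u)\xrightarrow{\ \partial\ } H^2(\pi;Z^u)\to H^2(N;\mathcal Z^u)\to 0, \]
where $H^2(N;\mathcal Z^u)\cong(Z/2Z)^r$ (only the reflector neighbourhoods contribute) and $H^2(B_0;\mathcal Z^u)=0$. The free part of $H^2$ must come from the image of $\partial$, i.e. the cokernel of the first map, and pinning it down requires $H^1(B_0;\mathcal Z^u)$ and $H^1(\partial N;\mathcal Z^u)$ precisely, together with the parity constraint of Lemma 2 that the number of cone points plus $u$-twisted reflector curves is even. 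I expect this degree-$2$ bookkeeping to be the main obstacle: one must compute the free rank of $H^1(B_0;\mathcal Z^u)$ (involving the genus of $B_0$ and whether $u$ restricts trivially there) and then show that a single cone point ($k>0$) forces the first map to be onto, leaving only torsion $(Z/2Z)^r$, whereas for $k=0$ one free $Z$ survives and the torsion drops to $(Z/2Z)^{r-1}$. The silvered annulus ($k=0$, $r=2$, giving $Z\oplus Z/2Z$) is a useful consistency check, and the outcome should be visibly independent of the split of $r$ into twisted and untwisted curves.

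Finally I would identify $\beta^u(U^2)$ as the distinguished class. For every order-$2$ subgroup $\langle x\rangle$ one has $u(x)=-1$, so $U|_{\langle x\rangle}$ is the nonzero element of $H^1(Z/2Z;\mathbb F_2)$ and $U^2|_{\langle x\rangle}\neq0$ in $H^2(Z/2Z;\mathbb F_2)$; by naturality of the $u$-twisted Bockstein, $\beta^u(U^2)|_{\langle x\rangle}=\beta^u(U^2|_{\langle x\rangle})$ is the generator of $H^3(Z/2Z;Z^-)$. Hence $\beta^u(U^2)$ restricts nontrivially to every order-$2$ subgroup, and by the injectivity of the total restriction map established above it is the \emph{unique} such class, completing the proof.
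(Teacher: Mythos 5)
Your decomposition $B=B_0\cup N$ and the resulting Mayer--Vietoris sequence is essentially the paper's own argument (the paper organizes it as a tree of groups, with the singular neighbourhoods as terminal vertices), and your treatment of degree $3$ is correct and complete --- in fact slightly cleaner than the paper's: where the paper identifies the image of the twisted Bockstein inside $H^2(\pi;\mathbb{F}_2)$, you compute $H^3(\pi;Z^u)\cong(Z/2Z)^{k+r}$ integrally and observe that the total restriction map to the $k+r$ conjugacy classes of involutions is injective. Your local computations (the cohomology of $Z/2Z$ with $Z^-$-coefficients and of $Z/2Z\times Z$ in both the twisted and untwisted cases, including the fact that $H^3$ restricts isomorphically to $\langle c\rangle$) do check out, and the naturality argument identifying $\beta^u(U^2)$ as the distinguished class is the same as the paper's final step. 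So the second half of the theorem is proved.

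The genuine gap is the computation of $H^2(\pi;Z^u)$ --- the first displayed assertion of the theorem --- which you set up but explicitly defer as ``bookkeeping''. Two things are missing, and neither is routine. (i) For $k>0$ you must prove that $H^1(B_0;\mathcal{Z}^u)\oplus H^1(N;\mathcal{Z}^u)\to H^1(\partial N;\mathcal{Z}^u)$ is onto; this is the crux of the paper's proof, supplied there by Lemma 9, and the paper keeps the cone points inside the central vertex group $\nu$ precisely so that Lemma 9 applies: the edge classes are products of cone-point involutions, and restriction from $*^kZ/2Z$ to its infinite dihedral retracts is onto on twisted $H^1$. In your decomposition the central group is free, the cone discs only hit the torsion of $H^1(\partial N)$, and the free part (one $Z$ for each untwisted reflector circle) must be hit by $H^1(B_0;\mathcal{Z}^u)$; whether this happens is exactly the issue, and you give no argument (this is where the presence of a cone point and the parity constraint of Lemma 2 must actually be used). (ii) For $k=0$ your sequence yields only an extension $0\to\mathrm{coker}(f)\to H^2(\pi;Z^u)\to(Z/2Z)^r\to0$, and the asserted answer $Z\oplus(Z/2Z)^{r-1}$ corresponds to a non-split extension: already for the silvered annulus one has $0\to Z\xrightarrow{(2,0)} Z\oplus Z/2Z\to(Z/2Z)^2\to0$. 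So even after computing $\mathrm{coker}(f)\cong Z$, the sequence alone cannot distinguish $Z\oplus(Z/2Z)^{r-1}$ from $Z\oplus(Z/2Z)^r$; your phrasing, in which the cokernel and $H^2(N;\mathcal{Z}^u)$ simply add up with ``the torsion dropping by one'', states the target but provides no mechanism (one needs, e.g., a comparison with $\mathbb{F}_2$-coefficients or an explicit cochain-level chase). Until both points are settled, the first half of the theorem remains unproven.
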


\begin{proof}
Suppose first that $B$ has no reflector curves.
Then $B$ is the connected sum of a closed surface $G$
with $S(2_k)$, and $k$ is even, by Lemma 2.
If $B=S(2_k)$ then $k\geq4$, since $B$ is aspherical.
Hence $\pi\cong\mu*_Z\nu$, 
where $\mu=*^{k-2}Z/2Z$ and $\nu=Z/2Z*Z/2Z$
are generated by cone point involutions.
Otherwise $\pi\cong\mu*_Z\nu$,
where $\mu=*^kZ/2Z$ and $\nu=\pi_1(G\setminus{D^2})$ 
is a non-trivial free group.
Every non-trivial element of finite order in such a generalized 
free product must be conjugate to one of the involutions.
In each case a generator of the amalgamating subgroup is identified with 
the product of the involutions which generate the factors of $\mu$
and which is in $\phi=\mathrm{Ker}(u|_\mu)$.

Restriction from $\mu$ to $Z$ induces an epimorphism from
$H^1(\mu;Z^u)$ to $H^1(Z;Z)$, by Lemma 9,
and so 
\[H^2(\pi;Z^u)\cong{H^2(\mu;Z^u)}\oplus{H^2(\nu;Z^u)}=0,\]
by the Mayer-Vietoris sequence with coefficients $Z^u$.
Similarly,
\[H^2(\pi;\mathbb{F}_2)\cong
{H^2(\mu;\mathbb{F}_2)}\oplus{H^2(\nu;\mathbb{F}_2)},\]
by the Mayer-Vietoris sequence with coefficients $\mathbb{F}_2$.
Let $e_i\in{H^2(\pi;\mathbb{F}_2)}$
$=Hom(H_2(\pi;\mathbb{F}_2),\mathbb{F}_2)$ 
correspond to restriction to the $i$th cone point.
Then $\{e_1,\dots,e_{2g+2}\}$ forms a basis for 
$H^2(\pi;\mathbb{F}_2)\cong\mathbb{F}_2^{2g+2}$,
and $\Sigma{e_i}$ is clearly the only element with nonzero
restriction to all the cone point involutions.
Since $H^2(\pi;Z^u)=0$ the $u$-twisted Bockstein maps
$H^2(\pi;\mathbb{F}_2)$ isomorphically onto $H^3(\pi;Z^u)$,
and so there is an unique possible $k$-invariant.

Suppose now that $r>0$.
Then $B=r\mathbb{J}\cup{B_o}$, where $B_o$ is a connected 2-orbifold 
with $r$ boundary components and $k$ cone points.
Hence $\pi=\pi\mathcal{G}$,
where $\mathcal{G}$ is a graph of groups with underlying graph
a tree having one vertex of valency $r$ with group 
$\nu=\pi_1^{orb}(B_o)$, $r$ terminal vertices,
with groups $\gamma_i\cong\pi_1^{orb}(\mathbb{J})={Z}\oplus{Z/2Z}$,
and $r$ edge groups $\omega_i\cong{Z}$.
If $k>0$ then restriction maps $H^1(\nu;Z^u)$ onto $\oplus{H^1(\omega_i;Z)}$
and then $H^2(\pi;Z^u)\cong\oplus{H^2(\gamma_i;Z^u)}\cong{Z/2Z}^r$.
However if $k=0$ then $H^2(\pi;Z^u)\cong{Z}\oplus(Z/2Z)^{r-1}$.

The Mayer-Vietoris sequence with coefficients $\mathbb{F}_2$
gives an isomorphism 
$H^2(\pi;\mathbb{F}_2)\cong{H^2(\nu;\mathbb{F}_2)}\oplus
(H^2(Z\oplus{Z/2Z};\mathbb{F}_2))^r\cong\mathbb{F}_2^{2r+k}$.
The generator of the second summand of $H^2(Z\oplus{Z/2Z};\mathbb{F}_2)$
is in the image of reduction modulo $(2)$ from $H^2(Z\oplus{Z/2Z};Z^u)$, 
and so is in the kernel of $\beta^u$.
Therefore the image of $\beta^u$ has a basis corresponding to the 
cone points and reflector curves,
and we again find an unique $k$-invariant.
Since $\beta^u(U^2)$ restricts to the generator of $H^3(Z/2Z;Z^u)$ at
each involution in $\pi$, 
we must have $k_1(M)=\beta^u(U^2)$.
\end{proof}

\begin{cor}
If $M$ is a closed $4$-manifold with $\pi_2(M)\cong{Z}$
and $\pi_1(M)\cong\pi^{orb}(B)$ then $P_2(M)\simeq{P_2(M_{st})}$, 
where $M_{st}$ is the standard geometric $4$-manifold with the
same fundamental group.
\qed
\end{cor}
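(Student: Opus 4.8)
The plan is to reduce the claimed equivalence $P_2(M)\simeq P_2(M_{st})$ to the equality of the two $k$-invariants, and then to read that equality off Theorem 11. As noted in \S1, the second Postnikov stage $P_2(M)$ is determined up to homotopy by the algebraic $2$-type $[\pi_1(M),\pi_2(M),k_1(M)]$, and likewise for $M_{st}$. By hypothesis $\pi_1(M)\cong\pi=\pi^{orb}(B)\cong\pi_1(M_{st})$ and $\pi_2(M)\cong Z\cong\pi_2(M_{st})$; since in each case the module structure is given by the natural action $u:\pi\to\mathbb{Z}^\times$, both second homotopy groups are $Z^u$ as $\pi$-modules. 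Hence it suffices to show that $k_1(M)=k_1(M_{st})$ in $H^3(\pi;Z^u)$.

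First I would treat the torsion-free case. If $\pi$ has no element of order $2$ then $\pi=\kappa$ is a $PD_2$-group, so $c.d.\,\pi=2$ and $H^3(\pi;Z^u)=0$; the two $k$-invariants then vanish trivially. In the remaining case, where $\pi$ has torsion, I would combine the two facts recorded just before Theorem 11. On the one hand, Lemma 10.4 of \cite{Hi} applies to any closed $4$-manifold with universal cover homotopy equivalent to $S^2$, and so guarantees that both $k_1(M)$ and $k_1(M_{st})$ restrict non-trivially to every subgroup of order $2$ in $\pi$. On the other hand, Theorem 11 identifies $\beta^u(U^2)$ as the \emph{unique} element of $H^3(\pi;Z^u)$ with this restriction property. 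Applying the uniqueness to each of $M$ and $M_{st}$ forces $k_1(M)=\beta^u(U^2)=k_1(M_{st})$, and the equivalence of $2$-types follows.

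The essential point is uniqueness rather than mere existence of a class restricting non-trivially on each involution; this is exactly the content supplied by Theorem 11, via the Mayer--Vietoris computations of $H^2(\pi;\mathbb{F}_2)$ and $H^2(\pi;Z^u)$ together with the analysis of the twisted Bockstein $\beta^u$. The only routine verification I anticipate is that the isomorphisms $\pi_1(M)\cong\pi_1(M_{st})$ and $\pi_2(M)\cong\pi_2(M_{st})$ can be chosen compatibly, so that $Z^u$ really is the common coefficient module; this is part of the hypothesis that both manifolds realize the pair $(\pi,u)$.
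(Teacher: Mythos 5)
Your proof is correct and takes essentially the same route as the paper: reduce to equality of $k$-invariants (the algebraic $2$-type determines $P_2$), dispose of the torsion-free case via $H^3(\pi;Z^u)=0$, and in the torsion case combine Lemma~10.4 of \cite{Hi} (both $k_1(M)$ and $k_1(M_{st})$ restrict nontrivially to every order-$2$ subgroup) with the uniqueness of such a class to conclude $k_1(M)=\beta^u(U^2)=k_1(M_{st})$. The only slip is a citation number: the uniqueness statement you invoke is Theorem~10 of the paper (the theorem to which this corollary is attached), not Theorem~11, which instead concerns the images of the fundamental classes in $H_4(P;\mathbb{F}_2)$.
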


\section{the image of $[M]$ in $H_4(P_2(M);\mathbb{F}_2)$}

As in \cite{Hi09} it is useful to begin this section
by considering first the simpler case when $u$ is trivial.
The group $\pi$ is then a $PD_2$-group, and so $k_1(M)=0$.
Let $F$ be a closed surface with $\pi_1(F)=\pi$,
and let $P={CP^\infty}\times{F}\simeq\Omega{K(Z,3)}\times{F}$.
The natural inclusion $f_{st}:M_{st}=S^2\times{F}\to{P}$ is 3-connected,
and so it is the second stage of the Postnikov tower for $M_{st}$.

The nontrivial bundle space with this group and action
is the Gluck reconstruction $M_{st}^\tau$.
We may assume that the neighbourhood $N$ of a fibre is a 
product $S^2\times{D^2}$, where $D^2\subset{F}$.
Let $h:M^\tau\to{CP^2\times{F}}\subset{P}$ be the map defined by
$h(m)=f_{st}(m)$ for all $m\in{M\setminus{N}}$
and $h([z_0:z_1],d)=([dz_0:z_1:(1-|d|)z_0],d)$ for all $[z_0:z_1]\in{S^2=CP^1}$
and $d\in{D^2}$.
(The two definitions agree on $S^2\times{S^1}$,
since $\tau([z_0:z_1],u)=([uz_0:z_1],u)$ for $u\in{S^1}$.)
Then $h$ is 3-connected, and so is the second stage of 
the Postnikov tower for $M_{st}^\tau$.

By the K\"unneth Theorem, 
\[H_4(P;\mathbb{F}_2)\cong{H_4(CP^\infty;\mathbb{F}_2)}\oplus
(H_2(CP^\infty;\mathbb{F}_2)\otimes{H_2(F;\mathbb{F}_2))}\cong\mathbb{F}_2^2.\]
Homotopy classes of self-maps of $P$ which induce the identity on $\pi$ 
and $\pi_2$ are represented by maps $(c,f)\mapsto (c.s(f),f)$,
where $s:F\to\Omega{K(Z,3)}$ and we use the loop space multiplication 
on $\Omega{K(Z,3)}$. 
It is not hard to see that these act trivially on $H_4(P;\mathbb{F}_2)$.
Since automorphisms of $\pi$ and $\pi_2$ are realized 
by self-homeomorphisms of $F$ and $CP^\infty$, respectively,
$Aut(P)$ acts trivially on $H_4(P;\mathbb{F}_2)$.

Let $q:P\to{CP^\infty}$ be the projection to the first factor.
Then $qf_{st}$ factors through the inclusion of $CP^1$, and so has degree 0.
On the other hand, 
if $(w,d)$ is in the open subset $U=\mathbb{C}\times{intD^2}$ 
with $z_0\not=0$ and $|d|<1$ then $qh(w,d)=[d:w:1-|d|]$, 
and ${(qh)^{-1}([a:b:1])}=(b/(1+|a|),a/(1+|a|))$.
Hence $qh$ maps $U$ bijectively onto the dense open subset 
$CP^2\setminus{CP^1}$,
and collapses $M_{st}^\tau\setminus{h(U)}=M\setminus{intN}$ onto $CP^1$.
Therefore $qh:M_{st}^\tau\to{CP^2}$ has degree 1.
Thus the images of $[M_{st}]$ and $[M_{st}^\tau]$ 
in $H_4(P_2(M);\mathbb{F}_2)$ are not equivalent under the action of $Aut(P)$.

This is not surprising, as $v_2(M_{st})=0$,
but twisting the neighbourhood of a regular fibre changes the 
{\it mod}-(2) self-intersection number of a section to the bundle,
and so $v_2(M_{st}^\tau)\not=0$.

If $M$ is an $S^2$-orbifold bundle space with exceptional fibres 
then the image of a regular fibre in $H_2(M;\mathbb{F}_2$) is trivial, 
since the inclusion factors through the covering $S^2\to{RP^2}$, 
up to homotopy.
Therefore the {\it mod}-(2) Hurewicz homomorphism is trivial,
and Gluck reconstruction does not change the {\it mod}-(2) 
self-intersection pairing.
In particular, $H^2(\pi;\mathbb{F}_2)\cong {H^2(M;\mathbb{F}_2)}$,
and $v_2(M_{st}^\tau)=v_2(M_{st})$.

Although we cannot expect to detect the effect of twisting through the Wu class,
we may adapt the argument above to $S^2$-orbifold bundles with $u\not=1$.
Then
\[K(\pi,1)\simeq{S^\infty}\times{K(\kappa,1)}/(s,k)\sim(-s,\zeta(k)).\]
(If $\pi$ is torsion-free we do not need the $S^\infty$ factor.)
The antipodal map of $CP^1=S^2$ extends to involutions on $CP^n$ 
given by 
\[[z_0:z_1:z_2:\dots:z_n]\mapsto
[-\overline{z_1}:\overline{z_0}:\overline{z_2}:\dots:\overline{z_n}].\]
(Here only the first two harmonic coordinates change position or sign.)
Since these are compatible with the inclusions of $CP^n$ into $CP^{n+1}$ 
given by $[z_0:\dots:z_n]\mapsto[z_0:\dots:z_n:0]$,
they give rise to an involution $\sigma$ on $CP^\infty=\varinjlim{CP^n}$.
Let 
\[P=CP^\infty\times{S^\infty}\times{K(\kappa,1)}/
(z,s,k)\sim(\sigma(z),-s,\zeta(k)).\]
Then $\pi_1(P)\cong\pi$, $\pi_2(P)\cong{Z^u}$ and $\pi_j(P)=0$ for $j>2$.
We shall exclude the case of $RP^2$-bundle spaces, with
$\pi\cong\kappa\times{Z/2Z}$, as these are well understood.
(The self-intersection number argument does apply in this case.)

\begin{theorem}
Let $\pi$ be a group with an epimorphism $u:\pi\to{Z/2Z}$
such that $\kappa=\mathrm{Ker}(u)$ is a $PD_2$-group, and 
suppose that $\pi$ is not a direct product $\kappa\times{Z/2Z}$.
Let $M_{st}$ be the standard geometric $4$-manifold corresponding 
to $(\pi,u)$ and $P=P_2(M_{st})$.
Then the images of $[M_{st}]$ and $[M_{st}^\tau]$ in $H_4(P;\mathbb{F}_2)$ 
are distinct.
\end{theorem}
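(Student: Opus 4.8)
The plan is to imitate the argument of \S6 for the untwisted case, replacing the projection $q\colon P\to CP^\infty$ (which no longer exists, because of the twist $\sigma$) by a projection onto a \emph{twisted} quotient of $CP^\infty$, and replacing the transparent ``degree onto $CP^2$'' count by a spectral sequence computation that detects the image of $CP^2$ in that quotient. First I would build two $3$-connected maps into $P$. The inclusion $CP^1\hookrightarrow CP^\infty$ is equivariant for the antipodal map $a$ and for $\sigma$, so together with a classifying map for the $S^\infty$-direction it gives a map $f_{st}\colon M_{st}\to P$ which restricts on each fibre to the standard inclusion $S^2=CP^1\hookrightarrow CP^\infty$; hence $f_{st}$ is $3$-connected and exhibits $P\simeq P_2(M_{st})$. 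Exactly as in \S6 I would then modify $f_{st}$ on the neighbourhood $N\cong S^2\times D^2$ of a regular fibre by the cap $([z_0:z_1],d)\mapsto[dz_0:z_1:(1-|d|)z_0]$, which agrees with $f_{st}$ on $\partial N$ after the twist $\tau$ and takes values in $CP^2$, obtaining a $3$-connected map $h\colon M_{st}^\tau\to P$. Since $N$ is disjoint from the exceptional fibres, the double cover is trivial over $N$ and the local picture there is literally that of \S6, so no new difficulty arises at this stage.

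Next I would introduce the detecting space $Q=(CP^\infty)_{hZ/2Z}$, the Borel construction for the action of $Z/2Z$ on $CP^\infty$ through $\sigma$, together with the projection $\rho\colon P\to Q$, $[z,s,k]\mapsto[z,s]$, which forgets the $K(\kappa,1)$-coordinate and is well defined because $\sim$ is compatible with it. There is a fibration $CP^\infty\to Q\to RP^\infty$ whose fibre inclusion $i$ realises the edge homomorphism. Since $\sigma$ acts trivially on $H^*(CP^\infty;\mathbb{F}_2)$, the class $x^2\in H^4(CP^\infty;\mathbb{F}_2)$ is a permanent cycle in the mod-$2$ Serre spectral sequence, so there is a class $y\in H^4(Q;\mathbb{F}_2)$ with $i^*y=x^2$. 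The crucial point is that $y$ is \emph{not} a square: because $\sigma$ induces $-1$ on $\pi_2(CP^\infty)=Z$, the transgression is $d_3(x)=t^3\neq0$, and a short computation gives $H^*(Q;\mathbb{F}_2)\cong(\mathbb{F}_2[t]/t^3)\otimes\mathbb{F}_2[y]$ with $|t|=1$ and $|y|=4$, so that $H^2(Q;\mathbb{F}_2)=\langle t^2\rangle$ and $t^4=0$. This is precisely the feature that lets $y$ see the twisting although the Wu (self-intersection) class, being a square, cannot; in the untwisted case one would instead have $d_3(x)=0$ and $y=x^2$ a square, consistent with the detection by $v_2$ in \S6.

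Finally I would evaluate $\rho^*y\in H^4(P;\mathbb{F}_2)$ on the two fundamental classes. The composite $\rho f_{st}$ factors through the sub-Borel-construction $(CP^1)_{hZ/2Z}\simeq RP^2$ (the antipodal action on $CP^1=S^2$ is free), which is $2$-dimensional, so $\langle\rho^*y,[M_{st}]\rangle=0$. On the other hand $\rho h$ carries the cap with degree one onto $CP^2\subset CP^\infty$ followed by the fibre inclusion, whence $\langle\rho^*y,[M_{st}^\tau]\rangle=\langle i^*y,[CP^2]\rangle=\langle x^2,[CP^2]\rangle=1$. Thus $\rho_*$ separates the two images, and a fortiori the images of $[M_{st}]$ and $[M_{st}^\tau]$ in $H_4(P;\mathbb{F}_2)$ are distinct. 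I expect the main obstacle to be the middle step: proving that $[CP^2]$ is nonzero in $H_4(Q;\mathbb{F}_2)$, equivalently that $x^2$ survives to a non-square class $y$ on the twisted quotient. This spectral-sequence input is what must replace the elementary degree computation that was available when $u$ was trivial, and it is exactly the content that the Wu-class argument cannot supply.
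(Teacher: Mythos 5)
Your proposal is correct and follows essentially the same route as the paper: the same Postnikov model $P$, the same two $3$-connected maps (the diagonal $f_{st}$ and the cap $([z_0:z_1],d)\mapsto[dz_0:z_1:(1-|d|)z_0]$ glued in over a regular fibre), and the same detecting projection, since your Borel construction $Q=(CP^\infty)_{hZ/2Z}$ is precisely the paper's $Q=CP^\infty\times{S^\infty}/(z,s)\sim(\sigma(z),-s)$, with the identical conclusion that $[M_{st}]$ maps to $0$ while $[M_{st}^\tau]$ maps to the image of $[CP^2]$. The one place you go beyond the paper is a welcome addition rather than a different route: your Serre spectral sequence computation ($d_3(x)=t^3$, hence $H^4(Q;\mathbb{F}_2)=\langle{y}\rangle$ with $i^*y=x^2$) makes explicit the nonvanishing of the image of $[CP^2]$ in $H_4(Q;\mathbb{F}_2)$, a fact the paper's concluding diagram chase relies on but never verifies.
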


\begin{proof}
The diagonal map from $S^2$ to ${S^2}\times{S^2}=CP^1\times{S^2}$
determines a 3-connected map $f_{st}:M_{st}\to{P}$ 
by $f_{st}([s,k])=[s,s,k]$.
This is the second stage of the Postnikov tower for $M_{st}$, 
and embeds $M_{st}$ as a submanifold of
${CP^1\times{S^2}\times{K(\kappa,1)}/\sim}$ in $P$.
We again have $H_4(P;\mathbb{F}_2)\cong\mathbb{F}_2^2$,
with generators the images of $[M_{st}]$ and $[CP^2]$.

The projection of $CP^\infty\times{S^\infty}\times{K(\kappa,1)}$
onto its first two factors induces a map 
$g:P\to{Q}=CP^\infty\times{S^\infty}/(z,s)\sim(\sigma(z),-s)$
which is in fact a bundle projection with fibre $K(\kappa,1)$.
Since $gf_{st}$ factors through $S^2$ the image of $[M_{st}]$
in $H_4(Q;\mathbb{F}_2)$ is trivial.
 
Since $\pi$ is not a direct product,
$M_{st}$ is the total space of an $S^2$-orbifold bundle $p_{st}$.
Let $v:S^2\times{D^2}\to{V}\subset{M_{st}}$ be a fibre-preserving
homeomorphism onto a regular neighbourhood of a regular fibre.
Since $V$ is 1-connected $f_{st}|_V$ factors through 
${CP^\infty\times{S^\infty}\times{K(\kappa,1)}}$.
Let $f_1$ and $f_2$ be the composites of a fixed lift of 
$f_{st}v\tau:S^2\times{S^1}\to{P}$ with the projections to $CP^\infty$ 
and $S^\infty$, respectively.
Let $F_1$ be the extension of $f_2$ given by
\[F_2([z_0:z_1],d)=[dz_0:z_1:(1-|d|)z_0]\]
for all $[z_0:z_1]\in{S^2}=CP^1$ and $d\in{D^2}$.
Since $f_2$ maps $S^2\times{S^1}$ to $S^2$ it is nullhomotopic in $S^3$,
and so extends to a map $F_2:S^2\times{D^2}\to{S^3}$.
Then the map $F:M_{st}^\tau\to{P}$ given by $f_{st}$ 
on $M_{st}\setminus{N}$ and $F(s,d)=[F_1(s),F_2(s),d]$ 
for all $(s,d)\in{S^2\times{D^2}}$ is 3-connected,
and so it is the second stage of the Postnikov tower for $M_{st}^\tau$.

Now $F_1$ maps the open subset $U=\mathbb{C}\times{intD^2}$ 
with $z_0\not=0$ bijectively onto its image in $CP^2$,
and maps $V$ onto $CP^2$.
Let $\Delta$ be the image of $CP^1$ under the diagonal embedding in
$CP^1\times{CP^1}\subset{CP^2\times{S^3}}$.
Then $(F_1,F_2)$ carries $[V,\partial{V}]$ to the image of $[CP^2,CP^1]$ in
$H_4(CP^2\times{S^3},\Delta;\mathbb{F}_2)$.
The image of $[V,\partial{V}]$ generates $H_4(M,M\setminus{U};\mathbb{F}_2)$.
A diagram chase now shows that $[M^\tau]$ and $[CP^2]$ have the same
image in $H_4(Q;\mathbb{F}_2)$, and so $[M^\tau]\not=[M]$
in $H_4(P_2(M);\mathbb{F}_2)$.
\end{proof}

It remains to consider the action of $Aut(P)$.
Since $M$ is geometric $Aut(\pi)$ acts isometrically.
The antipodal map on the fibres defines a self-homeomorphism which
induces $-1$ on $\pi_2(M)$.
These automorphisms clearly fix $H_4(P;\mathbb{F}_2)$.
Thus it is enough to consider the action of $G=H^2(\pi;Z^u)$
on $H^2(\pi;Z^u)$.

\begin{cor}
Every $4$-manifold realizing $(\pi,u)$ is homotopy equivalent 
to $M$ or $M^\tau$.
If $B=X/\pi$ has no reflector curves then $M^\tau\not\simeq{M}$.
\end{cor}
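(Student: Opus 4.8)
The plan is to reduce the whole question to the image of the fundamental class in $H_4(P;\mathbb{F}_2)$ and then to control the residual action of $Aut(P)$. First I would invoke the Corollary of Theorem 11: any closed $4$-manifold $M'$ realizing $(\pi,u)$ satisfies $P_2(M')\simeq P$. By Theorem 10.6 of \cite{Hi}, together with the observation of \S1 that the homotopy type is detected by the image of the fundamental class in $H_4(P;\mathbb{F}_2)$ modulo $Aut(P)$, there are at most two homotopy types realizing $(\pi,u)$, distinguished by this image.

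For the first assertion I would identify the two possible values explicitly. By Theorem 10.6 of \cite{Hi} the image of the fundamental class takes at most two values, up to sign and automorphism, and by Theorem 10 these two values correspond to the two ways of attaching the top $4$-cell to $M_o$, namely the bundle space $M=M_{st}$ and its Gluck reconstruction $M^\tau=M_{st}^\tau$. Since these are the only possible values, every manifold realizing $(\pi,u)$ is homotopy equivalent to $M$ or to $M^\tau$.

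For the second assertion I would track the two classes under $Aut(P)$. By Theorem 12 the images of $[M]$ and $[M^\tau]$ in $H_4(P;\mathbb{F}_2)$ are distinct as absolute elements, so it remains to check that no automorphism of $P$ interchanges them. As noted just above, the antipodal map on the fibres and the (isometric) action of $Aut(\pi)$ fix $H_4(P;\mathbb{F}_2)$, so it is enough to control the subgroup $G=H^2(\pi;Z^u)$ of automorphisms acting trivially on $\pi_1$ and $\pi_2$. When $B$ has no reflector curves we have $r=0$; if $B$ has cone points then Theorem 11 gives $H^2(\pi;Z^u)\cong(Z/2Z)^0=0$, so $G$ is trivial and $Aut(P)$ fixes each class. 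The inequality of Theorem 12 then persists modulo $Aut(P)$, and hence $M^\tau\not\simeq M$. The residual possibility $\Sigma B=\emptyset$, in which $\pi$ is a torsion-free $PD_2$-group, is settled exactly as in the $u=1$ computation of \S6: one verifies directly that the self-maps of $P$ representing $G$ act trivially on $H_4(P;\mathbb{F}_2)$.

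The crux is this control of the $Aut(P)$-action, that is, of $G=H^2(\pi;Z^u)$, on $H_4(P;\mathbb{F}_2)$. Theorem 12 yields only the inequality of the absolute classes, and a priori an element of $G$ could interchange them; the no-reflector-curve hypothesis is used precisely to force $G=0$ through Theorem 11. When reflector curves are present $H^2(\pi;Z^u)\neq0$, and one expects $G$ to identify the two classes, in accordance with Corollary B of Theorem 7, which produces a unique bundle, and hence a unique homotopy type, in that case.
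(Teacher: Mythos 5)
Your treatment of the cone-point case coincides with the paper's proof: the paper makes the same reduction of the $Aut(P)$-action to the subgroup $G=H^2(\pi;Z^u)$ of self-equivalences inducing the identity on $\pi_1$ and $\pi_2$, quotes the computation $H^2(\pi;Z^u)\cong(Z/2Z)^r=0$ when $r=0$ and $k>0$, and concludes from the inequality of the absolute classes that $M^\tau\not\simeq{M}$. The genuine gap is in your handling of the remaining case $\Sigma{B}=\emptyset$. There the paper does not argue via $Aut(P)$ at all: for an honest $S^2$-bundle over a surface, Gluck reconstruction changes the mod-$2$ self-intersection number of a section, hence changes the Wu class $v_2(M)$, and this homotopy invariant settles $M^\tau\not\simeq{M}$ at once. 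You instead assert that $G$ acts trivially on $H_4(P;\mathbb{F}_2)$ and that this can be verified ``exactly as in the $u=1$ computation of \S6''. It cannot, at least not verbatim: that computation rests on $P$ being the product $CP^\infty\times{F}$, on the K\"unneth decomposition of $H_4(P;\mathbb{F}_2)$, and on the explicit representatives $(c,f)\mapsto(c\cdot{s(f)},f)$. When $u\neq1$ and $\pi$ is torsion-free, $P$ is a twisted $CP^\infty$-bundle over $K(\pi,1)$, and, crucially, $G=H^2(\pi;Z^u)$ is nonzero there (it is $Z$ or $Z/2Z$ by Poincar\'e duality; the vanishing theorem you invoke assumes $\Sigma{B}\neq\emptyset$, so it gives nothing in this case). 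The statement you want is true, but the proof of it is missing; you must either produce a twisted analogue of the \S6 computation or, as the paper does, bypass $Aut(P)$ entirely with the $v_2$ argument.

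There is also a soft spot in your proof of the first assertion. From ``at most two values of the fundamental class up to sign and automorphisms'' together with the fact that $M$ and $M^\tau$ are both obtained from $M_o$ by attaching a top cell, it does not follow that every realizable value is $[M]$ or $[M^\tau]$: if $[M]$ and $[M^\tau]$ were to lie in a single $Aut(P)$-orbit (as indeed happens when $B$ has a reflector curve), the count would still permit a third manifold realizing a different orbit. The paper closes exactly this loophole by observing that the image of the fundamental class of \emph{any} manifold realizing $(\pi,u)$ must generate $H_4(P;\mathbb{F}_2)$ modulo $[CP^2]$, so the only possible values are $[M]$ and $[M]+[CP^2]=[M^\tau]$, independently of how $Aut(P)$ acts. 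Your cell-attachment route can be repaired (the attaching maps differ by a multiple of $\eta$, normalizable to $0$ or $1$), but that is precisely the unproved parenthetical remark following the theorem on $M_o\cup_f{D^4}$, so as written this step is also incomplete.
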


\begin{proof}
The first assertion holds since the image of the fundamental class
in $H_4(P_2(M);\mathbb{F}_2)$ must generate {\it mod} $[CP^2]$,
and so be $[M]$ or $[M]+[CP^2]$.

If $B$ is nonsingular then Gluck reconstruction changes the
self-intersection of a section,
and hence changes the Wu class $v_2(M)$. 
If $B$ has cone points but no reflector curves then $H^2(\pi;Z^u)=0$,
by Theorem 10, and so $M^\tau\not\simeq{M}$, by Theorem 11.
\end{proof} 

Is there a more explicit invariant?
The $q$-invariant of \cite{KKR} is 0 for every
orbifold bundle with regular fibre $S^2$ over an aspherical base.

A closed 4-manifold $M$ is strongly minimal if 
the equivariant intersection pairing on $\pi_2(M)$ is 0.
Every group $G$ with $c.d.G\leq2$ is the fundamental group of a
strongly minimal 4-manifold, and every closed 4-manifold
with fundamental group $G$ admits a 2-connected degree-1 
map to a strongly minimal 4-manifold \cite{Hi09}.
However, if we allow torsion but assume that $v.c.d.G=2$ and
$G$ has one end then $\pi\cong\kappa\rtimes{Z/2Z}$,
with $\kappa$ a $PD_2$-group, by Theorem 4 of \cite{Hi09}.
When does a closed 4-manifold $N$ with $\pi_1(N)\cong\kappa\rtimes{Z/2Z}$
admit a 2-connected degree-1 map to an $RP^2$-bundle space or to
an $S^2$-orbifold bundle space?

\section{the main result}

We may now summarize our results in the following theorem.

\begin{theorem}
Let $M$ be a closed $4$-manifold with $\pi_2(M)\cong{Z}$,
and let $\kappa=\mathrm{Ker}(u)$, 
where $u:\pi=\pi_1(M)\to{Aut(\pi_2(M))}=\mathbb{Z}^\times$ 
is the natural action.
Then
\begin{enumerate}
\item{if} $\pi=1$ then $M\simeq{CP^2}$;

\item{if} $\pi\cong\kappa\times{Z/2Z}$ then $M$ is homotopy equivalent to the
total space of an $RP^2$-bundle over an aspherical surface
$F\simeq{K(\kappa,1)}$;

\item{if} $\pi\not=1$ and $\pi\not\cong\kappa\times{Z/2Z}$  
then $M$ is an $S^2$-orbifold bundle space over an aspherical
$2$-orbifold $B$ with $\pi^{orb}(B)\cong\pi$.
If $B$ has a reflector curve then $M\simeq{M_{st}}$;
otherwise there are two homotopy types.
\end{enumerate}
\end{theorem}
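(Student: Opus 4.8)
The plan is to assemble Theorem~12 as a summary of the work already done, treating each case of the trichotomy on $\pi$ separately. Part~(1) is immediate: if $\pi=1$ then $M$ is a simply-connected closed 4-manifold with $\pi_2(M)\cong{Z}$, so $H^*(M)$ agrees with that of $CP^2$ and $M\simeq{CP^2}$ by the classification of simply-connected 4-manifolds via the intersection form (the form on $Z$ must be $\langle\pm1\rangle$). Part~(2) is quoted directly from the discussion in \S1: when $\pi\cong\kappa\times{Z/2Z}$, the results recalled there show $M$ is simple homotopy equivalent to an $RP^2$-bundle over $K(\kappa,1)$, and this case is set aside as well understood.

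The substance is Part~(3). First I would establish the structural claim that when $\pi\not=1$ and $\pi\not\cong\kappa\times{Z/2Z}$, the group $\pi$ is $\pi_1^{orb}(B)$ for an aspherical 2-orbifold $B$ whose singular locus consists only of cone points of order~2 and reflector curves; this is exactly the content of the discussion in \S1 (the case $\pi\cong\kappa\rtimes{Z/2Z}$ with $\kappa$ a $PD_2$-group) together with Lemma~2. The homotopy classification then proceeds through the Postnikov-tower machinery: by the Corollary to Theorem~10, $P_2(M)\simeq{P_2(M_{st})}=:P$, so the homotopy type of $M$ is determined by the image of $[M]$ in $H_4(P;\mathbb{F}_2)$ modulo $Aut(P)$. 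By Theorem~10.6 of \cite{Hi} there are at most two possible images (up to sign and automorphism), and the candidates are $[M_{st}]$ and $[M_{st}^\tau]$. The Corollary to Theorem~11 tells us precisely that every realizing 4-manifold is homotopy equivalent to $M_{st}$ or $M_{st}^\tau$, which gives the ``$S^2$-orbifold bundle space'' conclusion since both of these are orbifold bundle spaces by the construction in \S3 (Gluck reconstruction of $p_{st}$).

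It remains to count homotopy types, which is where the reflector-curve dichotomy enters and which I expect to be the genuinely delicate point. When $B$ has a reflector curve, Corollary~B of Theorem~7 forces every $S^2$-orbifold bundle over $B$ to be the standard geometric one, so $M^\tau\cong{M_{st}}$ and there is a single homotopy type; I would note that this is consistent with $H^2(\pi;Z^u)$ having a nontrivial summand absorbing the twist. When $B$ has no reflector curve, I need $M_{st}^\tau\not\simeq{M_{st}}$: this is supplied by Theorem~11 (the images in $H_4(P;\mathbb{F}_2)$ are distinct) combined with the action-of-$Aut(P)$ analysis following it, which reduces the ambiguity to the action of $G=H^2(\pi;Z^u)$, and by Theorem~10 this group vanishes when $k>0,\,r=0$. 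The main obstacle is marshalling these pieces so that ``at most two homotopy types'' (from the $H_4$ count) is upgraded to ``exactly two'' in the reflector-free case and ``exactly one'' in the reflector case, keeping careful track of which automorphisms of $P$ can or cannot identify the two fundamental-class images; the geometric-versus-non-geometric refinement (the hyperelliptic-quotient exception noted in the abstract) I would relegate to the later sections rather than prove here.
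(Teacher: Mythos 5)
Your proposal is correct and follows essentially the same path as the paper's proof: the $k$-invariant and Postnikov stage are fixed by Theorem 10 and its Corollary, Theorem 10.6 of \cite{Hi} bounds the number of homotopy types by two, Theorem 11 and its Corollary realize both classes by the orbifold bundle spaces $M_{st}$ and $M_{st}^\tau$, Corollary B of Theorem 7 collapses the two when $B$ has a reflector curve, and the vanishing of $H^2(\pi;Z^u)$ from Theorem 10 separates them otherwise. The only real deviation is part (1), where you invoke the intersection-form homotopy classification of simply-connected 4-manifolds, whereas the paper factors the classifying map $M\to{P_2(M)}\simeq{CP^\infty}$ through $CP^2$ by general position and concludes via the nonsingularity of Poincar\'e duality; both arguments are sound.
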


\begin{proof}
If $\pi=1$ then $P_2(M)\simeq{CP^\infty}$, 
and the classifying map $f_M:M\to{P_2(M)}$ 
factors through $CP^2$, by general position.
This map induces isomorphisms on cohomology, 
by the nonsingularity of Poincar\'e duality,
and so is a homotopy equivalence.

If $\pi\cong\kappa\times{Z/2Z}$ then $M$ is homotopy equivalent to the
total space of an $RP^2$-bundle over an aspherical surface $F$, 
by Theorem 5.16 of \cite{Hi}.
Clearly $\pi_1(F)\cong\pi$.

If $\pi$ is nontrivial and not a product with $Z/2Z$ then
$k_1(M)$ is determined by $(\pi,u)$, by Theorem 9, and so
there are at most two possible homotopy types,
by Theorem 10.6 of \cite{Hi}.
These are represented by the $S^2$-orbifold bundle spaces 
$M_{st}$ and $M_{st}^\tau$, by Theorem 11.
If moreover $B$ has a reflector curve 
then $M_{st}^\tau$ and $M_{st}$ are diffeomorphic, by Corollary B of
Theorem 7.
Otherwise, $H^2(\pi;Z^u)=0$ and so these orbifold bundle spaces 
are not homotopy equivalent.
\end{proof}

\begin{cor} [A]
Let $M_\kappa$ be the double cover associated to $\kappa$.
If $\pi\not=1$ and $\pi\not\cong\kappa\times{Z/2Z}$
then $M_\kappa\simeq{S^2}\times{K(\kappa,1)}$.
\end{cor}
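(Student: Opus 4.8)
The plan is to leverage the homotopy classification of Theorem 12 and then identify the $\kappa$-covers of the two standard models directly. Since $u$ is the natural action on $\pi_2(M)\cong{Z}$, the subgroup $\kappa=\mathrm{Ker}(u)$ is a homotopy invariant of $M$; any homotopy equivalence preserves it and therefore lifts to a homotopy equivalence of the associated double covers. By Theorem 12, in the case at hand $M\simeq{M_{st}}$ or $M\simeq{M_{st}^\tau}$, so it suffices to show that $(M_{st})_\kappa$ and $(M_{st}^\tau)_\kappa$ are each homotopy equivalent to $S^2\times{K(\kappa,1)}$.

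First I would treat the standard example. Since $M_{st}=S^2\times{K(\kappa,1)}/(s,k)\sim(-s,\zeta(k))$ and the generating deck transformation $(s,k)\mapsto(-s,\zeta(k))$ is free with quotient group $\pi=\kappa\rtimes{Z/2Z}$, the double cover of $M_{st}$ corresponding to $\kappa\leq\pi$ is exactly $S^2\times{K(\kappa,1)}$. Thus $(M_{st})_\kappa=S^2\times{K(\kappa,1)}$ on the nose.

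For the Gluck reconstruction $M_{st}^\tau$ I would argue geometrically. The reconstruction modifies $M_{st}$ in a product neighbourhood $S^2\times{D^2}$ of a single regular fibre, lying over a nonsingular interior point $b$ of $B$. Since $\kappa$ corresponds to the nonsingular double cover $F=K(\kappa,1)\to{B}$, the point $b$ has two preimages in $F$, so the modified region lifts to two disjoint copies of $S^2\times{D^2}$ in $(M_{st})_\kappa=S^2\times{F}$. Hence $(M_{st}^\tau)_\kappa$ is obtained from the product bundle $S^2\times{F}$ by Gluck-twisting at two disjoint regular fibres. A single twist $\tau$ represents the generator of $\pi_1(SO(3))=Z/2Z$ and changes the characteristic class of the (orientation-preserving) $S^2$-bundle in $H^2(F;\mathbb{F}_2)\cong{Z/2Z}$ by the Poincar\'e dual of a point; twisting at two fibres changes it by twice this class, which is $0$. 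Therefore $(M_{st}^\tau)_\kappa\cong{S^2}\times{F}$, and combining the three cases gives $M_\kappa\simeq{S^2}\times{K(\kappa,1)}$.

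The main obstacle I expect is making the two-twist cancellation precise: one must check both that the single twisting region upstairs splits into exactly two disjoint copies (which follows since $b$ is a regular point of $B$) and that the two local contributions cancel in $H^2(F;\mathbb{F}_2)$. An alternative, purely homotopy-theoretic route for $(M_{st}^\tau)_\kappa$ would be to observe that $\kappa$ acts trivially on $\pi_2$ and $c.d.\kappa=2$, so $k_1((M_{st}^\tau)_\kappa)\in{H^3(\kappa;Z)}=0$; then by Theorem 10.6 of \cite{Hi} the cover is one of the two $S^2$-bundles over $F$ with trivial monodromy, and one identifies it as the trivial bundle by computing that $v_2=0$. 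I would cross-check the geometric argument against this.
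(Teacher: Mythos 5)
Your proposal is correct and follows essentially the same route as the paper: the paper's proof likewise observes that the $\kappa$-cover of $M_{st}$ is $S^2\times{K(\kappa,1)}$ on the nose, that the $\kappa$-cover of $M_{st}^\tau$ is obtained from it by Gluck reconstruction at the two lifted fibres, and that these two twists cancel, after which Theorem 12 and lifting of homotopy equivalences finish the argument. Your explicit justification of the cancellation (two twists change the class of the $SO(3)$-bundle in $H^2(F;\mathbb{F}_2)$ by twice the dual of a point, hence by zero) is precisely the step the paper leaves implicit.
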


\begin{proof}
The double cover of $M_{st}$ is ${S^2}\times{K(\kappa,1)}$,
and the double cover of $M_{st}^\tau$ may be obtained from 
this by two Gluck reconstructions.
Hence these covers are homeomorphic.
The second assertion follows.
\end{proof}

The quotient of the total space of any $S^2$-bundle over 
a closed surface $F$ by the fibrewise antipodal involution
is an $RP^2$-bundle over $F$.
Thus the corollary fails if $\pi\cong\kappa\times{Z/2Z}$.

\begin{cor} [B]
If $M$ is orientable and $\pi$ has torsion then $M\simeq{M_{st}}$.
\end{cor}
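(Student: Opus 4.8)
The plan is to deduce the corollary from Theorem 12 and Lemma 2, the crux being to show that the base orbifold $B$ must carry a reflector curve. Since $M$ is orientable its orientation character vanishes, so $w=w_1(M)=0$ as a homomorphism $\pi\to{Z/2Z}$. Because $\pi$ has torsion, $u$ is nontrivial and $\pi/\kappa\cong{Z/2Z}$ with $\kappa$ torsion-free; any torsion element then maps nontrivially to $Z/2Z$ and squares into $\kappa$, so $\pi$ contains an element $x$ of order $2$. In particular $\pi\neq1$, and case (1) of Theorem 12 is excluded.

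Next I would rule out the $RP^2$-bundle case $\pi\cong\kappa\times{Z/2Z}$. If it held, then by Theorem 12(2) $M$ would be homotopy equivalent to the total space $E$ of an $RP^2$-bundle over $K(\kappa,1)$. The fibre $RP^2$ has trivial normal bundle in $E$, so $w_1(E)$ restricts to $w_1(RP^2)\neq0$ on it, whence $E$ is non-orientable. As non-orientability is detected by $w_1$, it is a homotopy invariant, so $M$ itself would be non-orientable, contrary to hypothesis. Therefore $\pi\not\cong\kappa\times{Z/2Z}$, and we are in case (3): $M$ is an $S^2$-orbifold bundle space over an aspherical $2$-orbifold $B$ with $\pi^{orb}(B)\cong\pi$.

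Finally I would invoke Lemma 2. Since $w=0$, no element of order $2$ satisfies $w(x)\neq0$, so $B$ has no cone points; but the order-$2$ element $x$ satisfies $w(x)=0$, so $B$ does have a reflector curve. By the concluding clause of Theorem 12(3), the presence of a reflector curve forces $M\simeq{M_{st}}$, which is the assertion. The proof is short because Theorem 12 and Lemma 2 do the heavy lifting; the only genuine care required is to keep the orientation character $w=w_1(M)$ distinct from the action $u$ on $\pi_2(M)$, and to note that $w(x)=0$ for every order-$2$ element is exactly the orientability hypothesis read through Lemma 2. I anticipate no substantive obstacle beyond this bookkeeping.
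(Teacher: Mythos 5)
Your proposal is correct, and it rests on the same two pillars as the paper's own proof: Lemma 2 and the concluding clause of Theorem 12(3). The difference is in how the reflector curve is produced. You read it off Lemma 2 algebraically: orientability gives $w=w_1(M)=0$, so no order-$2$ element can satisfy $w(x)\neq0$ (hence no cone points), while the order-$2$ element guaranteed by torsion satisfies $w(x)=0$ (hence a reflector curve exists). The paper instead passes to the double cover $M_\kappa\simeq{S^2}\times{F}$ (Corollary A) and argues geometrically: orientability of $M$ forces $F$ to be orientable and the covering involution to be orientation-reversing (the antipodal map on the $S^2$ factor already reverses orientation), so its fixed-point set consists of circles, whence $\Sigma{B}$ is a non-empty union of reflector curves. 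These amount to the same fact, since the $w$-characterization in Lemma 2 is proved by exactly this local analysis of involutions; your version is the more economical, the paper's the more geometric. You are also more careful than the paper on one point: you explicitly exclude $\pi\cong\kappa\times{Z/2Z}$ (via non-orientability of $RP^2$-bundle spaces and homotopy invariance of $w_1$), a hypothesis that is needed before Corollary A or Theorem 12(3) can be invoked but which the paper's terse proof leaves implicit. That bit of bookkeeping is a genuine improvement in completeness rather than pedantry.
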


\begin{proof}
The double cover $M_\kappa$ is an $S^2$-bundle over a surface $F$.
Since $M$ is orientable and $\kappa$ acts trivially,
$F$ must also be orientable and the covering involution of $F$ 
over the base orbifold $B$ must be orientation-reversing.
Since $\pi$ has torsion $\Sigma{B}$ is a non-empty union of reflector curves,
by Lemma 2.
\end{proof}

If $M$ is orientable then the base $B$ is non-orientable.
In fact all $S^2$-orbifold spaces over non-orientable bases are geometric,
by the next result.

\begin{theorem}
Let $B$ be a $\mathbb{X}^2$-orbifold 
and let $u:\pi=\pi^{orb}(B)\to{Z/2Z}$ be an epimorphism 
with torsion-free kernel $\kappa$. 
Then $M_{st}^\tau$ is geometric if and only if either 
$B$ has a reflector curve or $\pi$ is not generated by involutions.
\end{theorem}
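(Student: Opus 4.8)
The plan is to recognise the geometric models in this family as quotients of $S^2\times\mathbb{X}^2$ and to parametrise them by representations into $SO(3)$. Since $\mathrm{Isom}(\mathbb{S}^2\times\mathbb{X}^2)=O(3)\times\mathrm{Isom}(\mathbb{X}^2)$ and $O(3)=SO(3)\times\{\pm I\}$, any free isometric action realising $(\pi,u)$ has the form $g\mapsto(u(g)\phi(g),\rho_X(g))$, where $\rho_X:\pi\to\mathrm{Isom}(\mathbb{X}^2)$ is the holonomy presenting $B=\mathbb{X}^2/\rho_X(\pi)$ and $\phi:\pi\to SO(3)$ is a homomorphism (requiring the determinant of the spherical part to equal $u$ forces exactly this shape). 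As $\rho_X$ is injective, discrete and cocompact, discreteness and cocompactness of the full action are automatic, so the only condition is freeness. An element fixes a point of $\mathbb{X}^2$ iff it is elliptic, i.e. of finite order, and since $\kappa$ is torsion-free every such element is an involution. For an involution $x$ we have $u(x)=-1$, the spherical part is $-\phi(x)$, and this acts freely on $S^2$ iff $\phi(x)$ has no eigenvalue $-1$; as $\phi(x)^2=I$ this forces $\phi(x)=I$ (the antipodal map $a=-I$ being the only free orientation-reversing involution). First I would record this reduction: the geometric manifolds realising $(\pi,u)$ are exactly the $M_\phi=(S^2\times\mathbb{X}^2)/\rho_\phi(\pi)$ with $\phi$ trivial on the involutions, that is, factoring through $\pi/J$, where $J\trianglelefteq\pi$ is the subgroup they generate.

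Next I would decide which homotopy type each $M_\phi$ represents. Each $M_\phi$ is an $S^2$-orbifold bundle over $B$ realising $(\pi,u)$, hence $M_\phi\simeq M_{st}$ or $M_{st}^\tau$ by Theorem 12, with $M_1=M_{st}$. I claim the separating $\mathbb{F}_2$-invariant is the obstruction $w_2(\phi)\in H^2(\pi;\mathbb{F}_2)$ to lifting $\phi$ to $\mathrm{Spin}(3)=SU(2)$: a nontrivial $w_2(\phi)$ corresponds to Gluck reconstruction. This is consistent with Corollary A of Theorem 12, since the double cover of $M_\phi$ is the flat $S^2$-bundle over $F=\mathbb{X}^2/\kappa$ with holonomy $\phi|_\kappa$, and the composite $\kappa\hookrightarrow\pi\to\pi/J$ is induced by the degree-$2$ branched cover $F\to\overline B$, which kills $H^2(-;\mathbb{F}_2)$ (the mod-$2$ degree is $0$); thus $w_2(\phi|_\kappa)=0$ for every admissible $\phi$ and the double cover is always $S^2\times F$, so the twisting detected by $w_2(\phi)$ lives only ``upstairs''. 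Proving that $w_2(\phi)\neq0$ is equivalent to $M_\phi\simeq M_{st}^\tau$ is the crux and the main obstacle. I would establish it by realising a nontrivial $w_2(\phi)$ by a representation supported on a single handle of $\overline B$, where $\phi$ agrees with the data of $M_{st}$ off a disc, and checking that the clutching contributed by the nontrivial class of $\pi_1(SO(3))=\mathbb{F}_2$ is precisely the twist $\tau$ defining $M_{st}^\tau$; alternatively one computes the image of $[M_\phi]$ in $H_4(P;\mathbb{F}_2)$ by the method of \S6.

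With this reduction the two ``short'' cases are immediate. If $\Sigma B$ contains a reflector curve then $M_{st}^\tau\cong M_{st}$ by Corollary B of Theorem 7, so $M_{st}^\tau$ is geometric. If $\pi$ is generated by involutions and $B$ has no reflector curve, then $J=\pi$, so $\phi=1$ is the only admissible representation and $M_{st}$ is the unique geometric model; since $B$ has no reflector curve we have $H^2(\pi;Z^u)=0$ by Theorem 10, so $M_{st}^\tau\not\simeq M_{st}$ by Theorem 11, and therefore $M_{st}^\tau$ is not geometric.

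Finally, suppose $\pi$ is not generated by involutions; we may assume $B$ has no reflector curve, as the reflector case is already settled. Filling in the cone points identifies $\pi/J$ with the fundamental group of the closed surface $\overline B$ underlying $B$, which is nontrivial, so $H^2(\pi/J;\mathbb{F}_2)=H^2(\overline B;\mathbb{F}_2)=\mathbb{F}_2\neq0$. I would then choose $\phi:\pi\to\pi/J\to SO(3)$ realising the nonzero class: for an orientable $\overline B$, send a symplectic pair $a_1,b_1$ on one handle to two rotations by $\pi$ about perpendicular axes (whose quaternionic lifts $\mathbf i,\mathbf j$ satisfy $\mathbf i\mathbf j\mathbf i^{-1}\mathbf j^{-1}=-1$) and the remaining generators to $I$, so that the surface relation fails to lift and $w_2(\phi)\neq0$; the non-orientable case is handled by the analogous representation sending one cross-cap generator to a rotation by $\pi$. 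By the second paragraph $M_\phi\simeq M_{st}^\tau$, exhibiting $M_{st}^\tau$ as a geometric $4$-manifold. Combining the three cases yields the stated equivalence.
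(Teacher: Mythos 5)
Your proposal is correct and is essentially the paper's own argument: your freeness analysis (every involution of $\pi$ must act antipodally on the $S^2$ factor, so geometric models correspond to representations $\phi\colon\pi\to SO(3)$ trivial on involutions) is precisely how the paper shows that when $\pi$ is generated by involutions and $B$ has no reflector curve the only geometric model is $M_{st}$, hence $M_{st}^\tau$ is non-geometric since $H^2(\pi;Z^u)=0$; and your representation with $w_2(\phi)\neq0$ supported on a single handle of $\overline{B}$ is the paper's step of modifying the standard representation on the summand $G$ of $B\cong S((2)_{2k})\#G$, where the two bundles over $G$ are ``distinguished by the representation $\rho$ of $\pi_1(G)$ in $O(3)$'' as in Theorems 10.8 and 10.9 of \cite{Hi}. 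The step you flag as the crux --- identifying a change of $w_2$ of the holonomy with Gluck reconstruction --- is exactly the step the paper also leaves at sketch level (``We may clearly modify the standard representation\dots''), so your treatment matches the paper's in both structure and level of rigor, with the minor difference that your $SU(2)$-lifting formulation handles the case $\overline{B}=RP^2$ uniformly, whereas the paper rewrites it separately as $B\cong S((2)_{2(k-1)})\#P(2,2)$.
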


\begin{proof}
If $\pi$ is torsion-free then all $S^2$-bundle spaces over $B$ are geometric, 
by Theorems 10.8 and 10.9 of \cite{Hi},
while if $\Sigma{B}$ has a reflector curve then $M_{st}^\tau\cong{M_{st}}$,
by Theorem 7.
Therefore we may assume that $\Sigma{B}$
is a non-empty finite set of cone points of order $2$.

If $B$ has no reflector curves and $\pi=\pi^{orb}(B)$ 
is generated by involutions then $B$ 
is the quotient of an orientable surface  
by the hyperelliptic involution.
As involutions have fixed points in $R^2$, 
they must act without fixed points on $S^2$.
Therefore every geometric 4-manifold 
with group $\pi$ is diffeomorphic to $M_{st}$,
and so $M_{st}^\tau$ is not geometric.

If $\pi$ is not generated by involutions
then $B\cong{S((2)_{2k})}\#{G}$, 
where $G$ is a closed surface other than $S^2$.
The action $u$ is trivial on the separating curve of the connected sum,
and so defines an action $u_G$ of $\pi_1(G)$ on $Z$.
The Gluck reconstruction of the standard $S^2$-orbifold bundle over $B$
may be achieved by modifying the $S^2$-bundle over $G$.
If $G$ is aspherical the Gluck reconstruction of the standard bundle over $G$
again has geometric total space, 
and the two bundles realizing the action $u_G$ are 
distinguished by the representation $\rho$ of $\pi_1(G)$ in $O(3)$,
as in Theorems 10.8 and 10.9 of \cite{Hi}.
We may clearly modify the standard representation of $\pi=\pi^{orb}(B)$
to show that $M^\tau$ is also geometric.
Otherwise, $G=RP^2$ and $B\cong{S((2)_{2(k-1)})}\#{P(2,2)}$,
and a similar argument applies.
\end{proof}

%\newpage
\section{the second wu class}

If $M$ is an $S^2$-bundle space (with $\pi$ torsion-free) 
Gluck reconstruction changes the second Wu class $v_2(M)$. 
Similarly, if $M$ is an $RP^2$-bundle space we may change 
$v_2(M)$ by reattaching a product neighbourhood of a fibre.
However we shall show here that $v_2(M)$ is determined by $\pi$
if $M$ is an $S^2$-orbifold bundle space and the base orbifold 
has singularities. 

If $\widetilde{M}\simeq{S^2}$ and $x\in\pi$ has order 2 
then the generator of $\pi_2(M)$ factors through 
$\widetilde{M}/\langle{x}\rangle\simeq{RP^2}$, 
and so the {\it mod}-(2) Hurewicz homomorphism is trivial.
Hence $H^i(\pi;\mathbb{F}_2)\cong{H^i(M;\mathbb{F}_2)}$ for $i\leq2$.

\begin{lemma}
The restriction $Res_\pi^\kappa:H^2(\pi;\mathbb{F}_2)\to
{H^2(\kappa;\mathbb{F}_2)}=\mathbb{F}_2$
is surjective, 
and cup-product with $U$ maps $H^1(\pi;\mathbb{F}_2)$ 
onto $\mathrm{Ker}(Res_\pi^\kappa)$.
\end{lemma}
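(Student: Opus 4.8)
The lemma has two assertions about the restriction map $\mathrm{Res}=\mathrm{Res}_\pi^\kappa:H^2(\pi;\mathbb{F}_2)\to H^2(\kappa;\mathbb{F}_2)\cong\mathbb{F}_2$: first that it is surjective, and second that $\cup U$ maps $H^1(\pi;\mathbb{F}_2)$ onto $\ker(\mathrm{Res})$. The natural framework is the Lyndon–Hochschild–Serre spectral sequence (or equivalently the Gysin/transfer sequence) for the extension $1\to\kappa\to\pi\to Z/2Z\to 1$ with $\mathbb{F}_2$ coefficients. Since $\kappa$ is a $PD_2$-group and $Z/2Z$ acts on $H^2(\kappa;\mathbb{F}_2)=\mathbb{F}_2$ trivially (the action on the top class is by $w_1$, but over $\mathbb{F}_2$ orientation-reversal is invisible), I expect the class in $H^2(\kappa;\mathbb{F}_2)$ to be $Z/2Z$-invariant, so the edge map $\mathrm{Res}$ lands in the invariants $E_\infty^{0,2}$. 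The plan is to show the relevant differential into $E^{0,2}$ vanishes, giving surjectivity, and then to identify $\ker(\mathrm{Res})$ with the image of the inflation-type classes coming from $H^*(Z/2Z;\mathbb{F}_2)=\mathbb{F}_2[U]$.

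\textbf{Surjectivity.} For the first assertion I would argue that the fundamental class of $\kappa$ survives the spectral sequence. The only possibly nonzero differential affecting $E_2^{0,2}$ is $d_3:E_3^{0,2}\to E_3^{3,0}=H^3(Z/2Z;H^0(\kappa;\mathbb{F}_2))$, but $d_2:E_2^{0,2}\to E_2^{2,1}=H^2(Z/2Z;H^1(\kappa;\mathbb{F}_2))$ must be checked to vanish first. An alternative, cleaner route avoids computing differentials: use the transfer map $\mathrm{tr}:H^2(\kappa;\mathbb{F}_2)\to H^2(\pi;\mathbb{F}_2)$, whose composite $\mathrm{Res}\circ\mathrm{tr}$ equals multiplication by $1+\zeta^*$ on the invariant class, which is $2=0$ — so that composite route is inconclusive and I would instead use the double-coset / restriction-corestriction identity $\mathrm{tr}\circ\mathrm{Res}=\cup\,e$ for the Euler-type class, or simply exhibit a geometric class restricting nontrivially: since $H^*(\pi;\mathbb{F}_2)\cong H^*(M;\mathbb{F}_2)$ in low degrees and the double cover $M_\kappa\to M$ induces $\mathrm{Res}$, Poincaré duality on $M_\kappa\simeq S^2\times K(\kappa,1)$ forces the top class of $\kappa$ to pull back from $\pi$. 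This last argument is the one I expect to be most robust.

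\textbf{The image of $\cup U$.} For the second assertion, $\ker(\mathrm{Res})$ consists of classes vanishing on $\kappa$, i.e. the image of inflation $H^2(Z/2Z;\mathbb{F}_2)\to H^2(\pi;\mathbb{F}_2)$ together with the contribution from $E_\infty^{1,1}$. Since $H^*(Z/2Z;\mathbb{F}_2)=\mathbb{F}_2[U]$, inflation sends $U^2\mapsto U^2\in H^2(\pi;\mathbb{F}_2)$, which is exactly $U\cup U=(\cup U)(U)$ with $U\in H^1(\pi;\mathbb{F}_2)$. So I would show $\ker(\mathrm{Res})$ is spanned by $U^2$ and by classes of the form $U\cup y$ with $y\in H^1(\pi;\mathbb{F}_2)$ lifting elements of $H^1(\kappa;\mathbb{F}_2)^{Z/2Z}$; every element of $\ker(\mathrm{Res})$ should then be expressible as $U\cup(\text{something in }H^1)$, giving the surjectivity of $\cup U$ onto the kernel. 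Concretely: a class $c\in H^2(\pi;\mathbb{F}_2)$ restricts to $0$ on $\kappa$ iff it lies in the filtration piece $F^1$, i.e. $c$ is detected in $E_\infty^{1,1}\oplus E_\infty^{2,0}$, and both of these are hit by multiplication by $U$ from $H^1$.

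\textbf{Main obstacle.} The hard part will be verifying that $\cup U$ is \emph{surjective} onto $\ker(\mathrm{Res})$ rather than merely landing inside it — equivalently, ruling out that some kernel class fails to be divisible by $U$. This amounts to showing the edge homomorphism $H^1(Z/2Z;\mathbb{F}_2)\otimes E_\infty^{?} \to F^1/F^2$ is onto, which I expect to reduce to the fact that $\pi$ acts on $H^1(\kappa;\mathbb{F}_2)$ with the invariant/coinvariant subspaces matching up because $\zeta$ is induced by an involution of the surface $K(\kappa,1)$. I would handle this by working case-by-case over the structure of $\pi$ as a graph of groups (as in the proof of Theorem 10), where the $H^1$ computations are explicit, and checking that each generator of $\ker(\mathrm{Res})$ is $U$ times a one-dimensional class.
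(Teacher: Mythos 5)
Your proof of the first assertion (surjectivity of $\mathrm{Res}_\pi^\kappa$) has a genuine gap, and it sits exactly where the paper's key idea is needed. In the LHS spectral sequence the dangerous differential is $d_2^{0,2}\colon E_2^{0,2}\to E_2^{2,1}=H^2(Z/2Z;H^1(\kappa;\mathbb{F}_2))$; you say this ``must be checked to vanish first'' but never check it, and when $H^2(Z/2Z;H^1(\kappa;\mathbb{F}_2))\not=0$ there is no formal reason for it to vanish. (The other differential, into $E^{3,0}$, is harmless: restriction to $\langle{x}\rangle$ splits inflation, so the bottom row survives to $E_\infty$.) Your fallback arguments do not close this: the transfer computation is, as you note, inconclusive mod $2$, and the assertion that ``Poincar\'e duality on $M_\kappa\simeq S^2\times K(\kappa,1)$ forces the top class of $\kappa$ to pull back from $\pi$'' is a claim, not a mechanism --- pullback to a double cover is very often \emph{not} surjective on $\mathbb{F}_2$-cohomology (e.g. $H^2(RP^2;\mathbb{F}_2)\to H^2(S^2;\mathbb{F}_2)$ is zero), so duality alone yields nothing here. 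What the paper actually uses is a quantitative input from the $4$-manifold. Writing $\theta$ for the involution of $V=H^1(\kappa;\mathbb{F}_2)$ induced by conjugation by $x$, and $r=\dim\ker(\theta+1)$, $s=\dim\mathrm{Im}(\theta+1)$, the spectral sequence gives $\beta_1(\pi;\mathbb{F}_2)=1+r$ and $\beta_2(\pi;\mathbb{F}_2)=1+r-s+\delta$, where $\delta=\dim\mathrm{Im}(\mathrm{Res}_\pi^\kappa)$; then the two computations $\chi(M)=2-2\beta_1(\pi;\mathbb{F}_2)+\beta_2(\pi;\mathbb{F}_2)$ (using $H^i(\pi;\mathbb{F}_2)\cong H^i(M;\mathbb{F}_2)$ for $i\leq2$ and Poincar\'e duality on $M$) and $\chi(M)=\chi(\kappa)=2-(r+s)$ (from the double cover $M_\kappa\simeq S^2\times K(\kappa,1)$) force $\delta=1$. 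Some such numerical constraint from $M$ (or, in the orbifold-bundle setting, an explicit geometric class such as the Poincar\'e dual of an exceptional fibre $RP^2\subset M$, whose preimage in $M_\kappa$ is a regular fibre) is indispensable; your outline contains none.

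Your second assertion is in better shape: the filtration idea can be completed, by a route genuinely different from the paper's. Since the bottom row survives, $E_\infty^{2,0}=E_2^{2,0}=\langle{U^2}\rangle$ and $E_\infty^{1,1}=E_2^{1,1}=H^1(Z/2Z;V)$, while $\ker(\mathrm{Res}_\pi^\kappa)=F^1H^2(\pi;\mathbb{F}_2)$. Over $\mathbb{F}_2$ the map $\cup\,t\colon H^0(Z/2Z;V)\to H^1(Z/2Z;V)$ is just the quotient map $\ker(\theta+1)\to\ker(\theta+1)/\mathrm{Im}(\theta+1)$, hence surjective; so by multiplicativity every class in $F^1H^2$ equals $U\cup a$ modulo $F^2H^2=\langle{U^2}\rangle=\langle{U\cup U}\rangle$, which gives the statement (and, notably, independently of part one). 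The ``invariant versus coinvariant'' issue you single out as the main obstacle is vacuous mod $2$, since $\ker(\theta+1)=V^\theta$, so no case-by-case analysis over graphs of groups is needed. The paper argues differently again: a direct cocycle computation shows that if $A(x)=0$ and $U\cup A=0$ then $A|_\kappa\in\mathrm{Im}(\theta+1)$, whence $\dim\ker(U\cup-)\leq s$, and the same $r,s,\delta$ bookkeeping turns this bound into surjectivity onto $\ker(\mathrm{Res}_\pi^\kappa)$. So the second half of your plan is salvageable; the first half, as proposed, is not a proof.
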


\begin{proof}
Let $\theta$ be the automorphism of $H^1(\kappa;\mathbb{F}_2)$ 
given by $\theta(A)(k)=A(xkx)$ for all $A\in{H^1(\pi;\mathbb{F}_2)}$ 
and $k\in\kappa$.
Let $r=\mathrm{dim}_{\mathbb{F}_2}\mathrm{Ker}(\theta+1)$
and $s=\mathrm{dim}_{\mathbb{F}_2}\mathrm{Im}(\theta+1)$.
Then 
$\mathrm{dim}_{\mathbb{F}_2}H^1(Z/2Z;H^1(\kappa;\mathbb{F}_2))=r-s$
and $\beta_1(\kappa;\mathbb{F}_2)=r+s$.
It follows from the LHS spectral sequence that
$\beta_1(\pi;\mathbb{F}_2)=1+r$ and 
$\beta_2(\pi;\mathbb{F}_2)=1+r-s+\delta$,
where $\delta=\mathrm{dim}_{\mathbb{F}_2}\mathrm{Im}(Res_\pi^\kappa)\leq1$.
Since $\chi(M)=2-2\beta_1(\pi;\mathbb{F}_2)+\beta_2(\pi;\mathbb{F}_2)$
and also $\chi(M)=\chi(\kappa)=2-\beta_1(\kappa;\mathbb{F}_2)$,
we see that in fact $\delta=1$.
Therefore $Res_\pi^\kappa$ is surjective.

Certainly $Res_\pi^\kappa(U\cup{A})=0$ for all $A\in{H^1(\pi;\mathbb{F}_2)}$,
and $U^2\not=0$.
Suppose that $A\in{H^1(\pi;\mathbb{F}_2)}$ is such that $A(x)=0$.
If $U\cup{A}=0$ there is a function $f:\pi\to\mathbb{F}_2$ such that
$U(g)A(h)=f(g)+f(h)+f(gh)$ for all $g,h\in\pi$.
If $g\in\kappa$ then $U(g)=0$ and so $f|_\kappa$ is a homomorphism.
Taking $g=x$ we have $A(h)=f(x)+f(h)+f(xh)$, for all $h\in\pi$,
while taking $h=x$ we have $f(gx)=f(g)+f(x)$ for all $g\in\pi$.
In particular, $f(xhx)=f(xh)+f(x)$, for all $h\in\pi$.
Therefore $A(h)=f(h)+f(xhx)$, for all $h\in\pi$,
and so $A\in\mathrm{Im}(\theta+1)$. 
Thus $\mathrm{dim}_{\mathbb{F}_2}\mathrm{Ker}(U\cup-)\leq{s}$,
and so the image of cup-product with $U$ has rank
at least $r-s+1=\mathrm{dim}_{\mathbb{F}_2}\mathrm{Ker}(Res_\pi^\kappa)$.
\end{proof}

If $r>s$ then there are classes $A,B\in{H^1(\pi;\mathbb{F}_2)}$ 
such that $Res_\pi^\kappa(A\cup{B})\not=0$.
However if $r=s$ then $U\cup{H^1(\pi;\mathbb{F}_2)}=\langle{U^2}\rangle$.
The image of $U^3$ in $H^3(M_{st};\mathbb{F}_2)$ is 0, 
since $H^3(RP^2;\mathbb{F}_2)=0$.
Therefore $U^3$ also has image 0 in $H^3(M_{st}^\tau;\mathbb{F}_2)$,
by the Corollary to Theorem 8.
(Can we see this for any 4-manifold $M$ with $\widetilde{M}\simeq{S^2}$
more directly, without invoking Theorem 12?) 
 
\begin{theorem}
Let $p:M\to{B}$ be an $S^2$-orbifold bundle, and suppose that
$\Sigma{B}\not=\emptyset$. 
\begin{enumerate}
\item{If} $Res_\pi^\kappa(A)^2=0$ for all $A\in{H^1(\pi;\mathbb{F}_2)}$ 
and $B$ has a reflector curve then $v_2(M)=0$;
\item{if} $Res_\pi^\kappa(A)^2=0$ for all $A\in{H^1(\pi;\mathbb{F}_2)}$
and $B$ has a cone point of order $2$ then $v_2(M)=U^2$;
\item{if} $Res_\pi^\kappa(A)^2\not=0$ for some $A\in{H^1(\pi;\mathbb{F}_2)}$
then $v_2(M)=UW$. 
\end{enumerate}
\end{theorem}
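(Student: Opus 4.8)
The plan is to compute $v_2(M)$ by using the standard characterization of the Wu class via the formula $v_2(M)\cup x = x^2$ for all $x\in H^2(M;\mathbb{F}_2)$, together with the identification $H^i(\pi;\mathbb{F}_2)\cong H^i(M;\mathbb{F}_2)$ for $i\le 2$ noted just before Lemma~13 (valid because the mod-$(2)$ Hurewicz map is trivial). Since $\dim_{\mathbb{F}_2}H^2(M;\mathbb{F}_2)$ need not be large, the strategy is to pin down $v_2(M)$ by testing it against a spanning set of degree-$2$ classes and invoking Poincar\'e duality. By Lemma~13, every class in $H^2(\pi;\mathbb{F}_2)$ is either in the image of $U\cup(-)$, hence of the form $UW$ for some $W\in H^1(\pi;\mathbb{F}_2)$, or restricts nontrivially to $\kappa$; and $\mathrm{Res}_\pi^\kappa$ is onto. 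I would therefore split the verification according to whether the relevant squares $\mathrm{Res}_\pi^\kappa(A)^2$ vanish, which is exactly the trichotomy of the statement.

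First I would treat the two cases where $\mathrm{Res}_\pi^\kappa(A)^2=0$ for all $A$. In this regime Lemma~13 forces $U\cup H^1(\pi;\mathbb{F}_2)=\langle U^2\rangle$ (the $r=s$ case), so the only candidates for $v_2$ that square-test nontrivially against $U^2$ are $0$ and $U^2$; the Wu relation reduces to deciding the single value $v_2(M)\cup U^2$ against $U^2\cup U^2=U^4$, which vanishes since $U^3=0$ in $M$ (established in the remark after Lemma~13, using $U^3=0$ in $M_{st}$ and the Corollary to Theorem~8). To distinguish $v_2=0$ from $v_2=U^2$ I would pass to the local model near a singular fibre. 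If $B$ has a reflector curve, then by Corollary~B of Theorem~7 $M$ is the standard geometric bundle $M_{st}$, and the geometry $\mathbb{S}^2\times\mathbb{X}^2$ gives a stably parallelizable structure on the relevant piece forcing $v_2=0$; if instead $B$ has a cone point of order $2$, I would compute the self-intersection of a section of the local model $E(2)$ (equivalently, track the mod-$(2)$ normal data of an $RP^2\subset \widetilde M/\langle x\rangle$) to get $v_2(M)=U^2$. The cone-point computation is where I expect the real work: one must show the cone-point contribution to $v_2$ is precisely $U^2$ and not $0$, which means understanding the tangential behaviour of $M$ along an exceptional fibre $RP^2$ and its nontrivial normal bundle.

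For the third case, where some $A$ has $\mathrm{Res}_\pi^\kappa(A)^2\neq 0$ (the $r>s$ case of Lemma~13), I would argue that $v_2(M)=UW$, where $W=w_1(M)$ viewed in $H^1(\pi;\mathbb{F}_2)$. Here the natural tool is the Wu-class relation $v_2=w_2+w_1^2$ combined with control of $w_1$ and $w_2$ from the bundle structure: restricting to $\kappa$, where $M_\kappa\simeq S^2\times K(\kappa,1)$ (Corollary~A of Theorem~12), kills the $U$-divisible part and lets me read off the $\kappa$-component of $v_2$ from the surface $F=K(\kappa,1)$, while the complementary information is carried by $U$. Concretely I would verify the Wu relation $v_2(M)\cup x=x^2$ separately on the subspace $\mathrm{Im}(U\cup-)=\langle U^2\rangle$ and on classes detected by $\mathrm{Res}_\pi^\kappa$, checking that $UW$ satisfies both. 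The main obstacle across all three cases is the same: there is no single global bundle computation, so one must assemble $v_2$ from the local singular-fibre models ($E(2)$, $\mathbb{E}$, $\mathbb{E}'$) and the double cover $M_\kappa$, and the delicate point is correctly identifying the mod-$(2)$ self-intersection contributed by each exceptional fibre and reconciling it with the cup-product structure supplied by Lemma~13.
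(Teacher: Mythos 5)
Your overall frame (the Wu relation $v_2\cup x=x^2$, the identification $H^2(\pi;\mathbb{F}_2)\cong H^2(M;\mathbb{F}_2)$, Lemma 13, and the self-intersection of the exceptional fibre in the cone-point case) matches the paper, but two of your three cases rest on arguments that fail. In case (1), your claim that the geometry of $M_{st}$ gives ``a stably parallelizable structure forcing $v_2=0$'' is false: geometric manifolds in this very family can have $v_2\not=0$. Indeed, by part (2) of the theorem itself the standard (geometric) bundle over a base with cone points has $v_2=U^2\not=0$, and the nontrivial $RP^2$-bundle spaces are geometric with $v_2\not=0$. What the paper actually uses is that a fibre $\sigma$ over a point of a reflector curve has self-intersection $0$ (it moves in a one-parameter family of disjoint parallel copies along the curve), so the Poincar\'e dual $\phi$ of $\sigma_*[RP^2]$ satisfies $\phi^2=0$; combined with $v_2\cup UA=(UA)^2=U^2A^2=U^3B=0$ (since the hypothesis and Lemma 13 give $A^2\in\mathrm{Ker}(Res_\pi^\kappa)=U\cup H^1(\pi;\mathbb{F}_2)$, say $A^2=UB$) and nonsingularity of Poincar\'e duality, this forces $v_2=0$. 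Your cone-point computation is the paper's (though the relevant class is the exceptional fibre, not a section).

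In case (3) your plan does not close. First, the identity $\mathrm{Im}(U\cup -)=\langle U^2\rangle$ you invoke holds only in the regime of cases (1)--(2); it fails precisely when some $Res_\pi^\kappa(A)^2\not=0$. More seriously, ``verifying the Wu relation for $UW$'' against a spanning set requires evaluating products such as $(UB)^2$, $UA^3$ and $A^4$, which you have no means to compute; the Wu/duality argument by itself only pins $v_2$ down to $UW+\delta U^2$ with $\delta\in\{0,1\}$ depending on exactly these products. The paper's key idea, absent from your proposal, is to determine $\delta$ by passing to suitable 2-fold covers: the cover associated to $\mathrm{Ker}(W)$ if $B$ has a reflector curve, or the cover on which $U=W$ if $B$ has cone points. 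On these covers the kernel of the action becomes an orientable surface group, so case (1) or (2) applies; since $W$ restricts there to $0$ (respectively to $U$), naturality of $v_2=w_2+w_1^2$ under covering maps yields $\delta U^2=0$ (respectively $(1+\delta)U^2=U^2$), whence $\delta=0$ and $v_2=UW$. Without this reduction of case (3) to cases (1)--(2), your argument is incomplete.
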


\begin{proof}
If $A\in{H^1(\pi;\mathbb{F}_2)}$ then $U^2\cup{UA}=0$, since $U^3=0$.
Let $\sigma:RP^2\to{M}$ be an exceptional fibre.
Then $U^2(\sigma_*[RP^2])=1$, 
and so $H^2(\pi;\mathbb{F}_2)$ is generated by 
$U\cup{H^1(\pi;\mathbb{F}_2)}$ and
the Poincar\'e dual of $\sigma_*[RP^2]$, by Lemma 13.
(In particular, 
$\phi$ has nonzero restriction to $H^2(\kappa;\mathbb{F}_2)$.)

If $\sigma$ is a fibre over a point on a reflector curve of $B$ 
then it has self-intersection 0, and $\phi^2=0$.
If $\sigma$ is a fibre over a cone point of order 2
it has a regular neighbourhood isomorphic to $E(2)$. 
Let $\sigma_t[\pm{s}]=[s,(tx,ty)]\in{E(2)}$
for $s=(x,y,z)\in{S^2}$ and $|t|<1$.
Then $\sigma=\sigma_0$ and $\sigma_t$ is an isotopy of 
embeddings with $\sigma_t.\sigma_0=1$ if $t\not=0$.
In this case $\sigma$ has self-intersection 1, and $\phi^2\not=0$.

Suppose that $Res_\pi^\kappa(A^2)=0$
for all $A\in{H^1(\pi;\mathbb{F}_2)}$. 
Then $v_2(M)=0$ (if $\phi^2=0$) or $U^2$ (otherwise),
by the nonsingularity of Poincar\'e duality.
The first two assertions now follow.

On the other hand, 
if there is an $A\in{H^1(\pi;\mathbb{F}_2)}$ with 
$Res_\pi^\kappa(A)^2\not=0$ then $H^2(\pi;\mathbb{F}_2)$ 
is generated by $U\cup{H^1(\pi;\mathbb{F}_2)}$ and $A^2$.
Since $Res_\pi^\kappa(W)=w_1(\kappa)$,
we then have $Res_\pi^\kappa(A)^2=Res_\pi^\kappa(AW)$.
In particular, $w_1(\kappa)\not=0$ and so $W\not=0$ or $U$.
Poincar\'e duality now gives 
$v_2(M)=UW+\delta{U^2}$, where $\delta=0$ (if $A^3=0$
or if $UA^3$ and $A^4$ are nonzero) or 1 (otherwise).
We may determine $\delta$ by passing to suitable 2-fold covers.

If $B$ has a reflector curve then so does the 2-fold cover $B^+$ 
associated to $\mathrm{ker}(W)$, and so 
$v_2(M^+)=\delta{U^2}$ must be 0, by (1).
If $B$ has cone points we consider instead the
covering spaces $M_V$ and $B_V$ on which $U=W$.
The orbifold $B_V$ now has cone points, and so $v_2(M_V)=(1+\delta)U^2$
must be $U^2$, by (2).
In each case $\delta$ must be 0, and so $v_2(M)=UW$.
\end{proof}

If $\kappa$ is orientable then $Res_\pi^\kappa(A^2)=0$
for all $A\in{H^1(\pi;\mathbb{F}_2)}$.
However the converse is false: if $\pi=Z*_ZD=\pi^{orb}(P(2,2))$
then $\kappa=\pi_1(Kb)$ is non-orientable but
$Res_\pi^\kappa(A^2)=0$ for all $A\in{H^1(\pi;\mathbb{F}_2)}$.
Is it easy to see directly that if $B$ 
has both cone points and reflector curves then 
this condition does not hold?

Whereas regular fibres in an $S^2$-orbifold bundle 
over a connected base are isotopic, 
exceptional fibres over distinct components 
of the singular locus of $B$ are usually not even homologous.
An arc $\gamma$ in $B$ connecting two such components is
in fact a reflector interval, and the
restriction of the fibration over $\gamma$ 
has total space $RP^3\#RP^3$.
The fibres over the reflector points represent independent
generators of $H_2(RP^3\#RP^3;\mathbb{F}_2)$.
Thus it should not be surprising that fibres over reflector curves have
self-intersection 0, 
whereas fibres over cone points have self-intersection 1.

The calculation of $v_2(M)$ when $\pi=(Z\oplus(Z/2Z))*_ZD$ 
given in Theorem 10.16 of \cite{Hi} is wrong. 
(In fact $Res_\pi^\kappa(S^2)\not=0$, in the notation of \cite{Hi}.) 

\section{$\mathbb{S}^2\times\mathbb{E}^2$-manifolds}

In this section we shall assume that $M$ is a closed 4-manifold with
$\chi(M)=0$ and $\pi_2(M)\cong{Z}$
(equivalently, that $\pi$ is virtually $Z^2$).
In Chapter 10 of \cite{Hi} it is shown that there are between 
21 and 24 possible homotopy types of such 4-manifolds. 
Ten are total spaces of $S^2$-bundles over $T$ or $Kb$, 
four are total spaces of $RP^2$-bundles,
and four are mapping tori of self-homeomorphisms of $RP^3\#{RP^3}$.
These bundle spaces are all $\mathbb{S}^2\times\mathbb{E}^2$-manifolds, 
and their homotopy types are detected by the fundamental groups
and Stiefel-Whitney classes.

The uncertainty relates to the three possible fundamental groups
with finite abelianization.
In each case, the action is unique up to an automorphism of the group.
There is one geometric manifold for each of the groups $D*_ZD$
and $(Z\oplus(Z/2Z))*_ZD$, and two for $Z*_ZD$.
By Theorem 13 there is  one other (non-geometric)
orbifold bundle over $S(2,2,2,2)$ (with group $D*_ZD$),
and these five homotopy types are distinct.
Thus there are in fact 23 homotopy types in all.

If $M$ is an orbifold bundle over a flat base then
it follows from Lemma 2 that either 
\begin{enumerate}
\item $M$ is an $S^2$- or $RP^2$-bundle over $T$ or $Kb$; or

\item $B=\mathbb{A}$ or $\mathbb{M}b$; or

\item $B=S(2,2,2,2)$, $P(2,2)$ or $\mathbb{D}(2,2)$.
\end{enumerate}

There are two $S^2$-orbifold bundles with base 
$S(2,2,2,2)=D(2,2)\cup{D(2,2)}$.
The double of $E(2,2)$ is geometric,
whereas $E(2,2)\cup_\tau{E(2,2)}$ is not.

There is just one $S^2$-orbifold bundle with base $\mathbb{D}(2,2)$.
It has geometric total space.

The orbifold $P(2,2)=D(2,2)\cup{Mb}$ is the quotient of 
the plane $\mathbb{R}^2$ by the group of euclidean isometries 
generated by the glide-reflection $t=(\frac12\mathbf{j},
 \left(\begin{smallmatrix}
-1&0\\
0&1
\end{smallmatrix}\right) ) $ and the rotation
$x=(\frac12(\mathbf{i}+\mathbf{j}),-I)$.
There are two $S^2$-orbifold bundles with base $P(2,2)$.
If we fix identifications of $\partial{Mb}$ with $S^1$ and
$\partial{E(2,2)}$ with $S^2\times{S^1}$ then one has total space 
$M=E(2,2)\cup{S^2}\times{Mb}$ and the other has total space 
$M'=E(2,2)\cup_\tau{S^2}\times{Mb}$.
(The bundles with total space $E(2,2)\cup_{(\tau)}S^2\tilde\times{Mb}$ 
are each equivalent to one of these via the automorphism of the base induced
by reflection of $\mathbb{R}^2$ across the principal diagonal.)

The total spaces of these two $S^2$-orbifold bundles are the
two affinely distinct $\mathbb{S}^2\times\mathbb{E}^2$-manifolds
with fundamental group $Z*_ZD$.
Let $T=(\theta,t)$ and $X=(a,x)$, where $\theta=\pm1\in{S^1}$. 
(Equivalently, $\theta=I_3$ or $R_\pi=diag[-1,-1,1]\in{O(3)}$.)
Then $\{t,x\}$ generates a free, discrete, cocompact isometric action 
of $Z*_ZD$ on $S^2\times{R}^2$.
The subgroup $\kappa\cong{Z}\rtimes_{-1}Z$ is generated by $T$ and $(XT)^2$.
These manifolds are not homotopy equivalent, by Theorem 12.

\section{surgery}

If $\pi$ is a surface group or has a surface group  as an
index-2 subgroup then $Wh(\pi)=0$, by Theorem 6.1 of \cite{Hi}.
Therefore homotopy equivalences of manifolds with such 
fundamental groups are simple.

Let $M$ be a closed 4-manifold with $\pi_2(M)\cong{Z}$ and $\chi=0$.
Then there are nine possibilities for $\pi$.
The relevant surgery obstruction groups
can be computed (or shown to be not finitely generated) in most cases,
via the Shaneson-Wall exact sequences and the results of \cite{CD} 
on $L_n(D,w)$.
L\"uck has settled the one case in which such reductions do not easily apply
\cite{Lu10}.
(The groups $L(\pi)\otimes\mathbb{Z}[\frac12]$ are computed for all 
aspherical 2-orbifold groups $\pi$ when $w$ is trivial in \cite{LS00}.)

If $\pi\cong{Z^2}$ or $Z\rtimes_{-1}Z$ then $M$ is homeomorphic 
to the total space of an $S^2$-bundle over $T$ or $Kb$.
(See Theorem 6.11 of \cite{Hi}.)
If $\pi\cong{Z^2\times{Z/2Z}}$ then $|S_{TOP}(M)|=8$,
while if $\pi\cong{Z\rtimes_{-1}Z}\times{Z/2Z}$
then $8\leq|S_{TOP}(M)|\leq32$.
(See Theorems 6.13 and 6.14 of \cite{Hi}.)
If $\pi\cong{D}\times{Z^-}$ then $L_1(\pi,w)=0$ and $|S_{TOP}(M)|\leq16$.

In each of the remaining cases the structure sets are infinite.
Let $\sigma$ be the automorphism of $D=Z/2Z*Z/2Z$ 
which interchanges the factors.
Let $I_\pi:\pi/\pi'\to{L_1(\pi)}$ be the natural transformation described in
\S6.2 of \cite{Hi}. Then we have

\begin{enumerate}
\item $L_1(D\times{Z})\cong{L_1(D)}\cong{Z^3}$ \cite{CD}.
The direct summand $L_1(Z)\cong{Z}$ is the image of $I_\pi$.
 
\item $L_1(D\rtimes_\sigma{Z})\cong\mathrm{Ker}(1-L_0(\sigma))\cong{Z^2}$. 
The direct summand $L_1(Z)\cong{Z}$ is the image of $I_\pi$.

\item $L_1(D\rtimes_\sigma{Z^-},w)\cong\mathrm{Ker}(1+L_0(\sigma))\cong{Z}$.

\item $D*_ZD$ retracts onto $D(-,-)=Z/2Z^-*Z/2Z^-$, 
compatibly with $w$. Hence $L_1(\pi,w)$ is not finitely generated \cite{CD}.

\item $(Z\oplus(Z/2Z))*_ZD$ retracts onto $D(-,-)=Z/2Z^-*Z/2Z^-$, 
compatibly with $w$. Hence $L_1(\pi,w)$ is not finitely generated \cite{CD}.

\item 
$L_1(Z*_ZD,w)$ has an infinite $UNil$ summand, 
of exponent 4 \cite{Lu10}.
(However $Z*_ZD$ does not surject to $D$.)
\end{enumerate}

In order to estimate the number of homeomorphism types
within each homotopy type we must consider the 
actions of the groups $E(M)$ of homotopy classes of 
self-homotopy equivalences.
(The image of $I_\pi$ acts trivially in $S_{TOP}(M)$,
by Theorem 6.7 of \cite{Hi}.)

Let $M$ be a closed 4-manifold with $\widetilde{M}\simeq{S^2}$.
As observed above, if $M$ is the total space of an orbifold bundle
then $Aut(\pi)$ and $Aut(\pi_2(M))$ act on $M$ via homeomorphisms.
Thus in order to understand the action of $E(M)$ on $S_{TOP}(M)$
it is sufficient to consider the action of the subgroup $K_\pi(M)$ 
of self-homotopy equivalences which induce the identity
on $\pi$ and $\pi_2(M)$.
(Note also that if $f:M\to{M}$ is a self-map such that $\pi_2(f)=id$ then
lifts of $f$ to $\widetilde{M}$ are homotopic to the identity, 
and so $\pi_k(f)=id$ for all $k\geq2$.)

We may assume that $M_o=M\setminus{intD^4}$ is homotopy equivalent 
to a 3-complex.
Fix a basepoint $*\in{M_o}$.
Let $P_3(M)=M\cup{e^{\geq5}}$
be the 3-stage of the Postnikov tower for $M$.
(Thus $\pi_i(M)\cong\pi_i(P_3(M))$ for $i\leq3$ and $\pi_j(P_3(M))=0$ for all $j>3$).
If $(X,*)$ is a based space let $E_*(X)$ be the group 
of based homotopy classes of based self-homotopy equivalences. 
If $f\in{E_*(M)}$ is in the kernel of the natural homomorphism from 
$E_*(M)$ to $E_*(P_3(M))$ then we may assume that $f|_{M_o}$ is the identity,
by cellular approximation.
Thus $f$ differs from $id_M$ by at most a pinch map corresponding to
$\eta{S\eta}\in\pi_4(\widetilde{M})=Z/2Z$.

Let $K_\#$ be the kernel of the natural homomorphism 
from $E_*(P_3(M))$ to $\Pi_{j\leq3}{Aut(\pi_j)}$.
Let $P=P_2(M)$ be the 2-stage of the Postnikov tower for $M$.
Then $K_\#(M)$ maps onto $K_\#$, with kernel of order $\leq2$.
There is an exact sequence
\begin{equation*}
\begin{CD}
H^1(\pi;Z^u)@>\Delta>>{H^3(P;\mathbb{Z})}\to{K_\#}
\to{H^2(\pi;Z^u)}@>\rho>>{H^3(P;\mathbb{Z})},
\end{CD}
\end{equation*}
and the image of ${H^3(P;\mathbb{Z})}$ under the second homomorphism is central.
The homomorphism $\Delta$ involves the second $k$-invariant
$k_2(M)\in{H^4(P;\mathbb{Z})}$ 
and factors through the finite group $H^3(\pi;\mathbb{Z})$.
The kernel of $\rho$ is the isotropy subgroup of $k_2(M)$ 
under the action of $H^2(\pi;Z^u)$ on $P$.
(See Corollary 2.9 of \cite{Ru92}.)

Since $v.c.d.\pi=2$ spectral sequence arguments show that 
$H^i(\pi;Z^u)$ is commensurable 
with $H^0(Z/2Z;H^i(\kappa;\mathbb{Z})\otimes{Z^u})$, for all $i$,
and $H^3(P;\mathbb{Z})$ is commensurable with $H^1(\pi;\mathbb{Z})$.
Thus $K_\#$ is a finitely generated, nilpotent group.
In particular, if $\pi/\pi'$ is finite then $K_\#$ is finite, 
and so there are infinitely many homeomorphism types within 
each such homotopy type.

However, if $\pi\cong{D}\times{Z}$ or $D\rtimes{Z}$ then $K_\#$ is infinite,
and it is not clear how this group acts on $S_{TOP}(M)$.

\section{surface bundles over $RP^2$}

Let $F$ be a closed aspherical surface and 
$p:M\to{RP^2}$ be a bundle with fibre $F$,
and such that $\pi_2(M)\cong{Z}$.
(This condition is automatic if $\chi(F)<0$.) 
Then $\pi=\pi_1(M)$ acts nontrivially on $\pi_2(M)$.
The covering space $M_\kappa$ associated to the kernel $\kappa$ 
of the action is an $F$-bundle over $S^2$,
and so $M_\kappa\cong{S^2}\times{F}$,
since all such bundles are trivial.
The projection admits a section if and only if $\pi\cong\kappa\rtimes{Z/2Z}$. 

The product $RP^2\times{F}$ is easily characterized.

\begin{theorem}
Let $M$ be a closed $4$-manifold with fundamental group $\pi$,
and let $F$ be an aspherical closed surface.
Then the following are equivalent.
\begin{enumerate}
\item$M\simeq{RP^2}\times{F}$;

\item$\pi\cong{Z/2Z}\times\pi_1(F)$, $\chi(M)=\chi(F)$ and $v_2(M)=0$;

\item$\pi\cong{Z/2Z}\times\pi_1(F)$, $\chi(M)=\chi(F)$ and $M\simeq{E}$,
where $E$ is the total space of an $F$-bundle over $RP^2$.
\end{enumerate}
\end{theorem}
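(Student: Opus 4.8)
The plan is to prove the three-way equivalence by establishing the cycle $(1)\Rightarrow(3)\Rightarrow(2)\Rightarrow(1)$, since $(1)\Rightarrow(3)$ and $(1)\Rightarrow(2)$ are essentially immediate. For $(1)\Rightarrow(3)$ and $(1)\Rightarrow(2)$: if $M\simeq RP^2\times F$ then $\pi\cong Z/2Z\times\pi_1(F)$ is clear, $\chi(M)=\chi(RP^2)\chi(F)=\chi(F)$, and since $v_2(RP^2\times F)=v_2(RP^2)\otimes 1 + 1\otimes v_2(F)$ vanishes (both $RP^2$ and the aspherical surface $F$ have $v_2=0$), we get $v_2(M)=0$; moreover $RP^2\times F$ is visibly the total space of the product $F$-bundle over $RP^2$, giving $(3)$.

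The substantive implication is $(3)\Rightarrow(2)$, which reduces to showing $v_2(M)=0$ under the bundle hypothesis, and then $(2)\Rightarrow(1)$, which is the homotopy classification. First I would handle $(3)\Rightarrow(2)$: given $\pi\cong Z/2Z\times\pi_1(F)$ with $\kappa=\pi_1(F)=\mathrm{Ker}(u)$ a $PD_2$-group, the manifold $M$ falls squarely into the $RP^2$-bundle case discussed at the end of \S1, where for each $\kappa$ there are exactly two bundles distinguished by whether $v_2(M)=0$. The point is to rule out the nonzero case under the extra hypothesis that $M$ is a genuine $F$-bundle over $RP^2$ (rather than merely an $RP^2$-bundle over $F$). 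Here the direction of the bundle projection is what forces $v_2(M)=0$: the fibre inclusion $F\hookrightarrow M$ and the structure of the $F$-bundle over $RP^2$ pin down the tangential data. I would compute $v_2$ using the Wu formula $v_2\cup x=\mathrm{Sq}^2 x$ on $H^2(M;\mathbb{F}_2)$ together with $H^i(\pi;\mathbb{F}_2)\cong H^i(M;\mathbb{F}_2)$ for $i\le 2$, and check that the self-intersection obstruction vanishes because a section of the $F$-bundle (or the base $RP^2$ itself) embeds with trivial normal data.

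For $(2)\Rightarrow(1)$ I would invoke the homotopy classification machinery already developed. Since $\pi\cong Z/2Z\times\pi_1(F)=\kappa\times Z/2Z$, this is precisely the excluded product case: by the discussion following Theorem~12 and by Theorem~5.16 of \cite{Hi}, $M$ is homotopy equivalent to the total space of an $RP^2$-bundle over $F$, and the two such bundles are distinguished exactly by $v_2$. The hypothesis $v_2(M)=0$ then singles out the bundle whose total space is $RP^2\times F$; the constraint $\chi(M)=\chi(F)$ confirms we are in the correct component of the classification. Assembling these gives the homotopy equivalence $M\simeq RP^2\times F$.

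The main obstacle I anticipate is the implication $(3)\Rightarrow(2)$, specifically verifying that the $F$-bundle-over-$RP^2$ hypothesis forces $v_2(M)=0$ rather than leaving both values possible. The two $RP^2$-bundles over $F$ both have $\pi\cong\kappa\times Z/2Z$ and the same Euler characteristic, so neither $\pi$ nor $\chi$ distinguishes them; the entire weight of the argument rests on extracting the vanishing of the Wu class from the specific geometry of the fibration, and I expect the careful bookkeeping of normal bundles and the $\mathrm{Sq}^2$ computation on $H^2(M;\mathbb{F}_2)$ to be the delicate step.
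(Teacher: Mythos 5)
Your implications $(1)\Rightarrow(2)$, $(1)\Rightarrow(3)$ and $(2)\Rightarrow(1)$ agree with the paper; in particular $(2)\Rightarrow(1)$ is exactly the paper's argument (Theorem 5.16 of \cite{Hi}, then $v_2(M)=0$ forces the $RP^2$-bundle over $F$ to be trivial). The genuine gap is the step you chose to close the cycle with, $(3)\Rightarrow(2)$: what you offer there is a strategy, not a proof. The claim that a section of the $F$-bundle over $RP^2$ ``embeds with trivial normal data'' is unjustified -- the normal bundle of a section is the restriction of the vertical tangent bundle, and there is no a priori reason for this $2$-plane bundle over $RP^2$ to be trivial; establishing that is essentially equivalent to showing the bundle is a product, i.e.\ to statement (1) itself, so your plan is circular at its crucial point. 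The paper avoids $v_2$ entirely by proving $(3)\Rightarrow(1)$ directly, and this is the idea you are missing: from (3) one gets two maps, the bundle projection $p:M\to{RP^2}$ (composed with the homotopy equivalence $M\simeq{E}$) and the classifying map $q:M\to{F}=K(\pi_1(F),1)$ of the projection of $\pi\cong{Z/2Z}\times\pi_1(F)$ onto its second factor. Since the fibre $F$ is aspherical, $\pi_2(p)$ is surjective, so $(p,q):M\to{RP^2}\times{F}$ induces isomorphisms on $\pi_1$ and $\pi_2$; as both universal covers are homotopy equivalent to $S^2$, the map $(p,q)$ is a homotopy equivalence. No Wu class or normal bundle computation is needed.

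There is also a concrete error in your $(1)\Rightarrow(2)$: the product formula for Wu classes omits the cross term. Since $Sq(v)=w$ and $w$ is multiplicative for products, the total Wu class is multiplicative, so
\[v_2(RP^2\times{F})=v_2(RP^2)\times1+v_1(RP^2)\times{v_1(F)}+1\times{v_2(F)}
=u\times{w_1(F)},\]
where $u$ generates $H^1(RP^2;\mathbb{F}_2)$. This vanishes if and only if $F$ is orientable (equivalently, take $a\in{H^1(F;\mathbb{F}_2)}$ with $w_1(F)\cup{a}\not=0$; then $(u\times{a})^2=u^2\times{a^2}\not=0$). So your verification of (2) from (1) is only valid for orientable $F$, and the same cross term shows that any route to (3) passing through the Wu class -- in particular your $(3)\Rightarrow(2)$ -- cannot succeed as stated when $F$ is non-orientable, since even the product $RP^2\times{F}$ then has $v_2\not=0$. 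This is a further reason the paper's direct argument for $(3)\Rightarrow(1)$, which never mentions $v_2$, is the right mechanism here.
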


\begin{proof}
Clearly $(1)\Rightarrow(2)$ and (3).
If (2) holds then $M$ is homotopy equivalent to the total space of an
$RP^2$-bundle over $F$, by Theorem 5.16 of \cite{Hi}.
This bundle must be trivial since $v_2(M)=0$.
If (3) holds then there are maps $q:M\to{F}$ and $p:M\to{RP^2}$
such that $\pi_1(p)$ and $\pi_1(q)$ are the projections of $\pi$ onto its
factors and $\pi_2(p)$ is surjective.
The map $(p,q):M\to{RP^2}\times{F}$ is then a homotopy equivalence.
\end{proof}

The implication $(3)\Rightarrow(1)$ fails if $F=RP^2$ or $S^2$.

We may assume henceforth that $\pi$ is not a product.
The fixed points of an involution of an orientable surface must be all cone
points (if the involution is orientation-preserving)
or all on reflector curves (if the involution is orientation-reversing).

\begin{theorem}
A closed orientable $4$-manifold $M$ is homotopy equivalent to 
the total space of an $F$-bundle over $RP^2$ with a section
if and only if $\pi=\pi_1(M)$ has an element of order $2$,
$\pi_2(M)\cong{Z}$ and 
$\kappa=\mathrm{Ker}(u)\cong\pi_1(F)$, 
where $u$ is the natural action of $\pi$ on $\pi_2(M)$.
\end{theorem}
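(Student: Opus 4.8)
The plan is to prove the biconditional through a single organizing principle: for a bundle $p:M\to RP^2$ with aspherical fibre $F$, a section exists precisely when the group extension
\[1\to\pi_1(F)\to\pi\to\pi_1(RP^2)=Z/2Z\to1\]
splits, i.e. $\pi\cong\pi_1(F)\rtimes Z/2Z$. I would keep in force the standing assumption that $\pi$ is not a product $\kappa\times Z/2Z$, since that is what makes Corollary B of Theorem 12 available; the product case is exactly what Theorem 16 handles separately.

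For necessity, suppose $p:M\to RP^2$ is such a bundle with section $s$. First I would run the homotopy exact sequence of the fibration. As $F$ is aspherical, $\pi_j(F)=0$ for $j\geq2$, and $s_*$ splits $p_*$ on both $\pi_2$ and $\pi_1$; hence the connecting map $\pi_2(RP^2)\to\pi_1(F)$ vanishes and $p_*:\pi_2(M)\to\pi_2(RP^2)\cong Z$ is an isomorphism. This yields $\pi_2(M)\cong Z$, while the splitting gives $\pi\cong\pi_1(F)\rtimes Z/2Z$, whose factor $s_*(Z/2Z)$ supplies an element of order $2$. To identify $\kappa$, I would observe that $p_*$ on $\pi_2$ is $\pi$-equivariant, where $\pi$ acts on $\pi_2(RP^2)=\pi_2(S^2)$ through $p_*:\pi\to\pi_1(RP^2)$ and the deck action of $Z/2Z$, which is by $-1$ since the antipodal map of $S^2$ has degree $-1$. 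Thus $u$ is the composite of $p_*$ with the nontrivial homomorphism $Z/2Z\to\mathbb{Z}^\times$, so $\mathrm{Ker}(u)=\mathrm{Ker}(p_*)=\pi_1(F)$, giving $\kappa\cong\pi_1(F)$.

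For sufficiency, assume $M$ is orientable with $\pi_2(M)\cong Z$, an element of order $2$ in $\pi$, and $\kappa=\mathrm{Ker}(u)\cong\pi_1(F)$. Since $\widetilde{M}\simeq S^2$, the group $\kappa$ is a torsion-free $PD_2$-group, so the order-$2$ element lies outside $\kappa$; hence $u$ is nontrivial and $\pi\cong\kappa\rtimes Z/2Z$. As $M$ is orientable and $\pi$ has torsion, Corollary B of Theorem 12 gives $M\simeq M_{st}$, and $M_{st}=S^2\times K(\kappa,1)/(s,k)\sim(-s,\zeta(k))$ is, by construction, the total space of a bundle over $RP^2=S^2/\pm$ with fibre $K(\kappa,1)$, which is homeomorphic to $F$ because aspherical surfaces are determined by $\pi_1$. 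It then remains to produce a section. Since $M_{st}$ is geometric, the order-$2$ element acts on $S^2\times X$ (with $X=\mathbb{E}^2$ or $\mathbb{H}^2$) as $(b,\tilde\zeta)$ with $b$ an orientation-reversing isometry of $S^2$ and $\tilde\zeta$ an order-$2$ isometry of $X$; such $\tilde\zeta$ fixes a point $x_0$, and freeness of the action forces $b$ to be fixed-point-free, hence the antipodal map. Then $s\mapsto[s,x_0]$ descends over $S^2/\pm$, because $[s,x_0]=[b(s),\tilde\zeta(x_0)]=[-s,x_0]$, and defines the desired section of $M_{st}\to RP^2$.

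The hard part will be this last step of the converse: converting the purely algebraic splitting $\pi\cong\kappa\rtimes Z/2Z$ into a genuine cross-section. Everything else is a diagram chase with the exact sequence of the fibration, but here one must invoke geometry — that a finite-order isometry of $\mathbb{E}^2$ or $\mathbb{H}^2$ has a fixed point and that the deck action on $S^2\times X$ is free — to realize the order-$2$ element as a section rather than merely a retraction of groups. I would also take care to flag where the standing non-product hypothesis is used, since Corollary B of Theorem 12 rests on the $S^2$-orbifold bundle structure, which is unavailable when $\pi\cong\kappa\times Z/2Z$.
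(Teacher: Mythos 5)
Your proof is correct and follows essentially the same route as the paper: the paper's sufficiency argument likewise reduces to $M\simeq{M_{st}}$ using orientability (so $w=0$, forcing a reflector curve by Lemma 2, whence uniqueness of the homotopy type by Theorem 12 --- exactly the content of Corollary B that you invoke), and then cites the bundle-with-section structure of $M_{st}$. The only difference is one of detail: you spell out the necessity direction (homotopy exact sequence plus equivariance of $p_*$ on $\pi_2$) and the fixed-point construction of the section over $RP^2$, steps the paper dismisses as ``clearly necessary'' or simply asserts.
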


\begin{proof}
The conditions are clearly necessary.
If they hold, then $M$ is homotopy equivalent to an $S^2$-orbifold bundle space 
(since it is not homotopy equivalent to an $RP^2$-bundle space).
The base orbifold must have a reflector curve, 
by Lemma 2.
Therefore $M\simeq{M_{st}}$, 
which is the total space of an $F$-bundle over $RP^2$ with a section.
\end{proof}

Orientability is used here mainly to ensure that 
the base orbifold has a reflector curve.

When $\pi$ is torsion-free $M$ is homotopy equivalent to 
the total space of an $S^2$-bundle over a surface $B$, 
with $\pi=\pi_1(B)$ acting nontrivially on the fibre.
Inspection of the geometric models for such bundle spaces 
shows that if also $v_2(M)\not=0$ then the bundle space fibres over $RP^2$. 
(See Theorems 10.8 and 10.9 of \cite{Hi}.)
Is the condition $v_2(M)\not=0$ necessary?

The standard $\mathbb{S}^2\times\mathbb{E}^2$-manifold with group $Z*_ZD$
fibres over $RP^2$, with fibre $Kb$.
Does the other example (constructed using $\theta=-1$)
also fibre over $RP^2$?

%\newpage

\end{document}